\newtheorem{thm}{Theorem}[section]
\newtheorem{lem}[thm]{Lemma}
\newtheorem{cor}[thm]{Corollary}
\numberwithin{equation}{section}
\newcommand{\dis}{\displaystyle}
\begin{document}

\title[Boltzmann equation in velocity-weighted Chemin-Lerner spaces]{Solution to the Boltzmann equation in velocity-weighted Chemin-Lerner type spaces}

\author{Renjun Duan}
\address{Department of Mathematics, The Chinese University of Hong Kong, Shatin, Hong Kong}
\email{rjduan@math.cuhk.edu.hk}

\author{Shota Sakamoto}
\address{Graduate School of Human and Environmental Studies,
Kyoto University,
Kyoto, 606-8501, Japan}
\email{sakamoto.shota.76r@st.kyoto-u.ac.jp}

\subjclass[2010]{35Q20, 76P05, 82B40}

\keywords{Boltzmann equation, global solution, perturbation, Besov space.}

\date{\today}

\begin{abstract}
In this paper we study the Boltzmann equation near global Maxwellians in the $d$-dimensional whole space. A unique global-in-time mild solution to the Cauchy problem of the equation is established in a Chemin-Lerner type space with respect to the phase variable $(x,v)$. Both hard and soft potentials with angular cutoff are considered. The new function space for global well-posedness is introduced to essentially treat the case of soft potentials, and the key point is that the velocity variable is taken in the weighted  supremum norm, and the space variable is in the $s$-order Besov space with $s\geq d/2$ including the spatially critical regularity. The proof is based on the time-decay properties of solutions to the linearized equation together with the bootstrap argument. Particularly, the linear analysis in case of hard potentials is due to the semigroup theory, where the extra time-decay plays a role in coping with initial data in $L^2$ with respect to the space variable.  In case of soft potentials, for the time-decay of linear equations we borrow the results based on the pure energy method and further extend them to those in $L^\infty$ framework through the technique of $L^2$--$L^\infty$ interplay. In contrast to hard potentials, $L^1$ integrability in $x$ of initial data is necessary for soft potentials in order to obtain global solutions to the nonlinear Cauchy problem. 
\end{abstract}

\maketitle

\setcounter{tocdepth}{1}
\tableofcontents
\thispagestyle{empty}

\section{Introduction}

\subsection{Setting of problem} 
We consider the following Cauchy problem on the Boltzmann equation 
\begin{equation}\label{eqn: BE}
\left\{\begin{aligned}
\dis\partial_t F (t,x,v)+v\cdot\nabla_x F (t,x,v) &=Q(F,F)(t,x,v),\\
\dis F(0,x,v)&=F_0(x,v),
\end{aligned}\right.
\end{equation}
with $(t,x,v)\in \mathbb{R}_+\times \mathbb{R}^d\times\mathbb{R}^d$,
where the bilinear collisional operator $Q$ is defined by
\begin{align*}
Q(F,G)(v)=\int_{\mathbb{R}^d}\int_{\mathbb{S}^{d-1}} B(v-v_*,\omega) (F_*'G'-F_*G)\,d\omega dv_*.
\end{align*}
Here we have used the conventional abbreviation $F'_*=F(v'_*)$, $G'=G(v')$, $F_*=F(v_*)$, and $G=G(v)$, where the pre-collisional velocities $(v, v_*)$ and the post-collisional ones $(v', v'_*)$ satisfy
\begin{align*}
v'=v-(v-v_*)\cdot \omega\omega,\quad 
v'_*=v_*+(v-v_*)\cdot \omega\omega
\end{align*}
for $\omega \in \mathbb{S}^{d-1}$, according to
the conservation laws of momentum and energy
\begin{align}\label{eqn: Conservation Laws}
v+v_*=v'+v'_*,\quad 
\vert v\vert^2+\vert v_*\vert^2=\vert v'\vert^2+\vert v'_*\vert^2.
\end{align}
We assume that the collision kernel $B$ 
takes the form of
\begin{align*}
B(v-v_*, \omega)=  \vert v-v_*\vert^\gamma b_0(\cos\theta),
\end{align*}
where $-d<\gamma\le 1$, and $\theta\in [0,\pi]$ is defined through the formula
$\cos\theta=\omega \cdot (v-v_*)/ \vert v-v_*\vert$.
We write $b_0(\theta)$ as a shorthand for $b_0(\cos\theta)$, and assume that $b_0(\theta)$ is nonnegative and satisfies
$$
0\leq b_0(\theta)\leq C |\cos\theta|,
$$ 
with a generic constant $C>0$. Then the classical Grad's angular cutoff assumption is satisfied under the above condition.
For brevity we call hard potentials for $0\le \gamma\le 1$, and soft potentials for $-d<\gamma<0$.

The goal of this paper is to look for solutions to the Cauchy problem \eqref{eqn: BE} near the equilibrium. Note that the normalized global Maxwellian
\begin{align*}
M=M(v):=\frac{1}{(2\pi)^{d/2}} 
e^{-\vert v\vert^2/2}
\end{align*}
is a steady solution to \eqref{eqn: BE} due to \eqref{eqn: Conservation Laws}. Therefore, we consider the perturbation $f=f(t,x,v)$ by 
$F:=M+M^{1/2}f$,
and reformulate the Cauchy problem \eqref{eqn: BE} as  
\begin{align}\label{eqn: BE near M}
\begin{cases}
\partial_t f +v\cdot \nabla_x f +Lf=\Gamma(f,f),\\
f(0,x,v)=f_0(x,v):=M^{-1/2}[F_0(x,v)-M].
\end{cases}
\end{align}
Here $L$ and $\Gamma$ are the linear and nonlinear parts of $Q$, respectively, defined by
\begin{align*}
Lf=-M^{-1/2} [Q(M, M^{1/2} f)+Q(M^{1/2}f, M)],\quad \Gamma(f,g)=M^{-1/2}Q(M^{1/2}f, M^{1/2} g).
\end{align*}
Under the angular cutoff assumption, it is well known that $L$ can be decomposed as 
$L=\nu - K$.
Here $\nu=\nu(v)$ is a velocity multiplication operator 
satisfying 
\begin{align*}
 \nu_0 (1+\vert v\vert^2)^{\gamma/2}\le \nu(v)\le \nu_1 (1+\vert v\vert^2)^{\gamma/2}
\end{align*}
for all $v\in \mathbb{R}^d$, where $0<\nu_0 \le \nu_1$ are constants independent of $v$. Especially, $\nu$ has a strictly positive lower bound for $0\le \gamma\le 1$, but it is not the case when $\gamma$ is negative. We remark that this fact is often the reason why one may need to consider two cases separately. Also, $K$ is an integral operator in the form of
\begin{align*}
Kf(v)=\int_{\mathbb{R}^d} k(v,\eta) f(\eta)\,d\eta
\end{align*}
for a real-valued symmetric function $k(\cdot,\cdot)$.

\subsection{Main results}
We shall state the main theorems of this paper. In order to do so, we first clarify in what sense $f(t,x,v)$ is 
a solution to the Cauchy problem \eqref{eqn: BE near M}. In fact, the mild solution $f(t,x,v)$ to \eqref{eqn: BE near M} is defined as the following integral form:
\begin{align*}
f(t,x,v) = &e^{-\nu(v)t}f_0(x-vt,v)+\int^t_0 e^{-\nu(v)(t-s)} (Kf) (s,x-(t-s)v, v)\,ds\\
&+\int^t_0 e^{-\nu(v)(t-s)} \Gamma(f,f) (s,x-(t-s)v, v)\,ds,
\end{align*}
for $t\ge 0$, $x$, $v\in \mathbb{R}^3$.
In what follows, for a Banach space $X$ and a nonnegative constant $\alpha\geq 0$ we define 
\begin{align}
\label{def.Xnorm}
|\!|\!| f |\!|\!|_{\alpha, X}=\sup_{t\ge 0} (1+t)^\alpha \Vert f(t)\Vert_X,
\end{align}
for a $X$-valued function $f(t)$ on the real half line $0\leq t<\infty$, and for any Banach spaces $X$ and $Y$, the norm $\Vert \cdot \Vert_{X\cap Y}$ means $\Vert\cdot\Vert_X+\Vert\cdot\Vert_Y$. For more notations of function spaces, especially Besov and Chemin-Lerner type spaces, readers may refer to 
the next preliminary section. 

For the hard potential case, the main result is stated as follows. 

\begin{thm}\label{thm: Solution for the hard potential case}
Assume $d\ge 1$, $0\le \gamma\le 1$, $q \in [1, \min(d,2)]$, $s\ge d/2$, and $\beta >\gamma+d/2$. Then there exist positive constants $\varepsilon>0$ and $C>0$ such that if initial data $f_0$ satisfies
\begin{align*}
\Vert f_0\Vert_{\tilde{L}^\infty_\beta (B^s_{2,1})\cap L^2_v L^q_x} \le \varepsilon,
\end{align*}
then the Cauchy problem  \eqref{eqn: BE near M}  admits a unique global mild solution $f(t,x,v) \in L^\infty(0,\infty; \tilde{L}^\infty_\beta (B^s_{2,1}))$ satisfying
\begin{align*}
|\!|\!| f |\!|\!|_{\alpha, \tilde{L}^\infty_\beta (B^s_{2,1})} \le C \Vert f_0\Vert_{\tilde{L}^\infty_\beta (B^s_{2,1})\cap L^2_v L^q_x},
\end{align*}
where $\alpha=d/2(1/q-1/2)$.
\end{thm}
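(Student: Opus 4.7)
The plan is to solve the Cauchy problem \eqref{eqn: BE near M} by a contraction-mapping argument applied to the mild formulation. Write $U(t):=e^{-t(v\cdot\nabla_x+L)}$ for the linearized Boltzmann semigroup and set $X:=\tilde{L}^\infty_\beta(B^s_{2,1})$, $Y:=L^2_vL^q_x$. Then recast the problem as finding a fixed point of
$$\mathcal{N}[f](t):=U(t)f_0+\int_0^t U(t-s)\,\Gamma(f,f)(s)\,ds$$
in the ball $\mathcal{B}_\delta:=\{f:|\!|\!| f|\!|\!|_{\alpha,X}\le\delta\}$ with $\delta\sim\|f_0\|_{X\cap Y}$. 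The decay exponent $\alpha=(d/2)(1/q-1/2)$ is precisely the $L^q_x\to L^2_x$ heat-type rate of the macroscopic part of $U(t)$.

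\textbf{Linear time-decay estimate.} The first key step is to prove
$$\|U(t)f_0\|_X\lesssim(1+t)^{-\alpha}\|f_0\|_{X\cap Y}.$$
For hard potentials $\nu(v)\ge\nu_0>0$, so the factor $e^{-\nu(v)t}$ in the integral representation is uniformly exponentially decaying. The polynomial rate $(1+t)^{-\alpha}$ comes from spectral analysis of $v\cdot\nabla_x+L$ on the Fourier side in $x$, yielding $(1+t)^{-\alpha}$ decay in $L^2_{x,v}$ for data in $Y$. To transfer this bound into the weighted Chemin-Lerner norm $X$, I would iterate the identity $U(t)=e^{-\nu(v)t}+\int_0^t e^{-\nu(v)(t-s)}K\,U(s)\,ds$ dyadic block by dyadic block in $x$: estimate each Littlewood-Paley block in $L^2_x$ via the $L^2_{x,v}$ decay, trade $L^2_x$ for $L^\infty_x$ by Bernstein, and absorb the velocity weight $(1+|v|)^\beta$ using the pointwise decay of the kernel $k(v,\eta)$ of $K$ in both variables.

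\textbf{Bilinear estimate.} The second key step is the Grad-type nonlinear bound
$$\|\Gamma(f,g)\|_X\lesssim\|f\|_X\|g\|_X.$$
This rests on two facts: first, $s\ge d/2$ ensures that $B^s_{2,1}$ is a Banach algebra in $x$, so Bony's paraproduct decomposition yields the product estimate block by block and preserves the Chemin-Lerner $\ell^1$-structure; second, the classical pointwise inequality $|\Gamma(f,g)(v)|\lesssim\nu(v)(1+|v|)^{-\beta}\|(1+|\cdot|)^\beta f\|_{L^\infty_v}\|(1+|\cdot|)^\beta g\|_{L^\infty_v}$ closes the velocity-weighted supremum exactly when $\beta>\gamma+d/2$: the $\gamma$ absorbs $|v-v_*|^\gamma$ in the kernel and the $d/2$ comes from integrating against the Maxwellian tail in $v_*$.

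\textbf{Closing the argument and main obstacle.} Inserting the two estimates into $\mathcal{N}$ and using that $\Gamma(f,g)$ lies in the orthogonal complement of $\ker L$ (so that the slow macroscopic component of $U(t)$ is absent and the Duhamel integrand enjoys an extra $e^{-\sigma(t-s)}$ spectral-gap gain, or at least faster polynomial decay), the time integral $\int_0^t\|U(t-s)\Gamma(f,f)(s)\|_X\,ds$ is bounded by $C\delta^2(1+t)^{-\alpha}$ without any constraint of the form $2\alpha>1$. This gives
$$(1+t)^\alpha\|\mathcal{N}[f](t)\|_X\lesssim\|f_0\|_{X\cap Y}+C\delta^2,$$
and the analogous difference estimate on $\mathcal{N}[f]-\mathcal{N}[g]$ yields contraction, hence a unique fixed point in $\mathcal{B}_\delta$. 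The main obstacle I expect is the linear decay in the second step: transferring the polynomial $L^2_{x,v}$ rate into the weighted $L^\infty_v$ Chemin-Lerner norm while preserving $\ell^1$-summability over dyadic blocks requires delicate interplay between the compactness of $K$, Bernstein inequalities in $x$, and the $\nu$-exponential gain in $v$ characteristic of the hard-potential regime; everything else is essentially dictated by algebra properties of $B^s_{2,1}$ and Grad's pointwise estimates.
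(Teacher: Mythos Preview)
Your overall strategy matches the paper's: contraction on the mild form, linear decay first in $\tilde{L}^2_v(B^s_{2,1})$ by spectral analysis, then bootstrapped to $X=\tilde{L}^\infty_\beta(B^s_{2,1})$ via the Duhamel identity $e^{tB}=e^{tA}+\int_0^t e^{(t-s)A}Ke^{sB}\,ds$, bilinear control of $\Gamma$ through Bony's decomposition, and the extra half-power decay on $\mathrm{Ran}(I-P_0)$ to close the time integral.

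Two concrete points need correction before this can be carried out. First, the bootstrap from the $L^2$-level decay to the $X$-norm is in the \emph{velocity} variable, not in $x$: the spatial norm in $\tilde{L}^\infty_\beta(B^s_{2,1})$ is still $L^2_x$, so there is nothing to trade by Bernstein. The actual mechanism is the velocity-smoothing $K\in\mathscr{B}(L^2_v,L^\infty_{v,0})\cap\mathscr{B}(L^\infty_{v,\beta},L^\infty_{v,\beta+1})$, applied inside the Duhamel identity block by block; this is exactly what the paper does in Lemma~\ref{lem:Time-dependent estimate of semigroup}. Second, for $\gamma>0$ your stated bilinear estimate $\|\Gamma(f,g)\|_X\lesssim\|f\|_X\|g\|_X$ is \emph{false}, since your own pointwise bound carries a factor $\nu(v)\sim\langle v\rangle^\gamma$ which is unbounded. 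The correct statement is $\|\nu^{-1}\Gamma(f,g)\|_X\lesssim\|f\|_X\|g\|_X$, and the paper absorbs the leftover $\nu$ by writing the Duhamel term as $\int_0^t e^{(t-s)B}(I-P_0)\,\nu\cdot\bigl(\nu^{-1}\Gamma(f,f)\bigr)(s)\,ds$ and using $\sup_v\int_0^t \nu(v)e^{-\nu(v)(t-s)}(1+s)^{-\tilde\alpha}\,ds\le C(1+t)^{-\tilde\alpha}$ (Lemma~\ref{lem:Estimate of Psi}). With these two fixes your outline coincides with the paper's argument.
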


For the soft potential case, we also have a similar result stated in the following

\begin{thm}\label{thm: Solution for the soft potential case}
Assume $d\ge 3$,  $-d<\gamma<0$, 
$s\ge d/2$, $\sigma=d\vert \gamma\vert/4$, 
and $\beta>\sigma_+ +d/2$, where $\sigma_+$ denotes $\sigma+\delta$ for an arbitrary small constant $\delta>0$. 
Then there exist positive constants $\varepsilon>0$ and $C>0$ such that if initial data $f_0$ satisfies
\begin{align*}
\Vert f_0\Vert_{\tilde{L}^\infty_{\beta+\sigma}(B^s_{2,1})\cap L^2_{\sigma_+} L^1_x} \le  \varepsilon ,
\end{align*}
then the Cauchy problem \eqref{eqn: BE near M} admits a unique global mild solution $f(t,x,v) \in L^\infty(0,\infty; \tilde{L}^\infty_\beta (B^s_{2,1}))$
satisfying 
\begin{align*}
|\!|\!| f|\!|\!|_{d/4, \tilde{L}^\infty_\beta (B^s_{2,1})} \le C \Vert f_0\Vert_{\tilde{L}^\infty_{\beta+\sigma}(B^s_{2,1})\cap L^2_{\sigma_+} L^1_x}.
\end{align*}
\end{thm}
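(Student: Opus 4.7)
The plan is to reproduce the perturbative scheme that handles the hard-potential case in Theorem~\ref{thm: Solution for the hard potential case}, but with the exponential semigroup decay replaced by the polynomial rate $(1+t)^{-d/4}$ that one expects for Cauchy data in $L^1_x$. I would work in the solution space
\begin{align*}
X = \bigl\{\, f \in L^\infty\bigl(0,\infty;\tilde{L}^\infty_\beta(B^s_{2,1})\bigr) : |\!|\!| f |\!|\!|_{d/4,\,\tilde{L}^\infty_\beta(B^s_{2,1})} < \infty \,\bigr\}
\end{align*}
and run a Picard/contraction iteration on the Duhamel integral form of \eqref{eqn: BE near M}. The smallness assumption on $f_0$ in $\tilde{L}^\infty_{\beta+\sigma}(B^s_{2,1})\cap L^2_{\sigma_+} L^1_x$ enters in two distinct ways: the weighted Besov component controls the high-velocity tail together with the spatial regularity needed for the Chemin-Lerner product estimates, whereas the $L^2_{\sigma_+} L^1_x$ component feeds the linear time-decay estimate through the pure $L^2$ energy method.

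The first step is to establish the linearized decay bound. Since $\nu(v)$ vanishes as $|v|\to\infty$ for $\gamma<0$, the free-transport factor $e^{-\nu(v)t}$ carries no uniform decay, so one must exploit the full operator $L=\nu-K$. I would borrow the $L^2$-based energy decay estimate from the existing literature as a black box, then perform the $L^2$--$L^\infty$ interplay announced in the abstract: iterating the Duhamel representation, the kernel estimate for $K$ converts the spatial $L^2$-decay into a weighted supremum bound in $v$, at the cost of a factor $\langle v\rangle^\sigma$ that is absorbed by starting from $\beta+\sigma$-weighted data. The target estimate is
\begin{align*}
\bigl\| U(t) f_0 \bigr\|_{\tilde{L}^\infty_\beta(B^s_{2,1})} \lesssim (1+t)^{-d/4}\, \|f_0\|_{\tilde{L}^\infty_{\beta+\sigma}(B^s_{2,1})\cap L^2_{\sigma_+} L^1_x},
\end{align*}
where $U(t)=e^{-(v\cdot\nabla_x+L)t}$ denotes the linearized semigroup.

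Next, for the quadratic term I would prove a bilinear estimate of the form
\begin{align*}
\bigl\| \nu^{-1}\,\Gamma(f,g) \bigr\|_{\tilde{L}^\infty_\beta(B^s_{2,1})} \lesssim \| f \|_{\tilde{L}^\infty_\beta(B^s_{2,1})}\, \| g \|_{\tilde{L}^\infty_\beta(B^s_{2,1})},
\end{align*}
using the algebra property of $B^s_{2,1}$ at $s\ge d/2$ together with the standard pointwise bound $|\Gamma(f,g)(v)|\lesssim \nu(v)\, \|f\|_{L^\infty_\beta}\|g\|_{L^\infty_\beta}$. Plugging the linear estimate and this bilinear estimate into the Duhamel formula reduces the fixed-point inequality to checking
\begin{align*}
\int_0^t (1+t-\tau)^{-d/4} (1+\tau)^{-d/2}\, d\tau \lesssim (1+t)^{-d/4},
\end{align*}
which is valid for $d\ge 3$ and closes the contraction in $X$, simultaneously producing existence, uniqueness and the claimed decay.

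The main obstacle will be the bookkeeping of weights. The $L^2$--$L^\infty$ interplay costs a weight $\sigma$; the pure $L^1_x$ energy decay requires the slightly stronger weight $\sigma_+$ to absorb the borderline exponent; and the nonlinear gain only supplies $\langle v\rangle^\gamma\sim \nu(v)$. One must therefore propagate several weighted norms in parallel and verify that the choice $\beta>\sigma_++d/2$ is simultaneously large enough for the kernel-integral convergences attached to $K$, for the Besov algebra property at $s\ge d/2$, and for the absorption of the $\sigma$-loss in the linear estimate, all without feeding the extra weight back into the iteration. Pinning down this hierarchy of weights so that both the weighted $\tilde{L}^\infty$-bound and the time-decaying bound close at once is the delicate part of the argument.
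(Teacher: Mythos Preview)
Your overall strategy---linear decay via the pure energy method in $\tilde L^2_v$, lifted to $\tilde L^\infty_\beta$ by the $L^2$--$L^\infty$ interplay, and then nonlinear closure through the time integral $\int_0^t(1+t-\tau)^{-d/4}(1+\tau)^{-d/2}\,d\tau\lesssim(1+t)^{-d/4}$ for $d\ge3$---is exactly the route the paper takes in Section~\ref{Ch: Soft} (Lemmas~\ref{lem: Time-decay of velocity-weighted square integrable space}--\ref{lem: Time-decay of velocity-weighted infty space}, Corollary~\ref{cor: Time-decay of the linearized solution for soft}, Theorem~\ref{thm: Time-Decay of the nonlinear solution for soft potential}). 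Two points of departure are worth flagging.

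First, the paper does \emph{not} close by a Picard contraction on the Duhamel form. It derives the a~priori bound of Theorem~\ref{thm: Time-Decay of the nonlinear solution for soft potential}/Corollary~\ref{cor: globa a priori estimate}, proves local existence separately in the Appendix via an iteration scheme with an auxiliary dissipation functional $\mathcal D_T$, and then concludes by the standard continuation argument. The paper explicitly contrasts this with the hard-potential contraction proof (fourth remark in Subsection~\ref{sub1.3}). That said, your fixed-point approach should also go through: once Theorem~\ref{thm: Time-Decay of the nonlinear solution for soft potential} is in hand, its quadratic inequality is precisely what a contraction on a small ball in $X$ requires, and the difference estimate follows from bilinearity of $\Gamma$. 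So this is a genuine but minor methodological divergence rather than a gap.

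Second, the single bilinear bound you write, $\|\nu^{-1}\Gamma(f,g)\|_{\tilde L^\infty_\beta(B^s_{2,1})}\lesssim\|f\|\,\|g\|$, is the hard-potential formulation and does not by itself feed the soft-potential linear estimate. Since there is no $(I-P_0)$ extra decay here (fifth remark in Subsection~\ref{sub1.3}), there is no $\Psi$-operator and no role for the $\nu^{-1}$ factor. Applying Corollary~\ref{cor: Time-decay of the linearized solution for soft} to $U(t-s)\Gamma(f,f)(s)$ demands that $\Gamma(f,f)(s)$ be controlled simultaneously in $\tilde L^\infty_{\beta}(B^s_{2,1})$, $\tilde L^2_{\sigma_+}(B^s_{2,1})$, and $L^2_{\sigma_+}L^1_x$; the paper proves three separate bilinear bounds \eqref{ineq: Weighted estimate of Gamma 1}--\eqref{ineq: Weighted estimate of Gamma 3} and checks that the condition $\beta>(1-d/4)\gamma+d/2$ makes all three close against $\|f(s)\|_{\tilde L^\infty_\beta(B^s_{2,1})}^2$. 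You are right that this weight bookkeeping is the delicate part, but the concrete hierarchy to verify is this trio, not the $\nu^{-1}$-weighted estimate you wrote down.
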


\subsection{Remarks on the results}\label{sub1.3}
Here we would make a few remarks on the main theorems stated above. First, the main motivation of this paper is to treat global well-posedness in spatially critical Besov-type spaces especially in the case when the intermolecular interaction potential is very soft, that is the case of $-3<\gamma<-2$ including the situation where $\gamma$ can be close to $-3$. Indeed, in \cite{MS} regarding the angular non-cutoff Boltzmann equation, the following condition 
$$
\gamma>\max\{-3,-3/2-2\textsf{s}\},\quad 0<\textsf{s}<1,
$$
is required to establish the global well-posedness, where $\textsf{s}$ is a singularity parameter of $b_0(\theta)$ near $\theta =\pi/2$. One may carry out the same proof for the angular cutoff case which formally corresponds to the limiting situation $\gamma>-3/2$ as $\textsf{s}\to 0$. Thus the existing approaches as in \cite{DLX} and \cite{MS} cannot be directly applied to treat the case of $-3<\gamma\leq -3/2$. The main reason for this obstacle is that the solution space $\tilde{L}^2_v (B^s_{2,1})$, particularly $\tilde{L}^2_v$ with respect to velocity variable $v$,  was used in those works, so that $\gamma$ cannot be below $-3/2$ in order for the nonlinear term $\Gamma(f,f)$ to be controlled by the product of two $\tilde{L}^2_v$ norms. To overcome the difficulty, we shall use the velocity-weighted function space $\tilde{L}^\infty_\beta (B^s_{2,1})$ in the $L^\infty$ setting. Indeed, by a suitable choice of $\beta$, $\tilde{L}^\infty_\beta (B^s_{2,1})$ is  an algebra so that the nonlinear term $\Gamma(f,f)$ can be defined.

Second, as pointed out in \cite{DLX},
the most interesting value of the index $s$ under consideration is $d/2$.
One may not expect to take this value if one seeks a solution in the usual Sobolev space $H^{d/2}$, since $H^{d/2}$ is not embedded into $L^\infty$ while the Besov space $B^{d/2}_{2,1}$ is. In such sense, the regularity index $s=d/2$ is said to be spatially critical. 
However, we remark that it is still a problem to justify any blow-up of solutions in the function space either $\tilde{L}^2_v(B^s_{2,1})$ or  $\tilde{L}^\infty_\beta (B^s_{2,1})$ with $s<d/2$.

Third, the Chemin-Lerner type space $\tilde{L}^\infty_\beta(B^s_{2,1})$ is endowed with a stronger topology than the formerly used velocity-weighted Sobolev space $L^\infty_\beta (H^s)$; see \cite{U} and \cite{UA}. Indeed, 
thanks to the equivalence of $H^s$ and $B^s_{2,2}$, 
one has 
\begin{align*}
\Vert f \Vert_{L^\infty_\beta (H^s)} &=\sup_v \langle v\rangle^\beta \Vert f(v)\Vert_{H^s}
\le C \sup_v\langle v\rangle^\beta \Vert f(v) \Vert_{B^s_{2,2}}\\
&\le C\sup_v\langle v\rangle^\beta \Vert f(v) \Vert_{B^s_{2,1}}
\le C\sum_{j\ge -1} 2^{js}\sup_v \langle v\rangle^\beta \Vert \Delta_j
f(v) \Vert_{L^2_x}=C\Vert f\Vert_{\tilde{L}^\infty_\beta (B^s_{2,1})}.
\end{align*} 
Moreover, as seen from the proof of two main theorems later on, we remark without any proof that under the same conditions, the global existence of solutions can also be obtained in terms of the following stronger norm
$$
\sum_{j\geq -1} 2^{js} \sup_{0\leq t\leq T}\sup_{v\in \mathbb{R}^3} \langle v \rangle^\beta (1+t)^\alpha \|\Delta_j f(t,\cdot,v)\|_{L^2_x},
$$
with an arbitrary $T>0$. Such norm is again of the Chemin-Lerner type.

Fourth, although both the solution spaces and decay rates of the solution are the same in Theorem \ref{thm: Solution for the hard potential case} and Theorem \ref{thm: Solution for the soft potential case}, strategies of the proofs are highly contrasting. Theorem \ref{thm: Solution for the hard potential case} is shown via a time-decay property of a semigroup and an induction scheme of inequalities, from which we deduce a contraction property of a solution map. Meanwhile, the proof of Theorem \ref{thm: Solution for the soft potential case} is based on a priori estimates by the energy method and the  continuation of a local solution. The semigroup argument works only when $-1<\gamma \le 1$ for $d=3$ (see \cite[Theorem 8.2]{UA}), and this requirement is truly essential. This is the reason why we employ the energy method for the soft potential case which is the main concern of the paper.
We remark that the energy method may also work for the hard potential case if $d\ge 3$, but the details are omitted for brevity of presentation. 

Fifth, the assumption $d\ge 3$ for the soft potential case in Theorem \ref{thm: Solution for the soft potential case}  is also essential. In fact, for soft potentials, one can obtain the usual decay estimate of the semigroup for the linearized problem by the energy method, but it seems impossible to deduce an extra decay of the semigroup acting on the pure non-fluid function 
(see the second estimate in Lemma \ref{lem:time-decay of semigroup}). Therefore it is necessary in the proof of Theorem \ref{thm: Solution for the soft potential case}  to require the condition $d/4+d/4>1$ for $q=1$, 
namely $d\ge 3$, in order to make use of the enough time-decay of solutions to close the nonlinear estimates. Note that the value of $q$ could be improved to be slightly greater than $1$, but it seems hopeless for us to treat $q=2$.  
Moreover, we remark in the hard potential case that if $q=2$ then $\tilde{L}^\infty_{\beta}(B^s_{2,1})\subset L^2_vL^q_x$ holds true thanks to $\beta>d/2$, and hence the global existence stated in Theorem \ref{thm: Solution for the hard potential case} can directly follow for initial data $f_0$ small enough in the function space $\tilde{L}^\infty_{\beta}(B^s_{2,1})$ without any other restriction in contrast to the case of $1\leq q<2$. 

\subsection{Known results}
Indeed, the perturbation theory of the Boltzmann equation near global Maxwellians has been well established so far. Interested readers may refer to \cite{DLX} as well as \cite{DW} for  an almost  complete list of references on the subject. In what follows we would only mention some research works most related to our current study in this paper. First of all, semigroup theory of the Boltzmann equation with cutoff was developed first in \cite{U2} for the whole range of hard potentials $0\leq \gamma\leq 1$ and then in \cite{UA} for the partial range of soft potentials $-1<\gamma<0$. It still remains open to extend those results to the case of $-3<\gamma\leq -1$, and we remark that if it could be achieved then one can remove the extra restriction on initial data in  Theorem \ref{thm: Solution for the soft potential case}. The function space  $L^\infty (0,\infty; \tilde{L}^\infty_\beta (B^s_{2,1}))$ or $\tilde{L}^\infty_T\tilde{L}^\infty_\beta (B^s_{2,1})$ with $s\geq d/2$ introduced in this paper can be regarded as a direct generalization of the known one $L^\infty (0,\infty; L^\infty_{\beta}H^s_x)$ with $s>d/2$. To deal with soft potentials, we have used some techniques from \cite{Caf1,Caf2,DHWY, DYZ,Guo-soft,Guo-bd}. Particularly, \cite{Guo-soft} introduced the decomposition of $K$ into a {\it compact} part $K^c$ and a {\it small} part $K^m$. The $L^2$--$L^\infty$ interplay approach was first introduced in \cite{Guo-bd} for treating the Boltzmann with hard potentials, see also \cite{UY} for a different view,  and it  has been extended in \cite{DHWY} to the soft potential case. In the current work we have made use of those works to additionally take into account the time-decay property as well as estimates in the Chemin-Lerner type space.

As we have seen, there have been many known results where the Sobolev space $H^s_x$ has been utilized, on the other hand, a successful application of the Besov space to the Cauchy problem is first achieved in \cite{DLX}. Under the cutoff assumption, the authors proved global existence of a unique global solution in the space
\begin{align*}
\tilde{L}^\infty \tilde{L}^2 (B^s_{2,1}) \left((0,\infty)_t\times  \mathbb{R}^3_v\times\mathbb{R}^3_x\right), \ s\ge 3/2
\end{align*}
for the hard potential case.
Following this result,  \cite{TL} considers the problem under the same conditions in the above space replacing $B^s_{2,1}$ by $B^s_{2,r}$ with $1\le r\le 2$ and $s>3/2$.
Also, it is proved in 
\cite{MS} that 
the Cauchy problem is well-posed in the same space for the Boltzmann equation without angular cutoff.
It should be also noted that the use of the Besov space in this paper is strongly motivated by \cite{DLX}, therefore, we here provide another aspect of applications of the Besov space to the problem. Finally, we refer readers to \cite{AM}  and \cite{SS} for applications of the Besov space to the kinetic theory from different perspectives.

\subsection{Organization and notation of the paper}
The rest of this paper is organized as follows. In Section \ref{Ch: Pre}, we will define function spaces to be used throughout the paper. Some lemmas from the preceding works will be also catalogued. In Sections \ref{Ch: Hard} and \ref{Ch: Soft}, we shall show  the global existence and uniqueness of solutions with explicit  time-decay rates for both the hard and soft potential cases, respectively. In Appendix, for completeness we will prove the local-in-time existence of solutions in the soft potential case.

Throughout the paper, $C$ denotes some generic positive (generally large) constant, and may take different values in different places.  

\section{Preliminaries}\label{Ch: Pre}
In this section, we define some function spaces for later use.
We also cite some lemmas on which one may rely as a basis of the proof in the following sections. The proofs of those lemmas will be omitted for brevity; the interested readers may refer to the original paper and references therein.

For $1\le p\le \infty$, $L^p=L^p(\mathbb{R}^d)$ is the usual $L^p$-space endowed with $\Vert \cdot \Vert_{L^p}$. In this paper, integration (or supremum if $p=\infty$) is always taken over $\mathbb{R}^d$ with respect to $x$ or $v$. Thus, whenever it is obvious, we omit $\mathbb{R}^d$ in what follows. For $1\le p$, $q\le \infty$, we define
\begin{align*}
L^p_v L^q_x= L^p \left(\mathbb{R}^d_v ; L^q(\mathbb{R}^d_x)\right),\quad L^q_x L^p_v= L^q \left(\mathbb{R}^d_x ; L^p(\mathbb{R}^d_v)\right). 
\end{align*}
A velocity-weighted $L^p$ space with a weight index $\beta \in\mathbb{R}$ is defined as
\begin{align*}
L^p_\beta =\left\{f=f(v)\left| \langle \cdot\rangle^\beta f \in L^p\right.\right\},\quad \Vert f\Vert_{L^p_\beta}:= \Vert \langle \cdot \rangle^\beta f\Vert_{L^p}
\end{align*}
where $\langle v\rangle = (1+\vert v\vert^2)^{1/2}$. We remark that the weighted norm is only used for the velocity variable, and further that we often use $L^p_0$ instead of $L^p$ when  the weight index need to be emphasized.

In order to define a Besov space, we first introduce the Littlewood-Paley decomposition, cf.~\cite{BCD}.
We define $A(r,R)$ for $0<r<R$ as an annulus bounded by  a larger circle of radius $R$ and a smaller one of radius $r$ both centred at the origin, and $B_R$ for $R>0$ as a ball with radius $R$ centred at the origin. There exist radial functions $\chi$ and $\phi$ satisfying the following properties:
\begin{align*}
\chi \in C^\infty_0 (B_{4/3}),\ \phi \in C^\infty_0(A(3/4, 8/3)),\ 0\le \chi, \phi\le 1,\\
\chi (\xi)+ \sum_{j\ge 0} \phi(2^{-j}\xi) =1,\quad \xi \in \mathbb{R}^d,\\
\sum_{j\in \mathbb{Z}} \phi(2^{-j}\xi) =1,\quad \xi \in \mathbb{R}^d\backslash \{0\},\\
\vert i-j\vert \ge 2\Rightarrow \mathrm{supp}\ \phi(2^{-i}\cdot)\cap \mathrm{supp}\ \phi(2^{-j}\cdot) =\emptyset,\\
j \ge 1\Rightarrow \mathrm{supp}\ \chi \cap \mathrm{supp}\ \phi(2^{-j}\cdot) =\emptyset.
\end{align*}
The set $\tilde{A}=B_{4/3}+A(3/4, 8/3)$ is an annulus, and one has
\begin{align}\label{Nearly Orthogonal}
\vert i-j\vert \ge 5\Rightarrow 2^i \tilde{A}\cap 2^j A(3/4, 8/3) =\emptyset.
\end{align}
For a tempered distribution $f\in \mathcal{S}'(\mathbb{R}^d)$, the inhomogeneous Littlewood-Paley decomposition operators $\{\Delta_j\}_{j=-1}^\infty$ are defined as 
\begin{align*}
\Delta_j f = \phi(D) f,\ j\ge 0; \quad \Delta_{-1}f=\chi(D) f.
\end{align*}
We also define the lower-frequency cutoff operator $S_j$ as
\begin{align*}
S_jf=\sum_{-1\le i\le j-1} \Delta_i f
\end{align*}
for later use.

Now we shall define the inhomogeneous Besov space. For $s\in \mathbb{R}$ and $p,q\in [1,\infty]$, a tempered distribution $f \in \mathcal{S}'(\mathbb{R}^d)$ belongs to $B^s_{p,q}=B^s_{p,q}(\mathbb{R}^d)$ if and only if
\begin{align*}
\Vert f\Vert_{B^s_{p,q}} = \Big( \sum_{j\ge -1} 2^{jsq} \Vert \Delta_j f \Vert_{L^p}^q\Big)^{1/q}<\infty,
\end{align*}
with the usual conventions for $p$ or $q=\infty$.
It is an important and useful fact that it holds that
\begin{align*}
\Vert f\Vert_{B^s_{p,q}}\le M\Leftrightarrow \Vert \Delta_j f\Vert_{L^p}\le M 2^{-js}c_j,\ \forall j
\end{align*}
for some $c_j\in \ell^r$ with $\Vert c_j \Vert_{\ell^r}\le 1$.
We only need the pair $(p,q)=(2,1)$ in this paper, so we employ these indices in the following definition.
To simplify the notation hereafter, we write
\begin{align*}
\overline{\sum}=\sum_{j\ge -1}2^{js},
\end{align*}
which corresponds to the case $q=1$.

We shall define a Chemin-Lerner space, which can be regarded as a velocity-weighted Besov space. The following spaces play an important role throughout the paper:
\begin{align*}
\tilde{L}^2_v(B^s_{2,1}) &=\left\{f \in \mathcal{S}'(\mathbb{R}^d_x\times \mathbb{R}^d_v)\ \left|\ \Vert f\Vert_{\tilde{L}^2_v(B^s_{2,1})}=  \overline{\sum} \Vert \Delta_j f\Vert_{L^2_vL^2_x}<\infty\right.\right\},\\
\tilde{L}^\infty_\beta (B^s_{2,1}) &= \left\{f \in \mathcal{S}'(\mathbb{R}^d_x\times \mathbb{R}^d_v)\ \left|\ \Vert f\Vert_{\tilde{L}^\infty_\beta (B^s_{2,1})} = \overline{\sum} \sup_{v} \langle v\rangle^\beta \Vert \Delta_j f(\cdot, v)\Vert_{L^2_x}<\infty\right.\right\}.
\end{align*}

Next, we first collect some lemmas in the hard potential case.
For Banach spaces $X$ and $Y$, $\mathscr{B}(X,Y)$ denotes a space of linear bounded operators from $X$ to $Y$. Also, we define $\mathscr{B}(X,X)=\mathscr{B}(X)$. The following two lemmas contain some well-known facts; see \cite[Section 4]{U}, for instance.
\begin{lem}\label{lem: Ukai Prop 4.3.1}
$K\in \mathscr{B}(L^2)\cap\mathscr{B}(L^2, L^\infty_0)\cap \mathscr{B}(L^\infty_{\beta}, L^\infty_{\beta+1})$, $\beta \in\mathbb{R}$.
\end{lem}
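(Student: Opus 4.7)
The plan is to reduce all three continuity statements to pointwise estimates on the kernel $k(v,\eta)$. Under Grad's angular cutoff assumption $0 \leq b_0(\theta) \leq C|\cos\theta|$ and the form $B = |v-v_*|^\gamma b_0(\cos\theta)$, the classical Grad decomposition expresses $Kf = K_2 f - K_1 f$ where each $K_i$ has an explicit symmetric kernel obtained from partially integrating out the $\omega$ and $v_*$ variables against Maxwellians. A direct (if tedious) computation then yields a pointwise bound of the form
\begin{equation*}
|k(v,\eta)| \leq C\, |v-\eta|^{-1}\, \langle v-\eta\rangle^{\gamma-1}\, \exp\!\left(-c|v-\eta|^2 - c\,\frac{(|v|^2-|\eta|^2)^2}{|v-\eta|^2}\right),
\end{equation*}
which is the engine for everything that follows. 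The main obstacle is really just carrying out this Grad-type kernel estimate carefully; once it is in hand, the three operator-norm bounds are essentially integral computations.

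Given the kernel estimate and the symmetry $k(v,\eta)=k(\eta,v)$, I would obtain $K \in \mathscr{B}(L^2)$ via Schur's test: the double bound $\sup_v \int |k(v,\eta)|\,d\eta < \infty$ (direct from the Gaussian decay in $|v-\eta|$) combined with symmetry yields boundedness on $L^2$. For $K \in \mathscr{B}(L^2, L^\infty_0)$ I would apply Cauchy--Schwarz,
\begin{equation*}
|Kf(v)|^2 \leq \left(\int |k(v,\eta)|^2\, d\eta\right) \|f\|_{L^2}^2,
\end{equation*}
and observe that $\int |k(v,\eta)|^2 d\eta$ is uniformly bounded in $v$, since the Gaussian in $|v-\eta|$ tames the integrable singularity of $|v-\eta|^{-2}$ at the origin.

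For the final statement $K \in \mathscr{B}(L^\infty_\beta, L^\infty_{\beta+1})$, I would estimate
\begin{equation*}
\langle v\rangle^{\beta+1}|Kf(v)| \leq \|f\|_{L^\infty_\beta} \int \langle v\rangle^{\beta+1} \langle\eta\rangle^{-\beta} |k(v,\eta)|\, d\eta,
\end{equation*}
and split the $\eta$-integration into the regions $\{|\eta| \leq |v|/2\}$ and $\{|\eta| \geq |v|/2\}$. On the first region $|v-\eta| \gtrsim \langle v\rangle$, so the Gaussian factor in $|v-\eta|$ yields decay that absorbs the polynomial growth $\langle v\rangle^{\beta+1}$ with room to spare. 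On the second region one uses $\langle v\rangle \leq 2\langle\eta\rangle$ together with the first Gaussian factor to produce one extra power of $\langle v\rangle^{-1}$; this is precisely where the key gain of the weight $+1$ comes from. Summing up gives the desired uniform bound, completing the proof.
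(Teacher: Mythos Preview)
The paper does not prove this lemma; it is stated as a well-known fact with a citation to Ukai's survey \cite{U}. Your sketch is precisely the standard argument found there and in Grad's original work: derive the pointwise kernel bound and read off the three operator bounds via Schur's test, Cauchy--Schwarz, and a weighted $L^1$ kernel estimate. So there is no divergence in approach to discuss.

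One small correction is worth flagging. In your treatment of $K \in \mathscr{B}(L^\infty_\beta, L^\infty_{\beta+1})$ on the region $\{|\eta| \geq |v|/2\}$, the extra factor $\langle v\rangle^{-1}$ does \emph{not} come from the Gaussian $e^{-c|v-\eta|^2}$ alone; integrating that against $|v-\eta|^{-1}$ over $\mathbb{R}^d$ gives only a constant, with no decay in $v$. The gain of one weight is produced by the \emph{second} exponential factor $\exp\bigl(-c\,(|v|^2-|\eta|^2)^2/|v-\eta|^2\bigr)$: after writing $\eta = v + \zeta$ and decomposing $\zeta$ into its components parallel and perpendicular to $v$, this factor confines the parallel component to an interval of length $O(1)$ for large $|v|$, which yields the classical bound $\int |k(v,\eta)|\,d\eta \leq C\langle v\rangle^{-1}$. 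With that adjustment your outline is correct; for $\beta<0$ you should also insert the elementary inequality $\langle\eta\rangle^{|\beta|} \leq C(\langle v\rangle^{|\beta|} + |v-\eta|^{|\beta|})$ to close the estimate, since the inequality $\langle v\rangle \leq 2\langle\eta\rangle$ alone no longer controls $\langle\eta\rangle^{-\beta}$.
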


\begin{lem}
Define the operators $A=-v\cdot\nabla_x -\nu(v)$ and $B=A+K$. Then $A$ and $B$ are generators of semigroups, with
\begin{align*}
D(B)=D(A)=\left\{ f\in L^2\ \left|\ v\cdot\nabla_x f,\ \nu f \in L^2\right.\right\}.
\end{align*}
\end{lem}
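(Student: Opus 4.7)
The plan is to build the semigroup for $A$ in explicit closed form and then obtain the semigroup for $B$ by treating $K$ as a bounded perturbation. The key observation is that $-v\cdot\nabla_x$ and the multiplication operator $-\nu(v)$ commute formally, so the natural candidate for $e^{tA}$ is
\[
(U(t)f)(x,v) = e^{-\nu(v)t}\, f(x-vt,v),
\]
namely the composition of the free-transport flow with the multiplication semigroup generated by $-\nu$.

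First I would verify that $\{U(t)\}_{t\ge 0}$ is a $C_0$-contraction semigroup on $L^2(\mathbb{R}^d_x\times\mathbb{R}^d_v)$. Contractivity follows at once from $\nu(v)\ge 0$ together with the fact that the translation $x\mapsto x-vt$ preserves $L^2_x$. The semigroup identity $U(t+s)=U(t)U(s)$ is a direct computation, and strong continuity at $t=0$ reduces to $L^2$-continuity of translation together with dominated convergence in the factor $e^{-\nu(v)t}$. Let $\tilde A$ denote the infinitesimal generator supplied by the Hille--Yosida theorem. On test functions $f\in C_c^\infty(\mathbb{R}^d_x\times\mathbb{R}^d_v)$ one differentiates $U(t)f$ at $t=0$ to obtain $\tilde A f = -v\cdot\nabla_x f - \nu f = Af$, which shows that $\tilde A$ extends $A$ on smooth functions.

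The main obstacle is the precise identification $D(\tilde A)=\{f\in L^2:v\cdot\nabla_x f,\ \nu f\in L^2\}$. For the inclusion $D(\tilde A)\subseteq\{\cdots\}$ I would work with Yosida-type regularizations and distributional testing to read off that $v\cdot\nabla_x f$ and $\nu f$ must separately lie in $L^2$ once their sum does, exploiting the fact that pointwise multiplication by the locally bounded $\nu$ is a closed operator which commutes with $v\cdot\nabla_x$ in the distributional sense. The reverse inclusion would be established by approximating any admissible $f$ by a joint $(x,v)$-mollification and cutoff sequence $f_n\in C_c^\infty$ with $f_n\to f$ and $Af_n\to Af$ in $L^2$; closedness of $\tilde A$ then gives $f\in D(\tilde A)$ with $\tilde A f=Af$.

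Finally, the statement for $B$ follows immediately from the preceding lemma, which asserts $K\in\mathscr{B}(L^2)$. The classical bounded-perturbation theorem for $C_0$-semigroups (for instance Pazy's theorem) then shows that $B=A+K$ generates a $C_0$-semigroup on $L^2$ with $D(B)=D(A)$, as claimed.
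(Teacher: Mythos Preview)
The paper does not actually prove this lemma: it is stated as one of two ``well-known facts'' with a pointer to \cite[Section 4]{U}, and no argument is supplied. Your outline is the standard route and is correct in structure. The explicit formula $U(t)f=e^{-\nu t}f(x-vt,v)$ does define a $C_0$-contraction semigroup on $L^2_{x,v}$, and the bounded-perturbation theorem applied with $K\in\mathscr{B}(L^2)$ (Lemma \ref{lem: Ukai Prop 4.3.1}) immediately gives the result for $B$ with the same domain.

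The only place where your write-up is soft is the separation of $v\cdot\nabla_x f$ and $\nu f$ in the domain identification. This can be made completely clean by passing to the Fourier transform in $x$: if $f\in D(\tilde A)$ then $(i v\cdot\xi+\nu(v))\hat f\in L^2_{\xi,v}$, and since $|i v\cdot\xi+\nu(v)|^2=(v\cdot\xi)^2+\nu(v)^2$ one reads off $\nu\hat f\in L^2$ and $(v\cdot\xi)\hat f\in L^2$ separately. This replaces the vaguer appeal to Yosida regularizations and closedness, and is exactly the mechanism behind the spectral picture used later in Lemma \ref{lem: Ukai Theorems 44.3.2--4.3.3}. With that adjustment your proof is complete; the paper simply defers to the literature rather than reproducing any of this.
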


Furthermore, we cite the following lemma from \cite[Theorems 4.3.2-4.3.3]{U}.
This is a basis of the arguments for the hard potential case. 
We define $B(\xi)= -(i\xi\cdot v+ \nu(v))+K$.
\begin{lem}\label{lem: Ukai Theorems 44.3.2--4.3.3}
Assume $d\ge 1$, $0\le \gamma\le 1$. There are constants $C>0$, $\kappa_0>0$, and $0<\sigma_0<\nu_0$ such that,
\begin{enumerate}
\item for any $\xi$ with $\vert \xi \vert\le \kappa_0$, one has
\begin{align*}
e^{tB(\xi)}=\sum_{i=0}^{d+1} e^{\mu_i (\vert \xi\vert)t} P_i(\xi)+U(t,\xi),\\
\Vert U(t,\xi)\Vert_{\mathscr{B}(L^2)} \le C e^{-\sigma_0 t}, \ t\ge 0.
\end{align*}
Here, the functions $\mu_i(\cdot)$ $(i=0,1,\cdots,d+1)$ are smooth and nonpositive over $[-\kappa_0, \kappa_0]$ and satisfy $\mathrm{Re}\, \mu_i (\vert\xi\vert)\le -a\vert\xi\vert^2$ for some constant  $a>0$ independent of $i$. Also, for each $i$ it holds that 
\begin{align*}
P_i(\xi)=P^{(0)}_i(\xi/\vert\xi\vert)+\vert\xi\vert P^{(1)}_i(\xi).
\end{align*}
Here $P^{(0)}_i$ $(i=0,1,\cdots,d+1)$ are orthogonal projections, and 
\begin{align*}
P_0=\sum_{i=0}^{d+1} P^{(0)}_i(\xi/\vert\xi\vert),
\end{align*}
is the orthogonal projection from $L^2_v$ onto the null space of $L$.
\item for any $\xi$ with $\vert \xi \vert \geq \kappa_0$, one has
\begin{align*}
\Vert e^{tB(\xi)} \Vert_{\mathscr{B}(L^2)} \le Ce^{-\sigma_0 t},\  t\ge 0.
\end{align*}
\end{enumerate}
\end{lem}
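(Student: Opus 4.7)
The plan is to study $B(\xi) = B_0(\xi) + K$, where $B_0(\xi) := -(i\xi\cdot v + \nu(v))$ is a multiplication operator on $L^2_v$, and to analyze the small-$|\xi|$ and large-$|\xi|$ regimes by two very different mechanisms: a spectral perturbation expansion for $|\xi|\le\kappa_0$, and a compactness-based resolvent bound for $|\xi|\ge \kappa_0$.

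For the low-frequency regime, I would anchor a perturbative expansion at $\xi = 0$, where $B(0) = -L$. The Fredholm theory of $L$ under cutoff hard potentials gives that the spectrum of $-L$ consists of the isolated eigenvalue $0$ of multiplicity $d+2$, corresponding to the collision invariants $1, v_1, \ldots, v_d, |v|^2$ spanning $\mathrm{Ker}\,L$, plus a remainder contained in $\{\mathrm{Re}\,z \le -\nu_0\}$. Since $\nu(v)$ has a strictly positive lower bound for hard potentials, the perturbation $i\xi\cdot v$ is $B(0)$-bounded with relative bound zero, so Kato--Rellich analytic perturbation theory applies. Hence exactly $d+2$ eigenvalues $\mu_i(|\xi|)$ bifurcate smoothly from $0$, with associated eigenprojections $P_i(\xi)$. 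A Rayleigh--Schr\"odinger expansion along $\omega = \xi/|\xi|$ gives
\begin{align*}
\mu_i(|\xi|) = -i|\xi|\,\lambda_i(\omega) + |\xi|^2\,\eta_i(\omega) + O(|\xi|^3),
\end{align*}
where the $\lambda_i(\omega)$ are the real eigenvalues of the finite-rank symmetric operator $P_0(v\cdot\omega)P_0$ on $\mathrm{Ker}\,L$ (yielding fluid-mode speeds) and the second-order corrections are the eigenvalues of $-P_0(v\cdot\omega)\,L^{-1}(I-P_0)(v\cdot\omega)P_0$. This would yield $\mathrm{Re}\,\mu_i(|\xi|) \le -a|\xi|^2$ with $a > 0$ independent of $i$ and $\omega$, after shrinking $\kappa_0$ as needed. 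Expanding the projections analogously produces $P_i(\xi) = P^{(0)}_i(\omega) + |\xi|\,P^{(1)}_i(\xi)$, and the completeness of the zero-eigenspace at $|\xi|=0$ forces $\sum_i P^{(0)}_i(\omega) = P_0$. The residual $U(t,\xi)$ is then defined through the Dunford integral of $e^{tB(\xi)}$ over a contour enclosing the remainder of the spectrum in $\{\mathrm{Re}\,z \le -\sigma_0\}$, giving $\|U(t,\xi)\|_{\mathscr{B}(L^2)} \le Ce^{-\sigma_0 t}$ by a standard semigroup bound.

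For the high-frequency regime $|\xi|\ge \kappa_0$, the plan is to derive uniform resolvent bounds and invoke Gearhart--Pr\"uss. Since $B_0(\xi)$ is a multiplication operator,
\begin{align*}
\|(\lambda - B_0(\xi))^{-1}\|_{\mathscr{B}(L^2)} \le \frac{1}{\mathrm{Re}\,\lambda + \nu_0}, \quad \mathrm{Re}\,\lambda > -\nu_0,
\end{align*}
uniformly in $\xi$. Using
\begin{align*}
(\lambda - B(\xi))^{-1} = (\lambda - B_0(\xi))^{-1}\bigl(I - K(\lambda - B_0(\xi))^{-1}\bigr)^{-1},
\end{align*}
I would show that on any strip $-\sigma_0 \le \mathrm{Re}\,\lambda$, $|\mathrm{Im}\,\lambda|\le R$ with $\sigma_0<\nu_0$, the operator $K(\lambda - B_0(\xi))^{-1}$ has norm strictly less than $1$ provided $|\xi|$ is sufficiently large. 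The key observation is that $K$ is compact on $L^2$ by Lemma \ref{lem: Ukai Prop 4.3.1}, while $(\lambda - B_0(\xi))^{-1}$ converges weakly to $0$ as $|\xi|\to\infty$ through the oscillatory factor $i\xi\cdot v$, so the composition tends to $0$ in norm. Combined with the trivial high-$|\mathrm{Im}\,\lambda|$ bound inherited from $B_0(\xi)$, this supplies a uniform resolvent bound on $\{\mathrm{Re}\,\lambda \ge -\sigma_0\}$, hence exponential decay of the semigroup with rate $\sigma_0$.

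The main obstacle is the low-frequency step, specifically verifying that the second-order real part $\mathrm{Re}\,\eta_i(\omega)$ is strictly negative and uniformly bounded away from zero in the direction $\omega\in \mathbb{S}^{d-1}$. This reduces to showing that $P_0(v\cdot\omega) L^{-1}(I-P_0)(v\cdot\omega)P_0$ is positive definite on the finite-dimensional space $\mathrm{Ker}\,L$, which uses the coercivity $\langle Lg, g\rangle \gtrsim \|(I-P_0)g\|_{L^2_\nu}^2$ on $(\mathrm{Ker}\,L)^\perp$ together with the fact that $(v\cdot\omega)P_0$ has no component in $\mathrm{Ker}\,L$ beyond the acoustic directions. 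A secondary technical issue is the uniformity of the norm convergence $\|K(\lambda - B_0(\xi))^{-1}\|_{\mathscr{B}(L^2)}\to 0$ on the resolvent contour for the large-$\xi$ argument, which needs the explicit integral-kernel representation of $K$ combined with a dominated convergence or stationary-phase estimate.
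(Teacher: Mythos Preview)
The paper does not prove this lemma: it is quoted verbatim from \cite[Theorems 4.3.2--4.3.3]{U}, so there is no in-paper argument to compare against. Your outline is essentially the classical Ellis--Pinsky/Ukai strategy that the cited reference carries out, so at the level of architecture you are on the right track.

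That said, there is a genuine technical slip in your low-frequency step. You write that ``$i\xi\cdot v$ is $B(0)$-bounded with relative bound zero'' because $\nu$ has a positive lower bound. This is false for $0\le\gamma<1$: since $D(B(0))=D(L)=D(\nu)=\{f:\langle v\rangle^\gamma f\in L^2\}$, there are $f\in D(B(0))$ with $vf\notin L^2$, so $i\xi\cdot v$ is not even defined on all of $D(B(0))$, let alone relatively bounded. Kato--Rellich is therefore not the right entry point. The correct mechanism, and the one actually used in \cite{U}, is to exploit the \emph{bounded} perturbation structure $B(\xi)=B_0(\xi)+K$ with $K$ compact on $L^2$: the eigenvalue equation $(\mu-B_0(\xi))\psi=K\psi$ becomes $\psi=(\mu-B_0(\xi))^{-1}K\psi$, and since $(\mu-B_0(\xi))^{-1}K$ is compact and depends analytically on $(\mu,\xi)$, analytic Fredholm theory (or a Lyapunov--Schmidt reduction onto $\mathrm{Ker}\,L$) produces the branches $\mu_i(|\xi|)$ and projections $P_i(\xi)$ with the stated expansions. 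Your identification of the second-order coefficient with the positive-definiteness of $P_0(v\cdot\omega)L^{-1}(I-P_0)(v\cdot\omega)P_0$ is correct once this reduction is in place.

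A smaller gap: your large-$|\xi|$ argument via $\|K(\lambda-B_0(\xi))^{-1}\|\to 0$ handles $|\xi|\ge R$ for some large $R$, but the statement requires the decay for all $|\xi|\ge\kappa_0$. The intermediate range $\kappa_0\le|\xi|\le R$ needs a separate (compactness-in-$\xi$) argument showing that no eigenvalue of $B(\xi)$ lies in $\{\mathrm{Re}\,\lambda\ge-\sigma_0\}$ there; this is standard but should be mentioned.
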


Regarding the time-decay property in the soft potential case, we cite the following lemma from \cite{DYZ} (see also \cite{St}) with a slight modification of notations. Note that compared to \cite{UA}, $\gamma$ can take the full range of values for soft potentials. 

\begin{lem}\label{lem: DYZ}
Let $d\ge 3$, $-3 <\gamma < 0$, and let $\ell \ge 0$, $J>0$ be given constants. Set $\mu=\mu(v):=\langle v\rangle^{-\gamma/2}$. There is a nonnegative time-frequency functional $\mathcal{E}_\ell(t,\xi)=\mathcal{E}_\ell (\hat{f}(t,\xi))$ with
\begin{align*}
\mathcal{E}_\ell(t,\xi)\sim \Vert \mu^\ell \hat{f}(t,\xi)\Vert_{L^2_v}^2
\end{align*}
such that the solution to the Cauchy problem on the linearized homogeneous equation
\begin{align}
\begin{cases}\label{eqn: Linearized Equation}
\partial_t f +v\cdot \nabla_x f +Lf=0,\\
f(0,x,v)=f_0(x,v)
\end{cases}
\end{align}
 satisfies
\begin{align*}
\mathcal{E}_\ell(t,\xi) \le C (1+\rho(\xi)t)^{-J} \mathcal{E}_{\ell +J_+}(0,\xi),
\end{align*}
for all $t\geq 0$ and $\xi\in\mathbb{R}^d$, where $\rho(\xi)=|\xi|^2/(1+|\xi|^2)$, and $C>0$ is a generic constant.
\end{lem}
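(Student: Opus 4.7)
The plan is to work after Fourier transform in $x$, where the linearized equation decouples for each $\xi\in\mathbb{R}^d$ into $\partial_t\hat f + i(\xi\cdot v)\hat f + L\hat f = 0$, and to construct a Lyapunov-type functional of the form
\begin{equation*}
\mathcal{E}_\ell(t,\xi) = \|\mu^\ell \hat f(t,\xi)\|_{L^2_v}^2 + \kappa\,\mathrm{Re}\,\mathcal{F}(t,\xi),
\end{equation*}
with a small parameter $\kappa>0$, where $\mathcal{F}$ is an interactive cross-term built from the fluid moments of $\hat f$ in the spirit of Kawashima's compensating function. The role of $\mathcal{F}$ is to supply dissipation of the macroscopic part $P\hat f$, which the coercivity of $L$ alone (acting only on $(I-P)\hat f$) cannot control.

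The first main step is to derive a differential inequality
\begin{equation*}
\partial_t \mathcal{E}_\ell(t,\xi) + c\,\rho(\xi)\,\mathcal{D}_\ell(t,\xi) \le 0, \qquad \mathcal{D}_\ell(t,\xi) \sim \int_{\mathbb{R}^d}\langle v\rangle^{(1-\ell)\gamma}|\hat f|^2\,dv + |P\hat f(t,\xi)|^2.
\end{equation*}
The microscopic contribution to $\mathcal{D}_\ell$ comes from the coercivity of $L$ on $(I-P)\hat f$: using $L=\nu-K$ and a smooth/compact splitting of $K$, the spectral gap transfers to the $\mu^\ell$-weighted norm, leaving the weight $\nu\mu^{2\ell}\sim\langle v\rangle^{(1-\ell)\gamma}$ on the microscopic part. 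The macroscopic contribution $\rho(\xi)|P\hat f|^2$ is extracted from $\partial_t\mathcal{F}$ via the macroscopic balance laws obeyed by the fluid moments; the factor $\rho(\xi)=|\xi|^2/(1+|\xi|^2)$, rather than just $|\xi|^2$, arises because $\mathcal{F}$ must be arranged to remain bounded by $\mathcal{E}_\ell$ uniformly in $\xi$.

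The second main step is the soft-potential interpolation. Since $\mu=\langle v\rangle^{-\gamma/2}$ is a growing weight ($\gamma<0$), H\"older's inequality yields, for any $J>0$,
\begin{equation*}
\int\langle v\rangle^{-\ell\gamma}|\hat f|^2\,dv \le \Big(\int\langle v\rangle^{(1-\ell)\gamma}|\hat f|^2\,dv\Big)^{J/(J+1)}\Big(\int\langle v\rangle^{-(\ell+J_+)\gamma}|\hat f|^2\,dv\Big)^{1/(J+1)},
\end{equation*}
which translates into $\mathcal{D}_\ell \gtrsim \mathcal{E}_\ell^{1+1/J}\mathcal{E}_{\ell+J_+}^{-1/J}$ (the extra $\delta>0$ in $J_+=J+\delta$ being needed to absorb technical errors in the weighted $K$-estimates). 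Since $t\mapsto \mathcal{E}_{\ell+J_+}(t,\xi)$ is nonincreasing by the same Lyapunov inequality applied at the higher level, one arrives at the scalar ODE
\begin{equation*}
\partial_t \mathcal{E}_\ell + c\,\rho(\xi)\,\mathcal{E}_{\ell+J_+}(0,\xi)^{-1/J}\,\mathcal{E}_\ell^{1+1/J} \le 0,
\end{equation*}
whose explicit integration delivers $\mathcal{E}_\ell(t,\xi)\le C(1+\rho(\xi)t)^{-J}\mathcal{E}_{\ell+J_+}(0,\xi)$, as claimed.

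The step I expect to be the hardest is the construction of $\mathcal{F}(t,\xi)$ and the verification that $\partial_t\mathcal{F}$ produces precisely $\rho(\xi)|P\hat f|^2$ on the dissipative side, with every cross term either absorbed by the weighted microscopic dissipation or controlled by the smallness of $\kappa$. This is delicate for soft potentials because the $L$-dissipation is $\nu$-weighted and thus degenerates at large $|v|$, so the weighted-energy bookkeeping must be coupled carefully with the compact/small splitting of $K$; it is also the point at which the unavoidable loss encoded by $J_+$ in place of $J$ enters the estimate.
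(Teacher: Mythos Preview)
Your outline is correct and follows precisely the strategy of the cited reference \cite{DYZ} (see also \cite{St}): a Kawashima-type time-frequency Lyapunov functional combining the weighted $L^2_v$ energy with an interactive macroscopic cross term, yielding $\partial_t\mathcal{E}_\ell + c\rho(\xi)\mathcal{D}_\ell\le 0$, followed by the H\"older interpolation between $\mathcal{E}_\ell$, $\mathcal{D}_\ell$, and $\mathcal{E}_{\ell+J_+}$ to produce the superlinear scalar inequality whose integration gives the $(1+\rho(\xi)t)^{-J}$ decay. Note, however, that the present paper does \emph{not} give its own proof of this lemma: it is simply quoted from \cite{DYZ} with adapted notation, so there is nothing in the paper itself to compare your argument against. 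Your sketch is faithful to the original source, including the identification of the two genuinely delicate points --- the construction of the compensating functional $\mathcal{F}$ that yields $\rho(\xi)|P\hat f|^2$ uniformly in $\xi$, and the weighted estimates on $K$ that force the technical loss $J_+$ in place of $J$.
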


The following lemma by \cite{AMUXYwholespaceI} is also useful for the estimates of the nonlinear term.
\begin{lem}{\cite[Lemma 2.5]{AMUXYwholespaceI}}\label{equiv: v_* Integral}
Let $\rho>0$, $\delta\in \mathbb{R}$. If $\alpha >-d$ and $\beta\in\mathbb{R}$, then one has 
\begin{align*}
\int_{\mathbb{R}^d} \vert v-v_*\vert^\alpha \langle v-v_*\rangle^\beta \langle v_*\rangle^\delta e^{-\rho \vert v_*\vert^2}dv_* \sim \langle v\rangle^{\alpha +\beta}.
\end{align*}
\end{lem}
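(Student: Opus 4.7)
The plan is to establish matching upper and lower bounds by decomposing the $v_*$-integral into regions determined by the size of $|v_*|$ relative to $|v|$, together with a secondary decomposition near the diagonal $v_*=v$. The Gaussian factor $e^{-\rho|v_*|^2}$ localizes $v_*$ near the origin, so when $|v|$ is large the integrand is effectively evaluated at $v_*$ far from $v$, which makes the weights $|v-v_*|^\alpha\langle v-v_*\rangle^\beta$ comparable to $\langle v\rangle^{\alpha+\beta}$. The case of bounded $v$ is handled separately and reduces to a finite integral in which the assumption $\alpha>-d$ provides the required local integrability of the singular factor.

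For the upper bound when $|v|\geq 2$, I split $\mathbb{R}^d=\Omega_1\cup\Omega_2$ with $\Omega_1=\{v_*:|v_*|\leq |v|/2\}$ and $\Omega_2=\{v_*:|v_*|>|v|/2\}$. On $\Omega_1$ the triangle inequality gives $|v|/2\leq |v-v_*|\leq 3|v|/2$, so both $|v-v_*|$ and $\langle v-v_*\rangle$ are comparable to $\langle v\rangle$, and the contribution from $\Omega_1$ is bounded by
\begin{align*}
C\langle v\rangle^{\alpha+\beta}\int_{\mathbb{R}^d}\langle v_*\rangle^\delta e^{-\rho|v_*|^2}\,dv_* \leq C'\langle v\rangle^{\alpha+\beta}.
\end{align*}
On $\Omega_2$ I extract a factor $e^{-\rho|v_*|^2/2}\leq e^{-\rho|v|^2/8}$, which absorbs every polynomial growth in $\langle v\rangle$; to handle the possible singularity of $|v-v_*|^\alpha$ I further split $\Omega_2$ into $\{|v-v_*|\leq 1\}$, where $\alpha>-d$ gives local integrability, and $\{|v-v_*|>1\}$, where $|v-v_*|^\alpha$ is comparable to $\langle v-v_*\rangle^\alpha$ and the remaining polynomial factors are controlled by powers of $\langle v\rangle\langle v_*\rangle$ that are dominated by the surviving Gaussian.

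For the lower bound when $|v|\geq 2$, I restrict the integral to the unit ball $\{v_*:|v_*|\leq 1\}$, on which $|v-v_*|\geq |v|-1$ is comparable to $\langle v\rangle$ and the factors $\langle v_*\rangle^\delta e^{-\rho|v_*|^2}$ are bounded below by a positive constant, yielding a contribution of size $c\langle v\rangle^{\alpha+\beta}$. When $|v|\leq 2$ the target quantity $\langle v\rangle^{\alpha+\beta}$ is of order one, and both bounds follow by direct estimation on a fixed compact neighborhood of $v$ together with a Gaussian tail estimate; the only subtlety is the local integrability near $v_*=v$, ensured again by $\alpha>-d$.

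The main technical point is the interplay between the local singularity $|v-v_*|^\alpha$ (when $\alpha<0$) and the polynomial weights of possibly negative sign: a single dyadic split in $|v_*|$ is not enough, and one must simultaneously isolate the regions where $|v_*|$ is comparable to $|v|$ and where $v_*$ is close to $v$. Once the three-region decomposition above is in place, each piece reduces either to a universal Gaussian moment $\int\langle v_*\rangle^\delta e^{-\rho|v_*|^2}\,dv_*$ or to an exponentially small remainder, and the equivalence $\sim\langle v\rangle^{\alpha+\beta}$ follows.
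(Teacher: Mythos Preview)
Your argument is correct and is essentially the standard proof of this estimate. Note, however, that the paper does not supply its own proof of this lemma: it is quoted verbatim from \cite{AMUXYwholespaceI} and used as a black box, so there is no proof in the paper to compare against. The decomposition you outline (split on $|v_*|\lessgtr |v|/2$, a secondary split on $|v-v_*|\lessgtr 1$ to isolate the local singularity, and use of the half of the Gaussian to absorb polynomial losses on the region $|v_*|>|v|/2$) is exactly the approach in the original reference, and your handling of the lower bound and of the bounded-$v$ case is correct as written.
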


\section{Solution for the hard potential case}\label{Ch: Hard}
The aim of this section is to find a solution for the hard potential case. We start from revisiting \cite[Theorem 4.4.4]{U} so that it fits to the problem under consideration. Recall that Lemma \ref{lem: Ukai Prop 4.3.1} and Lemma \ref{lem: Ukai Theorems 44.3.2--4.3.3} are valid because they are based on $L^2_v$-analysis, not depending on the difference of a Sobolev and a Besov norm with respect to $x$. Through this section,  
we set $d\ge 1$.

\begin{lem}\label{lem:time-decay of semigroup}
For $s\in \mathbb{R}$ and $q\in [1,2]$, there is a constant $C>0$ such that it holds that
\begin{align*}
&\Vert e^{tB}f \Vert_{\tilde{L}^2_v(B^s_{2,1})} \le C (1+t)^{-\alpha} \Vert f\Vert_{\tilde{L}^2_v(B^s_{2,1}) \cap L^2_vL^q_x},\\
&\Vert e^{tB}(I-P_0)f \Vert_{\tilde{L}^2_v(B^s_{2,1})} \le C (1+t)^{-\alpha-1/2} \Vert f\Vert_{\tilde{L}^2_v(B^s_{2,1}) \cap L^2_vL^q_x},
\end{align*}
for all $t\geq 0$, where $\alpha=(d/2)(1/q-1/2)$.
\end{lem}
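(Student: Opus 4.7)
The plan is to combine the Littlewood--Paley decomposition in $x$ with the spectral representation of the Fourier-multiplier semigroup $e^{tB(\xi)}$ provided by Lemma \ref{lem: Ukai Theorems 44.3.2--4.3.3}. Since $B$ is translation-invariant in $x$, every $\Delta_j$ commutes with $e^{tB}$, which reduces the problem to pointwise-in-$\xi$ estimates of $e^{tB(\xi)}$ on $L^2_v$. I would then split $f=f_L+f_H$ by a sharp Fourier cutoff at $|\xi|=\kappa_0$ and treat the two pieces using parts (1) and (2) of the lemma respectively. For $f_H$, the uniform bound $\|e^{tB(\xi)}\|_{\mathscr{B}(L^2_v)}\le Ce^{-\sigma_0 t}$ on $|\xi|>\kappa_0$ gives, block-by-block via Plancherel in $x$ together with $\|\Delta_j f_H\|_{L^2_vL^2_x}\le\|\Delta_j f\|_{L^2_vL^2_x}$, the bound $\|e^{tB}f_H\|_{\tilde L^2_v(B^s_{2,1})}\le Ce^{-\sigma_0 t}\|f\|_{\tilde L^2_v(B^s_{2,1})}$, which handles the large-$j$ tail of the Besov sum.

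For $f_L$, only finitely many dyadic blocks $\Delta_j f_L$ are nonzero (those with $2^{j-1}\lesssim\kappa_0$), so the Besov summation in $j$ costs only a constant factor. On $|\xi|\le\kappa_0$, the spectral decomposition of Lemma \ref{lem: Ukai Theorems 44.3.2--4.3.3}(1) gives the pointwise estimate
\[
\|e^{tB(\xi)}\hat f(\xi)\|_{L^2_v}\le Ce^{-a|\xi|^2 t}\|\hat f(\xi)\|_{L^2_v}+Ce^{-\sigma_0 t}\|\hat f(\xi)\|_{L^2_v}.
\]
The polynomial-decay factor is extracted by a Hausdorff--Young/Hölder argument: combining Minkowski's inequality (valid because $q'\ge 2$) with Hausdorff--Young in $x$ yields $\|\,\|\hat f(\xi,\cdot)\|_{L^2_v}\,\|_{L^{q'}_\xi}\le\|f\|_{L^2_vL^q_x}$, and Hölder with exponents $(r,r')$ satisfying $1/r+1/r'=1$ and $2r'=q'$ converts $\int_{|\xi|\le\kappa_0}e^{-2a|\xi|^2 t}\|\hat f\|_{L^2_v}^2\,d\xi$ into $\bigl(\int_{|\xi|\le\kappa_0}e^{-2ar|\xi|^2 t}d\xi\bigr)^{1/r}\|f\|_{L^2_vL^q_x}^2\le C(1+t)^{-2\alpha}\|f\|_{L^2_vL^q_x}^2$, the identity $d/(2r)=2\alpha$ being the source of the sharp exponent. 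The residual $e^{-\sigma_0 t}$ term is absorbed via the trivial bound $\|f_L\|_{L^2_vL^2_x}\le C\|f\|_{L^2_vL^q_x}$ (Hölder on the bounded frequency ball), and combining with the $f_H$ contribution yields the first estimate.

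For the second estimate involving $I-P_0$, the crucial observation is that $P^{(0)}_i(\xi/|\xi|)(I-P_0)=0$ for each $i$: since $P_0=\sum_i P^{(0)}_i$ is itself an orthogonal projection while each $P^{(0)}_i$ is an orthogonal projection, the $P^{(0)}_i$ must be mutually orthogonal, forcing $P^{(0)}_iP_0=P^{(0)}_i$. Thus the leading-order fluid contribution to $e^{tB(\xi)}$ annihilates $(I-P_0)\hat f$, leaving only $|\xi|\sum_i e^{\mu_i(|\xi|)t}P^{(1)}_i(\xi)$ together with $U(t,\xi)$. The extra factor of $|\xi|$ upgrades the Gaussian integral to $\int_{|\xi|\le\kappa_0}|\xi|^{2r}e^{-2ar|\xi|^2 t}d\xi\sim (1+t)^{-d/2-r}$, whose $1/r$-th power is $(1+t)^{-2\alpha-1}$, producing the extra $(1+t)^{-1/2}$ decay after square-rooting. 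I expect the main obstacle to be the careful bookkeeping needed to prevent the Besov summation over $j$ from amplifying the $L^2_vL^q_x$-based low-frequency estimate, since $L^2_vL^q_x$ carries no Besov structure; this is resolved by performing the low/high frequency split at the level of $f$ rather than of $\Delta_jf$, so that only finitely many $j$ interact with the $L^q$ information while the large-$j$ tail is absorbed into the exponential bound on $|\xi|>\kappa_0$.
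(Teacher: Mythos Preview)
Your proposal is correct and follows essentially the same approach as the paper: Plancherel plus the low/high frequency split at $|\xi|=\kappa_0$, Lemma~\ref{lem: Ukai Theorems 44.3.2--4.3.3}(2) for exponential decay at high frequencies, and on low frequencies the spectral decomposition combined with H\"older, Minkowski, and Hausdorff--Young to extract $(1+t)^{-\alpha}$, with the extra $|\xi|$ from $P_i(\xi)(I-P_0)=|\xi|P^{(1)}_i(\xi)(I-P_0)$ yielding the additional $(1+t)^{-1/2}$. The only cosmetic difference is that you split $f=f_L+f_H$ before applying $\Delta_j$, whereas the paper splits the $\xi$-integral inside each block $I_{1,j}+I_{2,j}$; the two are equivalent since only finitely many $j$ see $\{|\xi|\le\kappa_0\}$.
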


\begin{proof} By the Plancherel identity and Fubini's theorem,
\begin{align*}
&\Vert e^{tB}f \Vert_{\tilde{L}^2_v(B^s_{2,1})} = \overline{\sum} \Vert \Delta_j e^{tB}f\Vert_{L^2_vL^2_x}
=\overline{\sum} \Vert \phi_j e^{tB(\cdot)}\hat{f}\Vert_{L^2_\xi L^2_v}\\
&\le \overline{\sum}\left[ \left( \int_{\vert \xi \vert \le \kappa_0} \phi(2^{-j}\xi)^2 \Vert e^{tB(\xi)}\hat{f}\Vert^2_{L^2_v}d\xi \right)^{1/2}+\left( \int_{\vert \xi \vert > \kappa_0} \phi(2^{-j}\xi)^2 \Vert e^{tB(\xi)}\hat{f}\Vert^2_{L^2_v}d\xi \right)^{1/2}\right].
\end{align*}
We write the two integrals on the right as $I_{1,j}$ and $I_{2,j}$ repectively.
Lemma \ref{lem: Ukai Theorems 44.3.2--4.3.3} gives
\begin{align*}
I_{2,j} \le C \int_{\vert \xi \vert > \kappa_0} \phi(2^{-j}\xi)^2 e^{-2\sigma_0 t} \Vert \hat{f}\Vert^2_{L^2_v}d\xi \le Ce^{-2\sigma_0 t}\Vert \Delta_j f\Vert_{L^2_vL^2_x}^2,
\end{align*}
and
\begin{align*}
I_{1,j} \le C \left(\sum_{i=1}^{d+1} I^i_{1,j}+ e^{-2\sigma_0 t}\Vert \Delta_j f\Vert_{L^2_vL^2_x}^2\right),
\end{align*}
where
\begin{align*}
I^i_{1,j}= \int_{\vert \xi \vert \le \kappa_0} \phi(2^{-j}\xi)^2 e^{2\mathrm{Re}\,\mu_i(\vert \xi\vert) t} \Vert \hat{f}\Vert^2_{L^2_v}d\xi \ge 0.
\end{align*}
We remark that the infinite sum of $\big(I^i_{1,j}\big)^{1/2}$ with respect to $2^{js}$ is actually finite, up to
\begin{align*}
J=\max\{j\ge -1 \ |\ \{\vert \xi \vert \le \kappa_0\} \cap \{2^{j-1}\le \vert \xi\vert\le 2^j\} \neq \emptyset\}.
\end{align*}
Thus we shall find a uniform estimate of $I^i_{1,j}$ with respect to $i$ and $j$.

By the upper bound of $\mathrm{Re}\,\mu_i(\vert\xi\vert)$ on $\{\vert\xi\vert\le\kappa_0\}$, for the triplet $(q, q',p')$ such that $1/2p'+1/q=1$ and $1/p' +1/q'=1$ we have
\begin{align*}
I^i_{1,j}&\le \psi_0 (t)^{1/q'} \Vert \hat{f}\Vert_{L^{2p'}_\xi L^2_v}^2,
\end{align*}
where
\begin{align*}
\psi_m(t)&:=\int_{\vert \xi \vert \le \kappa_0} e^{-2a\vert \xi \vert^2 t\cdot q'} \vert \xi \vert^m d\xi=\vert \mathbb{S}^{d-1} \vert \int^{\kappa_0}_0 e^{-2q'atr^2}r^{d+m-1}dr\\
&=\frac{\vert \mathbb{S}^{d-1} \vert}{2}(2q'at)^{-(d+m)/2}\int^{2q'at\kappa_0^2}_0e^{-s}s^{(d+m)/2-1}ds
\le C(1+t)^{-(m+d)/2}.
\end{align*}
In order to estimate $\Vert \hat{f}\Vert_{L^{2p'}_\xi L^2_v}$, we apply the Minkowski integral inequality 
\begin{align*}
\Vert \hat{f}\Vert_{L^\alpha_\xi L^\beta_v}\le \Vert \hat{f}\Vert_{L^\beta_v L^\alpha_\xi}
\end{align*}
for $1\le \beta \le \alpha\le \infty$ and the inequality $\Vert \hat{f}\Vert_{L^{2p'}_\xi} \le C \Vert f\Vert_{L^q_x}$ for $q\in [1,2]$. Therefore, we obtain
\begin{align*}
I_{1,i} \le C(1+t)^{-d/2q'}\Vert f\Vert_{L^2_vL^q_x}^2=C(1+t)^{-d(1/q-1/2)}\Vert f\Vert_{L^2_vL^q_x}^2,
\end{align*}
which completes the proof of the first desired estimate.

To prove the second estimate, we first notice $(I-P_0)P_i=\vert \xi \vert P^{(1)}_i(\xi)$ in terms of  Lemma \ref{lem: Ukai Theorems 44.3.2--4.3.3}. This implies that one only has to estimate $\psi_{q'}(t)^{1/q'}$ and the similar calculations can be carried out to obtain an extra time-decay $(1+t)^{-1/2}$. This then completes the proof of Lemma \ref{lem:time-decay of semigroup}.
\end{proof}


Recall  \eqref{def.Xnorm} for the definition of the norm $|\!|\!| \cdot |\!|\!|_{\alpha, X}$. We have the following

\begin{lem}\label{lem:Time-dependent estimate of semigroup}
Let $q\in [1,2]$, $s\in \mathbb{R}$, $\beta \ge 0$, $m=0$ or $1$, and $\alpha=d/2(1/q-1/2)$. Then there is a constant $C>0$ such that it holds
\begin{align*}
|\!|\!| e^{tB}(I-P_0)^m f |\!|\!|_{\alpha+m/2, \tilde{L}^\infty_\beta (B^s_{2,1})}\le C \Vert f\Vert_{\tilde{L}^\infty_\beta (B^s_{2,1})\cap \tilde{L}^2_v(B^s_{2,1})\cap L^2_vL^q_x}.
\end{align*}
\end{lem}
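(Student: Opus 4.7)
The plan is to invoke the Duhamel formula
\begin{align*}
e^{tB}(I-P_0)^m f = e^{tA}(I-P_0)^m f + \int_0^t e^{(t-s)A} K e^{sB}(I-P_0)^m f\, ds,
\end{align*}
and estimate each piece in $\tilde{L}^\infty_\beta(B^s_{2,1})$. The operator $e^{tA}$ acts explicitly as $[e^{tA}g](x,v) = e^{-\nu(v)t}g(x-vt,v)$, and since the assumption $0\le\gamma\le 1$ gives $\nu(v)\ge \nu_0>0$, together with the translation invariance of $\|\cdot\|_{L^2_x}$ this implies that $e^{tA}$ maps $\tilde{L}^\infty_\beta(B^s_{2,1})$ into itself with exponential gain $e^{-\nu_0 t}$.

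For the first (free-streaming) term one therefore obtains the bound $e^{-\nu_0 t}\|(I-P_0)^m f\|_{\tilde{L}^\infty_\beta(B^s_{2,1})}$, which after multiplication by $(1+t)^{\alpha+m/2}$ is uniformly bounded. The one wrinkle is that $P_0$ is not a priori bounded in $\tilde{L}^\infty_\beta(B^s_{2,1})$; however, $P_0g$ is a finite sum $\sum_i c_i(x)\chi_i(v)$ with rapidly decaying $\chi_i$ and $\|c_i\|_{L^2_x}\le C\|g\|_{L^2_vL^2_x}$ by Cauchy-Schwarz, so $P_0$ actually maps $\tilde{L}^2_v(B^s_{2,1})$ into $\tilde{L}^\infty_\beta(B^s_{2,1})$. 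Hence $\|(I-P_0)^m f\|_{\tilde{L}^\infty_\beta(B^s_{2,1})}\le C\|f\|_{\tilde{L}^\infty_\beta(B^s_{2,1})\cap\tilde{L}^2_v(B^s_{2,1})}$, which is controlled by the target right-hand side.

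For the Duhamel integral, the same free-streaming estimate reduces the task to bounding $\|K e^{sB}(I-P_0)^m f\|_{\tilde{L}^\infty_\beta(B^s_{2,1})}$ by the $\tilde{L}^2_v(B^s_{2,1})$-norm of $e^{sB}(I-P_0)^m f$, since the latter is then controlled by Lemma \ref{lem:time-decay of semigroup} with decay $(1+s)^{-\alpha-m/2}$. The transfer from an $L^\infty_\beta$-in-$v$ norm to an $L^2_v$ norm rests on the pointwise kernel estimate $\langle v\rangle^\beta\|k(v,\cdot)\|_{L^2_\eta}\le C$, obtainable from Grad's classical bounds on $k$ or by iterating $K\in \mathscr{B}(L^2,L^\infty_0)\cap \mathscr{B}(L^\infty_{\beta-1},L^\infty_\beta)$ from Lemma \ref{lem: Ukai Prop 4.3.1}. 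Combined with Minkowski and Cauchy-Schwarz in $\eta$, and using that $\Delta_j$ commutes with the velocity-only operator $K$,
\begin{align*}
\sup_v\langle v\rangle^\beta \|K\Delta_j e^{sB}(I-P_0)^m f(\cdot,v)\|_{L^2_x} \le C\,\|\Delta_j e^{sB}(I-P_0)^m f\|_{L^2_vL^2_x}.
\end{align*}
Summing against $2^{js}$ and invoking Lemma \ref{lem:time-decay of semigroup} gives $\|K e^{sB}(I-P_0)^m f\|_{\tilde{L}^\infty_\beta(B^s_{2,1})}\le C(1+s)^{-\alpha-m/2}\|f\|_{\tilde{L}^2_v(B^s_{2,1})\cap L^2_vL^q_x}$, and the standard splitting of $\int_0^t e^{-\nu_0(t-s)}(1+s)^{-\alpha-m/2}\,ds$ at $s=t/2$ bounds the Duhamel integral by $C(1+t)^{-\alpha-m/2}\|f\|_{\tilde{L}^2_v(B^s_{2,1})\cap L^2_vL^q_x}$, yielding the claim after multiplication by $(1+t)^{\alpha+m/2}$.

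The main obstacle I anticipate is the velocity-weighted kernel estimate $\langle v\rangle^\beta\|k(v,\cdot)\|_{L^2_\eta}\le C$ that enables the transfer from $\tilde{L}^\infty_\beta$ to $\tilde{L}^2_v$; this is precisely where the $L^2$-$L^\infty$ interplay enters the argument, and although it is well known from Ukai's and Grad's work, the mapping properties of $K$ quoted in Lemma \ref{lem: Ukai Prop 4.3.1} supply it only indirectly. Once this bound is in hand, the remainder is the standard combination of exponential free-streaming decay with the polynomial $L^2$-decay from Lemma \ref{lem:time-decay of semigroup}.
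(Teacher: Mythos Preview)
Your overall strategy---Duhamel for $e^{tB}$, exponential decay of $e^{tA}$, smoothing by $K$, and then Lemma~\ref{lem:time-decay of semigroup}---is the same as the paper's. The gap is the claimed kernel bound
\[
\langle v\rangle^\beta \,\|k(v,\cdot)\|_{L^2_\eta}\le C\qquad\text{for every }\beta\ge 0,
\]
which is what you need to pass in one stroke from $\tilde{L}^2_v(B^s_{2,1})$ to $\tilde{L}^\infty_\beta(B^s_{2,1})$. This estimate is \emph{false} once $\beta$ is larger than a fixed threshold: from Grad's form of $k_2$ one sees that $\|k(v,\cdot)\|_{L^2_\eta}$ decays only like a fixed negative power of $\langle v\rangle$ (for hard spheres in $d=3$ it is $\sim\langle v\rangle^{-1/2}$), and equivalently one can check directly that $K\notin\mathscr{B}(L^2,L^\infty_\beta)$ for large $\beta$ by testing against characteristic functions of unit balls centered far from the origin. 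The mapping properties quoted in Lemma~\ref{lem: Ukai Prop 4.3.1} say exactly this: $K$ sends $L^2$ to $L^\infty_0$ (no weight gained) and $L^\infty_\beta$ to $L^\infty_{\beta+1}$ (one unit gained), so a single application of $K$ cannot furnish an arbitrary weight $\beta$. ``Iterating'' those mapping properties does not help either, since iterating them means applying $K$ several times, whereas your Duhamel integrand contains only one $K$.

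What the paper does instead is to iterate the Duhamel identity itself, not the kernel bound. It proves the bootstrap inequality
\[
|\!|\!| e^{tB}f|\!|\!|_{\alpha,X}\le C\bigl(\|f\|_X+|\!|\!| e^{tB}f|\!|\!|_{\alpha,Y}\bigr)
\]
for each of the pairs $(X,Y)=(\tilde{L}^\infty_{\beta+1}(B^s_{2,1}),\tilde{L}^\infty_\beta(B^s_{2,1}))$ and $(\tilde{L}^\infty_0(B^s_{2,1}),\tilde{L}^2_v(B^s_{2,1}))$, using respectively $K\in\mathscr{B}(L^\infty_\beta,L^\infty_{\beta+1})$ and $K\in\mathscr{B}(L^2,L^\infty_0)$ on the Duhamel integrand. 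Chaining these inequalities from $\tilde{L}^\infty_\beta$ down to $\tilde{L}^\infty_0$ and then to $\tilde{L}^2_v$ yields
\[
|\!|\!| e^{tB}f|\!|\!|_{\alpha,\tilde{L}^\infty_\beta(B^s_{2,1})}\le C\bigl(\|f\|_{\tilde{L}^\infty_\beta(B^s_{2,1})}+|\!|\!| e^{tB}f|\!|\!|_{\alpha,\tilde{L}^2_v(B^s_{2,1})}\bigr),
\]
after which Lemma~\ref{lem:time-decay of semigroup} closes the argument. This bootstrap is the missing ingredient; once you insert it in place of the one-shot kernel estimate, your proof becomes correct and coincides with the paper's.
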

\begin{proof}
First, we prove that
\begin{align*}
\Vert e^{tA}\Vert \le e^{-\nu_0 t} \quad\mathrm{in}\ \mathscr{B}(\tilde{L}^2_v(B^s_{2,1}))\ \mathrm{and}\ \mathscr{B}(\tilde{L}^\infty_\beta (B^s_{2,1})).
\end{align*}
Since $e^{tA}f = e^{-\nu(v)t} f(x-vt,v)$, it holds that 
\begin{align*}
\Vert \Delta_j e^{tA}f \Vert_{L^2_vL^2_x}^2 &= \int_{\mathbb{R}^d}\phi(2^{-j}\xi)^2 \Vert e^{-\nu(\cdot)t} e^{it(\cdot)\cdot \xi} \hat{f}(\xi,\cdot)\Vert^2_{L^2_v}d\xi \le e^{-2\nu_0 t}\Vert \Delta_j f \Vert_{L^2_vL^2_x}^2.
\end{align*}
Therefore one has 
\begin{align*}
\Vert e^{tA}f\Vert_{\tilde{L}^2_v(B^s_{2,1})} =\overline{\sum} \Vert \Delta_j e^{tA}f \Vert_{L^2_vL^2_x} \le e^{-\nu_0 t} \Vert f\Vert_{\tilde{L}^2_v(B^s_{2,1})},
\end{align*}
and the proof for the space $\tilde{L}^\infty_\beta (B^s_{2,1})$ similarly follows.

Next, thanks to the identity of operators
\begin{align}\label{eqn:operator identity}
e^{tB}= e^{tA}+\int^t_0 e^{(t-s)A}Ke^{sB}ds,
\end{align}
we are able to show that
\begin{align}\label{ineq:X-Y estimate}
|\!|\!| e^{tB} f |\!|\!|_{\alpha, X}\le C \left(\Vert f\Vert_X +|\!|\!| e^{tB} f |\!|\!|_{\alpha, Y}\right)
\end{align}
for the pairs of Banach spaces
\begin{align*}
(X,Y)=(\tilde{L}^\infty_0(B^s_{2,1}), \tilde{L}^2_v(B^s_{2,1}))\ \mathrm{and}\ (\tilde{L}^\infty_{\beta +1} (B^s_{2,1}),\tilde{L}^\infty_\beta (B^s_{2,1})),\ \beta\ge 0.
\end{align*} 
For the first pair, one has 
\begin{align*}
|\!|\!| e^{tA} f |\!|\!|_{\alpha, \tilde{L}^\infty_0 (B^s_{2,1})} = \sup_{t\ge 0}(1+t)^\alpha \Vert e^{tA}f\Vert_{\tilde{L}^\infty_0 (B^s_{2,1})}\le \max_{t\ge 0}(1+t)^\alpha e^{-\nu_0 t} \cdot \Vert f\Vert_{\tilde{L}^\infty_0 (B^s_{2,1})},
\end{align*}
and
\begin{align*}
\Big|\!\Big|\!\Big| \int^t_0 e^{(t-s)A}Ke^{sB}fds  \Big|\!\Big|\!\Big|_{\alpha, \tilde{L}^\infty_0 (B^s_{2,1})}&\le \sup_{t\ge 0} (1+t)^\alpha \int^t_0 \Vert e^{(t-s)A}Ke^{sB}f \Vert_{\tilde{L}^\infty_0 (B^s_{2,1})}ds\\
&\le \sup_{t\ge 0} (1+t)^\alpha \int^t_0 e^{-\nu_0(t-s)}\Vert Ke^{sB}f \Vert_{\tilde{L}^\infty_0 (B^s_{2,1})}ds\\
&\le \sup_{t\ge 0} (1+t)^\alpha \int^t_0 e^{-\nu_0(t-s)}\Vert e^{sB}f \Vert_{\tilde{L}^2_v(B^s_{2,1})}ds\\
&
\le |\!|\!| e^{tB}f  |\!|\!|_{\alpha, \tilde{L}^2_v(B^s_{2,1})}\sup_{t\ge 0} (1+t)^\alpha \int^t_0 e^{-\nu_0(t-s)}(1+s)^{-\alpha}ds\\
&
\le C |\!|\!| e^{tB}f  |\!|\!|_{\alpha, \tilde{L}^2_v(B^s_{2,1})}.
\end{align*}
Here, to show the second estimate, we have used the fact that $K\in \mathscr{B}(L^2, L^\infty_0)$ by Lemma \ref{lem: Ukai Prop 4.3.1} for the third line and
\begin{align*}
\int^t_0 e^{-\nu_0(t-s)}(1+s)^{-\alpha'}ds \le C(1+t)^{-\alpha'}\quad (\alpha'\ge 0)
\end{align*}
for the last line.

For the second pair, the estimate of $e^{tA}f$ is the same as above, so one has 
\begin{align*}
\Big|\!\Big|\!\Big| \int^t_0 e^{(t-s)A}Ke^{sB}f ds \Big|\!\Big|\!&\Big|_{\alpha, \tilde{L}^\infty_{\beta +1} (B^s_{2,1})}
\le \sup_{t\ge 0}(1+t)^\alpha \overline{\sum} \sup_{v}\langle v\rangle^{\beta +1}\int^t_0 \Vert \Delta_j e^{(t-s)A}Ke^{sB} f \Vert_{L^2_x}ds\\
&\le \sup_{t\ge 0}(1+t)^\alpha \overline{\sum} \sup_{v}\langle v\rangle^{\beta +1}\int^t_0 e^{-\nu(v)(t-s)} \Vert \Delta_j K e^{sB} f\Vert_{L^2_x} ds\\
&\le C\sup_{t\ge 0}(1+t)^\alpha \overline{\sum} \sup_{v}\langle v\rangle^{\beta +1}\int^t_0 e^{-\nu(v)(t-s)} \langle v\rangle^{-\beta-1}\Vert \Delta_j e^{sB}f\Vert_{L^\infty_\beta L^2_x}ds\\
&\le C |\!|\!| e^{tB} f |\!|\!|_{\alpha, \tilde{L}^\infty_\beta (B^s_{2,1})}.
\end{align*}
Thus \eqref{ineq:X-Y estimate} is true for both of the pairs.

Also, since one can show $K\in \mathscr{B}(L^\infty_0, L^\infty_{\beta'})$ with $0\le\beta' \le 1$ by the same method as for proving $K\in \mathscr{B}(L^\infty_\beta, L^\infty_{\beta+1})$, \eqref{ineq:X-Y estimate} is still true for the choice of
\begin{align*}
(X,Y)=(\tilde{L}^\infty_{\beta'} (B^s_{2,1}),\tilde{L}^\infty_0 (B^s_{2,1})).
\end{align*}

Finally, an iterative use of \eqref{ineq:X-Y estimate} gives
\begin{align*}
|\!|\!| e^{tB} f |\!|\!|_{\alpha, \tilde{L}^\infty_\beta (B^s_{2,1})}\le C \left(\Vert f\Vert_{\tilde{L}^\infty_\beta (B^s_{2,1})} +|\!|\!| e^{tB} f |\!|\!|_{\alpha, \tilde{L}^2_v(B^s_{2,1})}\right), \quad \beta \ge 0.
\end{align*}
Now, by applying the first estimate of Lemma \ref{lem:time-decay of semigroup} to the second term of the right-hand side, we derive the desired estimate for $m=0$. When $m=1$, the same proof works as well, and details are omitted for brevity. This completes the proof of Lemma \ref{lem:Time-dependent estimate of semigroup}.
\end{proof}

Basing on Lemma \ref{lem:Time-dependent estimate of semigroup}, we further have

\begin{lem}\label{lem:Estimate of Psi}
Let $0\le \tilde{\alpha}\neq 1$, $s\in\mathbb{R}$, $\beta \ge 0$, and
\begin{align}\label{a.con}
0\le \alpha \le 
\begin{cases}
\min(3/4, \tilde{\alpha}-1/4) \quad&\mathrm{if}\  d=1,\\
\min(1_-,
\tilde{\alpha}) \quad&\mathrm{if}\  d=2,\\
\min(d/4+1/2, \tilde{\alpha}) \quad&\mathrm{if}\  d\ge 3,
\end{cases}
\end{align}
where $1_-$ denoted $1-\delta$ for an arbitrary small constant $\delta>0$.
Then it holds that
\begin{align*}
|\!|\!| \Psi f |\!|\!|_{\alpha, \tilde{L}^\infty_\beta (B^s_{2,1})} \le C\left(|\!|\!| f |\!|\!|_{\tilde{\alpha}, \tilde{L}^\infty_\beta (B^s_{2,1})}+|\!|\!| \nu f |\!|\!|_{\tilde{\alpha}, \tilde{L}^2_v(B^s_{2,1})\cap L^2_v L^1_x}\right),
\end{align*}
where
\begin{align*}
(\Psi f) (t) :=\int^t_0 e^{(t-s)B}(I-P_0)\nu f(s)ds.
\end{align*}
\end{lem}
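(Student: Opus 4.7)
The plan is to reduce this estimate to a scalar time-convolution bound via Lemma~\ref{lem:Time-dependent estimate of semigroup}. For each $0 \leq s \leq t$, apply Lemma~\ref{lem:Time-dependent estimate of semigroup} with $m=1$ and $q=1$ to the input $\nu f(s)$ to obtain
\begin{align*}
\Vert e^{(t-s)B}(I-P_0)(\nu f(s))\Vert_{\tilde{L}^\infty_\beta(B^s_{2,1})} \leq C(1+(t-s))^{-d/4-1/2}\Vert \nu f(s)\Vert_{\tilde{L}^\infty_\beta(B^s_{2,1})\cap\tilde{L}^2_v(B^s_{2,1})\cap L^2_vL^1_x}.
\end{align*}
The $\tilde{L}^\infty_\beta$-part on the right is controlled by $C(1+s)^{-\tilde{\alpha}}|\!|\!| f|\!|\!|_{\tilde{\alpha},\tilde{L}^\infty_\beta(B^s_{2,1})}$ by treating $\nu(v)\lesssim \langle v\rangle^\gamma$ as a bounded multiplier on $\tilde{L}^\infty_\beta$ (the $\gamma$-weight loss being compatible with the $\beta$-threshold of Theorem~\ref{thm: Solution for the hard potential case}), while the remaining two norms are absorbed into $C(1+s)^{-\tilde{\alpha}}|\!|\!|\nu f|\!|\!|_{\tilde{\alpha},\tilde{L}^2_v(B^s_{2,1})\cap L^2_vL^1_x}$.

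Then Minkowski's integral inequality reduces the task to the classical time-convolution bound
\begin{align*}
\int_0^t (1+t-s)^{-a}(1+s)^{-b}\,ds \lesssim (1+t)^{-\min(a,b,a+b-1)},
\end{align*}
valid for $a,b\neq 1$ and $a+b>1$, proved by splitting the integration at $s=t/2$. Taking $a = d/4+1/2$ and $b = \tilde{\alpha}$, the three regimes in~\eqref{a.con} match exactly the three possibilities: $a<1$ when $d=1$ (so $a=3/4$ and the convolution decays like $(1+t)^{-\min(3/4,\tilde{\alpha}-1/4)}$), the critical $a=1$ when $d=2$ (where a logarithmic factor forces the strict inequality $\alpha<1$, recorded as the $1_-$), and $a>1$ when $d\geq 3$ (where the decay is $(1+t)^{-\min(d/4+1/2,\tilde{\alpha})}$). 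Multiplying through by $(1+t)^\alpha$ and taking $\sup_{t\geq 0}$ produces the claimed inequality.

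The main obstacle lies in justifying the first step, namely controlling $\Vert \nu f(s)\Vert_{\tilde{L}^\infty_\beta(B^s_{2,1})}$ by $|\!|\!| f|\!|\!|_{\tilde{\alpha},\tilde{L}^\infty_\beta(B^s_{2,1})}\cdot(1+s)^{-\tilde{\alpha}}$ without an essential weight loss, since a direct multiplication bound gives only $\Vert \nu f\Vert_{\tilde{L}^\infty_\beta}\leq C\Vert f\Vert_{\tilde{L}^\infty_{\beta+\gamma}}$, which costs $\gamma$ weights in the hard case. The safer resolution, if the absorption into $\beta$ proves insufficient, is to expand $e^{(t-s)B} = e^{(t-s)A}+\int_0^{t-s}e^{(t-s-\sigma)A}K e^{\sigma B}\,d\sigma$ and exploit the explicit form $e^{\tau A}g = e^{-\nu(v)\tau}g(x-v\tau,v)$ together with the uniform-in-$v$ bound $\nu(v)\int_0^t e^{-\nu(v)(t-s)}(1+s)^{-\tilde{\alpha}}\,ds\leq C(1+t)^{-\tilde{\alpha}}$ (valid for hard potentials thanks to $\nu(v)\geq \nu_0>0$), combined with Lemma~\ref{lem: Ukai Prop 4.3.1} for the $K$ part and an iterative application of Lemma~\ref{lem:time-decay of semigroup}. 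The borderline case $d=2$ is the other delicate point, since the standard convolution estimate produces a logarithm that must be dominated by the $\delta$-loss encoded in the notation $1_-$.
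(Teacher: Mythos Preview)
Your direct approach via Lemma~\ref{lem:Time-dependent estimate of semigroup} does not prove the lemma as stated, for exactly the reason you identify: the right-hand side involves $|\!|\!| f|\!|\!|_{\tilde\alpha,\tilde L^\infty_\beta}$ rather than $|\!|\!| \nu f|\!|\!|_{\tilde\alpha,\tilde L^\infty_\beta}$, and for $\gamma>0$ the multiplier $\nu(v)\sim\langle v\rangle^\gamma$ is unbounded on $\tilde L^\infty_\beta$. The weight loss is genuine and no choice of $\beta$ repairs it, since it appears on the input side of the estimate.

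Your ``safer resolution'' is in fact the paper's argument, and your list of ingredients is correct; what is missing from your sketch is the organizing device. The paper introduces
\[
(\Psi_n f)(t)=\int_0^t e^{(t-s)A}(I-P_0)^n\nu f(s)\,ds,\qquad n=0,1,
\]
and establishes the operator identity $\Psi=\Psi_1+\Psi_0\,\nu^{-1}K\,\Psi$ (which is precisely what your Duhamel expansion of $e^{(t-s)B}$ inside the $s$-integral yields after exchanging the order of integration). For $\Psi_0$ and $\Psi_1$ one uses the scalar bound $\sup_v\,\nu(v)\int_0^t e^{-\nu(v)(t-s)}(1+s)^{-\tilde\alpha}\,ds\le C(1+t)^{-\tilde\alpha}$ you cite, and this is where the factor $\nu$ is absorbed into the explicit kernel $e^{-\nu(t-s)}$ with no loss of velocity weight. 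The identity then allows one to iterate the $K$-mapping properties of Lemma~\ref{lem: Ukai Prop 4.3.1} exactly as in the proof of \eqref{ineq:X-Y estimate}, reducing $|\!|\!|\Psi f|\!|\!|_{\alpha,\tilde L^\infty_\beta}$ to $|\!|\!|\Psi_1 f|\!|\!|_{\alpha,\tilde L^\infty_\beta}+|\!|\!|\Psi f|\!|\!|_{\alpha,\tilde L^2_v}$. The $\tilde L^2_v$ piece is then handled by the second estimate of Lemma~\ref{lem:time-decay of semigroup} together with the time-convolution analysis in the three dimensional regimes, which you describe correctly. So your backup plan succeeds once the identity $\Psi=\Psi_1+\Psi_0\,\nu^{-1}K\,\Psi$ is made explicit and used to set up the $\tilde L^\infty_\beta\to\tilde L^2_v$ bootstrap.
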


\begin{proof}
Set
\begin{align*}
(\Psi_n f)(t)=\int^t_0 e^{(t-s)A}(I-P_0)^n\nu f(s)ds, \quad n=0, 1.
\end{align*}
We first observe that
\begin{align}\label{ineq: Time-Dependent Besov Series}
|\!|\!|  f |\!|\!|_{\alpha, \tilde{L}^\infty_\beta (B^s_{2,1})}\le M \Rightarrow \Vert \Delta_j f(t,\cdot,v)\Vert_{L^2_x} \le M(1+t)^{-\alpha} 2^{-js} c_j \langle v\rangle^{-\beta}
\end{align}
for some $c_j\in \ell^1$ with $\Vert c_j\Vert_{\ell^1}\le 1$. Here note that $c_j$ can be independent of $v$ and $t$, for instance, one can take
\begin{align*}
c_j=\frac{2^{js}}{M}\sup_{t,v} (1+t)^\alpha \langle v\rangle^\beta \Vert \Delta_j f(t,v)\Vert_{L^2_x}.
\end{align*}
Then we have
\begin{align*}
|\!|\!| \Psi_0 f |\!|\!|_{\alpha, \tilde{L}^\infty_\beta (B^s_{2,1})}&\le \sup_{t\ge 0}(1+t)^\alpha \overline{\sum}\sup_v \langle v\rangle^\beta \int^t_0 e^{-\nu(v)(t-s)}\nu(v) \Vert \Delta_j f\Vert_{L^2_x}ds \\
&\le |\!|\!| f |\!|\!|_{\tilde{\alpha}, \tilde{L}^\infty_\beta (B^s_{2,1})} \sup_{t\ge 0}(1+t)^\alpha \sum_j c_j \sup_v  \int^t_0 e^{-\nu(v)(t-s)}\nu(v) (1+s)^{-\tilde{\alpha}}ds \\
&\le |\!|\!| f |\!|\!|_{\tilde{\alpha}, \tilde{L}^\infty_\beta (B^s_{2,1})}\sup_{t\ge 0}(1+t)^{\alpha-\tilde{\alpha}}\le |\!|\!| f |\!|\!|_{\tilde{\alpha}, \tilde{L}^\infty_\beta (B^s_{2,1})},
\end{align*}
where we have used the fact that 
\begin{align*}
\sup_v  \int^t_0 e^{-\nu(v)(t-s)}\nu(v) (1+s)^{-\tilde{\alpha}}ds \le C(1+t)^{-\tilde{\alpha}}.
\end{align*}

To estimate $|\!|\!| \Psi_1 f |\!|\!|_{\alpha, L^\infty_\beta B^s_{2,1}}$, we recall the fact that $P_0g=\sum_{i=0}^{d+1}(\chi_i, g)\chi_i$, where
\begin{align*}
\chi_0(v)=M^{1/2}, \ \chi_i(v)=v_iM^{1/2}\  (i=1,\cdots,d),\ \chi_{d+1}(v)=\vert v\vert^2M^{1/2}.
\end{align*}
Thus for each $i$, one has
\begin{align*}
\left| (\nu\Vert f(s,\cdot, v)\Vert_{B^s_{2,1}}, \chi_i)\right| \le C \Vert f(s,\cdot,\cdot)\Vert_{L^\infty_\beta( B^s_{2,1})}
\end{align*}
and
\begin{align*}
\sup_{v\in \mathbb{R}^d}\langle v\rangle^\beta \int^t_0 e^{-\nu(v)(t-s)}(1+s)^{-\tilde{\alpha}}\vert \chi_i(v)\vert ds\le C\sup_{v\in\mathbb{R}^d}\langle v\rangle^\beta \frac{(1+t)^{-\tilde{\alpha}}}{\nu(v)}M^{1/2}(v)\le C(1+t)^{-\tilde{\alpha}}.
\end{align*}
We divide $\Psi_1 f$ by the difference and estimate each term as follows.
First of all, it holds that 
\begin{align*}
\Big|\!\Big|\!\Big| \int^t_0 e^{(t-s)A}P_0(\nu f)ds &\Big|\!\Big|\!\Big|_{\alpha, \tilde{L}^\infty_\beta (B^s_{2,1})} \le \sup_{t\ge 0}(1+t)^\alpha \overline{\sum} \sup_{v} \langle v\rangle^\beta  \int^t_0 \Vert \Delta_j e^{(t-s)A}P_0(\nu f)\Vert_{L^2_x} ds\\
&\le \sup_{t\ge 0}(1+t)^\alpha \overline{\sum} \sup_{v} \langle v\rangle^\beta  \int^t_0 e^{-\nu (v)(t-s)}\Vert \Delta_j P_0(\nu f)\Vert_{L^2_x}ds\\
&\le \sup_{t\ge 0}(1+t)^\alpha \overline{\sum} \sup_{v} \langle v\rangle^\beta  \int^t_0 e^{-\nu (v)(t-s)} \sum_{i=0}^{d+1} \vert (\nu \Vert \Delta_j f\Vert_{L^2_x}, \chi_i)\vert \vert \chi_i(v)\vert ds\\
&\le C\sup_{t\ge 0}(1+t)^\alpha \overline{\sum} \sup_{v} \langle v\rangle^\beta  \int^t_0 e^{-\nu (v)(t-s)} \Vert f\Vert_{\tilde{L}^\infty_\beta (B^s_{2,1})}c_j 2^{-js} \sum_{i=0}^{d+1}\vert \chi_i(v)\vert ds\\
&\le C|\!|\!| f |\!|\!|_{\tilde{\alpha}, \tilde{L}^\infty_\beta (B^s_{2,1})}.
\end{align*}
Here we have used $\alpha \le \tilde{\alpha}$ due to \eqref{a.con}, and also we emphasize that $c_j \in \ell^1$ does not depend on the time variable by the same reason of \eqref{ineq: Time-Dependent Besov Series}. Thus one can deduce that  
\begin{align*}
|\!|\!| \Psi_n f |\!|\!|_{\alpha, \tilde{L}^\infty_\beta (B^s_{2,1})}\le C|\!|\!| f |\!|\!|_{\tilde{\alpha}, \tilde{L}^\infty_\beta (B^s_{2,1})}, \quad n=0,1.
\end{align*}

To further proceed the proof, for the time being we claim that $\Psi = \Psi_1+\Psi_0 \nu^{-1}K\Psi$. Since it is clear to see 
$(\Psi f)(0)=(\Psi_n f)(0)=0$ with $n=0$ and $1$, it suffices to show that the derivatives in time on both sides are identical.
Putting $G=(I-P_0)\nu f$ for brevity, one has 
\begin{align*}
\frac{d}{dt}(\Psi f)(t)&= G(t)+\int^t_0 e^{(t-s)B}BG(s)ds\\
&=G(t)+\int^t_0 B \Big[ e^{(t-s)A}+\int^{(t-s)}_0e^{(t-s-\tau)A}K e^{\tau B} d\tau \Big] G(s)ds\\
&=G(t)+\int^t_0 A e^{(t-s)A}G(s)ds+\int^t_0 K e^{(t-s)A}G(s)ds\\
&+\int^t_0 B \int^{t-s}_0e^{(t-s-\tau)A}Ke^{\tau B} d\tau G(s)ds,
\end{align*}
in terms of \eqref{eqn:operator identity}. Here, sum of the first two terms is identical to $d(\Psi_1 f)/dt$, and sum of the other terms is given by
\begin{align*}
&\int^t_0 K \Big[ e^{(t-s)B}-\int^{(t-s)}_0 e^{(t-s-\tau)A}Ke^{\tau B}d\tau\Big] G(s)ds+\int^t_0 B \int^{(t-s)}_0e^{(t-s-\tau)A}Ke^{\tau B} d\tau F(s)ds\\
&=\int^t_0 K e^{(t-s)B}F(s)ds +\int^t_0 A \int^{(t-s)}_0e^{(t-s-\tau)A}Ke^{\tau B} d\tau F(s)ds\\
&=\int^t_0 K e^{(t-s)B}F(s)ds +\int^t_0 A \int^{t}_{s}e^{(t-\tau')A}Ke^{(\tau'-s) B} d\tau' F(s)ds\\
&=\int^t_0 K e^{(t-s)B}F(s)ds +\int^t_0 \int^{\tau'}_{0}A e^{(t-\tau')A}Ke^{(\tau'-s) B} F(s)ds d\tau', 
\end{align*}
which corresponds to $d(\Psi_0 \nu^{-1}K\Psi f)/dt$. This then proves the claim. 

By the resulting identity $\Psi = \Psi_1+\Psi_0 \nu^{-1}K\Psi$, one can proceed as in the proof of \eqref{ineq:X-Y estimate} to obtain 
\begin{align*}
|\!|\!| \Psi f |\!|\!|_{\alpha, \tilde{L}^\infty_\beta (B^s_{2,1})} \le C\left(|\!|\!| \Psi_1 f |\!|\!|_{\alpha, \tilde{L}^\infty_\beta (B^s_{2,1})} + |\!|\!| \Psi f |\!|\!|_{\alpha, \tilde{L}^2_v(B^s_{2,1})}\right).
\end{align*}
Note that it is straightforward to estimate the first term on the right-hand due to the extra time-decay, but the estimate of the second term depends on the spatial dimension $d$. In fact, to estimate the second term on the right, it follows from Lemma \ref{lem:time-decay of semigroup} that 
\begin{align*}
|\!|\!| \Psi f |\!|\!|_{\alpha, \tilde{L}^2_v(B^s_{2,1})} &\le C\sup_{t\ge 0} (1+t)^\alpha \int^t_0 (1+t-s)^{-d/4-1/2}\Vert \nu f\Vert_{\tilde{L}^2_v(B^s_{2,1})\cap L^2_vL^1_x}ds.
\end{align*}
Thus, recalling \eqref{a.con}, it remains to verify that  
\begin{align}
\label{a.ti}
\sup_{t\ge 0}\, (1+t)^\alpha \int^t_0 (1+t-s)^{-d/4-1/2}(1+s)^{-\tilde{\alpha}}ds
\end{align}
is finite.
If $d\ge 3$, the time integral above is  bounded by $C(1+t)^{-\min(d/4+1/2, \tilde{\alpha})}$. If $d=2$, then one has $d/4+1/2=1$, and   
thus the bound of the time integral can be taken as
\begin{equation*}
C[(1+t)^{-1}\log (1+t) + (1+t)^{-\tilde{\alpha}}]\le C(1+t)^{-\min(1-\delta,\tilde{\alpha})}.
\end{equation*} 
If $d=1$, the bound is given by $C(1+t)^{-\min(3/4, \tilde{\alpha}-1/4)}$. Collecting all cases, for any $d\geq 1$ we have proved that \eqref{a.ti} is finite. 
This then completes the proof of Lemma \ref{lem:Estimate of Psi}. 
\end{proof}

We are now devoted to obtaining the nonlinear estimate, which is crucial to apply the Banach fixed point theorem.

\begin{lem}\label{lem:bilinear estimate of Gamma}
Assume $s>0$ and $\beta >\gamma + d/2$. Then it holds that 
\begin{align*}
\sum_{j \ge -1} 2^{js} \Vert \Delta_j \Gamma(f,g)\Vert_{L^2_v L^2_x}  \le C(\Vert f\Vert_{\tilde{L}^\infty_\beta (B^s_{2,1})} \Vert g\Vert_{L^\infty_\beta L^\infty_x}+\Vert f\Vert_{L^\infty_\beta L^\infty_x} \Vert g\Vert_{\tilde{L}^\infty_\beta (B^s_{2,1})}).
\end{align*}
\end{lem}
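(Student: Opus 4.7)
The plan is to combine Bony's paraproduct decomposition in the spatial variable with a pointwise-in-$x$ bilinear velocity estimate on $\Gamma$ derived from Lemma~\ref{equiv: v_* Integral}.

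\textbf{Step 1 (Paraproduct in $x$).} For each fixed $(v,v_*,\omega)$, the integrand defining $\Gamma(f,g)(x,v)$ is a \emph{pointwise} product in $x$ of the form $f(x,v_\#)\,g(x,v_\flat)$ for velocities $v_\#,v_\flat\in\{v,v_*,v',v_*'\}$. Applying Bony's paraproduct to each such product and then integrating in $(v_*,\omega)$, I would write
$$\Gamma(f,g)=T_f^\Gamma g+T_g^\Gamma f+R^\Gamma(f,g),$$
with $T_f^\Gamma g=\sum_j\Gamma(S_{j-1}f,\Delta_j g)$, $T_g^\Gamma f=\sum_j\Gamma(\Delta_j f,S_{j-1}g)$, and $R^\Gamma(f,g)=\sum_j\Gamma(\Delta_j f,\tilde{\Delta}_j g)$ where $\tilde{\Delta}_j=\Delta_{j-1}+\Delta_j+\Delta_{j+1}$. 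By the support property $\mathrm{supp}\,\widehat{FG}\subset\mathrm{supp}\,\hat F+\mathrm{supp}\,\hat G$ together with \eqref{Nearly Orthogonal}, one has $\Delta_k T_f^\Gamma g=\sum_{|j-k|\le 4}\Delta_k\Gamma(S_{j-1}f,\Delta_j g)$ and $\Delta_k R^\Gamma(f,g)=\sum_{j\ge k-N_0}\Delta_k\Gamma(\Delta_j f,\tilde{\Delta}_j g)$ for a fixed $N_0$.

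\textbf{Step 2 (Velocity bilinear estimate).} I would establish, for each fixed $x$, the pointwise bound
$$\|\Gamma(F(x,\cdot),G(x,\cdot))\|_{L^2_v}\le C\|F(x,\cdot)\|_{L^\infty_\beta}\|G(x,\cdot)\|_{L^2_\gamma}+C\|G(x,\cdot)\|_{L^\infty_\beta}\|F(x,\cdot)\|_{L^2_\gamma}.$$
For the loss part this is immediate: inserting $|F(v_*)|\le\|F\|_{L^\infty_\beta}\langle v_*\rangle^{-\beta}$ and using $\int|v-v_*|^\gamma M_*^{1/2}\langle v_*\rangle^{-\beta}dv_*\le C\langle v\rangle^\gamma$ from Lemma~\ref{equiv: v_* Integral} yields $|\Gamma_-(F,G)(v)|\le C\langle v\rangle^\gamma\|F\|_{L^\infty_\beta}|G(v)|$, whose $L^2_v$ norm is exactly $C\|F\|_{L^\infty_\beta}\|G\|_{L^2_\gamma}$. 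The gain term is handled by Cauchy--Schwarz in $(v_*,\omega)$ combined with the pre/post-collisional identity $M_*M=M'_*M'$, reducing the square to the same type of $v_*$-integral controlled by Lemma~\ref{equiv: v_* Integral}. Using $\|FG\|_{L^2_x}\le\|F\|_{L^\infty_x}\|G\|_{L^2_x}$ and Fubini then gives
$$\|\Gamma(F,G)\|_{L^2_v L^2_x}\le C\|F\|_{L^\infty_\beta L^\infty_x}\|G\|_{L^2_\gamma L^2_x}+C\|G\|_{L^\infty_\beta L^\infty_x}\|F\|_{L^2_\gamma L^2_x}.$$

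\textbf{Step 3 (Assembling the Besov sum).} From the definition of $\|\cdot\|_{\tilde L^\infty_\beta(B^s_{2,1})}$ one has $\|\Delta_j g(\cdot,v)\|_{L^2_x}\le c_j 2^{-js}\langle v\rangle^{-\beta}\|g\|_{\tilde L^\infty_\beta(B^s_{2,1})}$ with $c_j\ge 0$ and $\sum_j c_j\le 1$. Hence
$$\|\Delta_j g\|_{L^2_\gamma L^2_x}^2=\int_{\mathbb{R}^d}\langle v\rangle^{2\gamma}\|\Delta_j g(\cdot,v)\|_{L^2_x}^2\,dv\le c_j^2\,2^{-2js}\|g\|_{\tilde L^\infty_\beta(B^s_{2,1})}^2\int_{\mathbb{R}^d}\langle v\rangle^{2(\gamma-\beta)}dv,$$
and the last integral is finite \emph{precisely} because $\beta>\gamma+d/2$. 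Inserting this into Step~2 with $F=S_{j-1}f$ and $G=\Delta_j g$, together with $\|S_{j-1}f\|_{L^\infty_\beta L^\infty_x}\le\|f\|_{L^\infty_\beta L^\infty_x}$, yields
$$\|\Gamma(S_{j-1}f,\Delta_j g)\|_{L^2_v L^2_x}\le Cc_j 2^{-js}\|f\|_{L^\infty_\beta L^\infty_x}\|g\|_{\tilde L^\infty_\beta(B^s_{2,1})},$$
and the symmetric bound holds for $\Gamma(\Delta_j f,S_{j-1}g)$. The near-diagonal restriction $|j-k|\le 4$ then gives $\sum_k 2^{ks}\|\Delta_k T_f^\Gamma g\|_{L^2_v L^2_x}\le C\|f\|_{L^\infty_\beta L^\infty_x}\|g\|_{\tilde L^\infty_\beta(B^s_{2,1})}$, and similarly for $T_g^\Gamma f$. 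For the residue I rearrange $\sum_k 2^{ks}\|\Delta_k R^\Gamma\|\le C\|\cdots\|\sum_j c_j\sum_{k\le j+N_0}2^{(k-j)s}$, which converges thanks to $s>0$, producing the desired inequality.

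\textbf{Main obstacle.} The principal delicate point is Step~2, in particular the gain term, where the pre/post-collisional change of variables and the choice of weights must interact so that the resulting $v_*$-integral collapses to $\langle v\rangle^\gamma$ via Lemma~\ref{equiv: v_* Integral}; this is what makes the threshold $\beta>\gamma+d/2$ in Step~3 exactly match the $v$-integrability needed to produce an $\ell^1$-summable sequence. Once the pointwise bilinear bound is in place, the rest is routine Littlewood--Paley bookkeeping, with the hypothesis $s>0$ entering only through the geometric summation in the residue term.
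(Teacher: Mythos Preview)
Your overall strategy---Bony paraproduct in $x$ combined with weighted velocity estimates via Lemma~\ref{equiv: v_* Integral}---matches the paper. However, there is a genuine gap in how Step~2 feeds into Step~3. Your Step~2 inequality has the form $\Vert\Gamma(F,G)\Vert_{L^2_v}\le C(A+B)$ with $A=\Vert F\Vert_{L^\infty_\beta}\Vert G\Vert_{L^2_\gamma}$ and $B=\Vert G\Vert_{L^\infty_\beta}\Vert F\Vert_{L^2_\gamma}$, and your loss-term argument only establishes the $A$-bound. In Step~3 you then silently discard one of the two terms for each paraproduct, which is not legitimate: applying the full sum to $T_g^\Gamma f=\sum_j\Gamma(\Delta_j f,S_{j-1}g)$ produces the $A$-term $\Vert\Delta_j f\Vert_{L^\infty_\beta L^\infty_x}\Vert S_{j-1}g\Vert_{L^2_\gamma L^2_x}$, and neither factor carries any $2^{-js}$ decay (Bernstein on $\Delta_j f$ only yields $2^{j(d/2-s)}$, while $S_{j-1}g$ sums low frequencies), so the Besov sum $\sum_k 2^{ks}$ diverges. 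What you actually need is the stronger ``choice'' form $\Vert\Gamma(F,G)\Vert_{L^2_v}\le C\min(A,B)$; for the gain part this is plausible by the symmetry in $(v',v'_*)$, but the reversed loss estimate $\Vert\Gamma_{loss}(F,G)\Vert_{L^2_v}\le CB$ is not ``immediate'' and in fact already requires $\beta>\gamma+d/2$, contrary to your claim that this threshold enters only in Step~3.

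The paper avoids this by not abstracting a fixed bilinear velocity estimate. For each paraproduct piece it first applies Minkowski to pull $L^2_x$ inside the $(v_*,\omega)$-integral, then places the \emph{high-frequency} factor in $L^2_x$ (harvesting $c_j 2^{-js}$) and the low-frequency factor in $L^\infty_x$, \emph{regardless of which sits at $v$ versus $v_*$}; both velocity weights $\langle v_*\rangle^{-\beta}$ and $\langle v\rangle^{-\beta}$ are then extracted simultaneously---one from the Chemin--Lerner norm, the other from $L^\infty_\beta L^\infty_x$---and Lemma~\ref{equiv: v_* Integral} plus the final $L^2_v$ integration close the bound. For the gain terms the paper uses the energy-conservation inequality $\langle v'_*\rangle\langle v'\rangle\ge\langle v\rangle$ rather than the identity $M_*M=M'_*M'$ you cite; your justification of the gain case in Step~2 is too vague to stand on its own.
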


\begin{proof}
Applying the Bony decomposition to the product of $f$ and $g$ in $\Gamma$, we divide $\Gamma(f,g)$ into
\begin{align*}
\sum_{k=1}^3\Gamma^k(f,g)&=\sum_{k=1}^3\Gamma^k_{gain}(f,g)-\Gamma^k_{loss}(f,g), \quad
&\Gamma^1(f,g)=\sum_{i}\Gamma(\Delta_i f, S_{i-1}g), \\
\Gamma^2(f,g)&=\sum_{i}\Gamma(S_{i-1} f, \Delta_i g),\quad
&\Gamma^3(f,g)=\sum_{i,i':\vert i-i'\vert\le 1}\Gamma(\Delta_i f, \Delta_{i'} g),
\end{align*}
where $\Gamma^k_{gain}$ and $\Gamma^k_{loss}$ are defined according to the conventional decomposition of $Q$ into the gain term and loss term, respectively. We first give bounds to the loss terms. Recall the fact that 
\begin{align*}
\Delta_j \sum_i (\Delta_i f S_{i-1}g)=\sum_{\vert i-j\vert \le 4} \Delta_j (\Delta_i f S_{i-1}g)
\end{align*}
due to \eqref{Nearly Orthogonal}. It is well-known that one has  $\Vert \Delta_j g \Vert_{L^p}\le C_p \Vert g\Vert_{L^p}$ and $\Vert S_j g \Vert_{L^p}\le C_p \Vert g\Vert_{L^p}$ for any $j \ge -1$. For $f \in \tilde{L}^\infty_\beta (B^s_{2,1})$, there is a summable positive sequence $\{c_j\}$ such that for any $j$,
\begin{align}\label{ineq: Pointwise estimate of cut function}
\Vert \Delta_j f(\cdot,v)\Vert_{L^2_x}\le \Vert f\Vert_{\tilde{L}^\infty_\beta (B^s_{2,1})}c_j 2^{-js}\langle v\rangle^{-\beta}.
\end{align}
Note that $c_j=c_j^f$ should depend on the function $f$. However, for brevity we would not point out such dependence here and in the sequel.

We first estimate $\Gamma^1_{loss}(f,g)$ as
\begin{align*}
&\overline{\sum} \Vert \Delta_j \Gamma^1_{loss}(f,g)\Vert_{L^2_v L^2_x} \\
&\le \overline{\sum} \sum_{\vert i-j\vert \le 4} \Big( \int_{\mathbb{R}^d} \int_{\mathbb{R}^d} \Big\vert \int_{\mathbb{R}^d} \int_{\mathbb{S}^{d-1}}\vert v-v_* \vert^\gamma  b_0(\theta) M_*^{1/2}\Delta_j (\Delta_i f_* S_{i-1}g) dv_*d\omega \Big\vert^2 dvdx\Big)^{1/2}\\
&\le C\overline{\sum} \sum_{\vert i-j\vert \le 4} \Big( \int_{\mathbb{R}^d} \int_{\mathbb{R}^d} \Big\vert \int_{\mathbb{R}^d} \vert v-v_* \vert^\gamma  M_*^{1/2} \Delta_j (\Delta_i f_* S_{i-1}g) dv_* \Big\vert^2 dvdx\Big)^{1/2}\\
&\le C\overline{\sum} \sum_{\vert i-j\vert \le 4} \Big( \int_{\mathbb{R}^d} \Big\vert \int_{\mathbb{R}^d}  \Big(\int_{\mathbb{R}^d}  \vert \Delta_j (\Delta_i f_* S_{i-1}g)\vert^2dx\Big)^{1/2} \vert v-v_* \vert^\gamma  M_*^{1/2} dv_* \Big\vert^2 dv\Big)^{1/2}\\
&\le C\overline{\sum} \sum_{\vert i-j\vert \le 4} \Big( \int_{\mathbb{R}^d} \Big\vert \int_{\mathbb{R}^d} \Vert \Delta_i f_*\Vert_{L^2_x} \Vert g\Vert_{L^\infty_x} \vert v-v_* \vert^\gamma  M_*^{1/2} dv_* \Big\vert^2 dv\Big)^{1/2}\\
&\le C\Vert f\Vert_{\tilde{L}^\infty_\beta (B^s_{2,1})}\Vert g\Vert_{L^\infty_\beta L^\infty_x}\sum_{j\ge -1} 2^{(j-i)s} \sum_{\vert i-j\vert \le 4}c_i\Big( \int_{\mathbb{R}^d} \Big\vert \int_{\mathbb{R}^d} \langle v \rangle^{-\beta} \langle v_* \rangle^{-\beta} \vert v-v_* \vert^\gamma  M_*^{1/2} dv_* \Big\vert^2 dv\Big)^{1/2}\\
&\le C\Vert f\Vert_{\tilde{L}^\infty_\beta (B^s_{2,1})}\Vert g\Vert_{L^\infty_\beta L^\infty_x}\sum_{j\ge -1} 2^{(j-i)s} \sum_{\vert i-j\vert \le 4}c_i\Big(\int_{\mathbb{R}^d} \langle v \rangle^{-2\beta}  \langle v \rangle^{2\gamma}dv\Big)^{1/2}.
\end{align*}
Here we have used the integrability of $b$ on $\mathbb{S}^{d-1}$, the Minkowski integral inequality $\Vert \Vert\cdot\Vert_{L^1_{v_*}}\Vert_{L^2_x}\le \Vert \Vert\cdot\Vert_{L^2_x}\Vert_{L^1_{v_*}}$, $L^2$-boundedness of $\Delta_j$ and $S_{j-1}$, \eqref{ineq: Pointwise estimate of cut function}, and Lemma \ref{equiv: v_* Integral}. 
The last integral is bounded by the assumption $\beta >\gamma + d/2$, and the sum is finite because of the discrete Young's inequality and positivity of $s$.  Thus the estimate of $\Gamma^1_{loss}(f,g)$ is proved.
Due to symmetry, it also holds that 
\begin{align*}
\overline{\sum} \Vert \Delta_j \Gamma^2_{loss}(f,g)\Vert_{L^2_v L^2_x}\le C \Vert f\Vert_{L^\infty_\beta L^\infty_x}\Vert g\Vert_{\tilde{L}^\infty_\beta (B^s_{2,1})}.
\end{align*}

In order to estimate the term coming from $\Gamma^3_{loss}(f,g)$, we recall the following property:
\begin{align*}
\Delta_j\sum_{i,i':\vert i-i'\vert\le1}\Delta_i f \Delta_{i'} g = \sum_{\max(i,i')\ge j-2}\sum_{\vert i-i'\vert\le1}\Delta_j(\Delta_i f \Delta_{i'} g).
\end{align*}
For brevity we write
\begin{align*}
\overline{\sum}'=\sum_{j\ge -1} 2^{js}\sum_{\max(i,i')\ge j-2}\sum_{\vert i-i'\vert\le1}.
\end{align*}
Then it follows that
\begin{align*}
&\overline{\sum} \Vert \Delta_j \Gamma^3_{loss}(f,g)\Vert_{L^2_v L^2_x} \\
&\le C\overline{\sum}' \Big( \int_{\mathbb{R}^d} \int_{\mathbb{R}^d} \Big\vert \int_{\mathbb{R}^d} \int_{\mathbb{S}^{d-1}}\vert v-v_* \vert^\gamma  b_0(\theta) M_*^{1/2} \Delta_j (\Delta_i f_* \Delta_{i'}g) dv_*d\omega \Big\vert^2 dvdx\Big)^{1/2}\\
&\le C\overline{\sum}' \Big(\int_{\mathbb{R}^d} \Big\vert\int_{\mathbb{R}^d}\Big(  \int_{\mathbb{R}^d} \vert\Delta_j (\Delta_i f_* \Delta_{i'}g)\vert^2 dx\Big)^{1/2}\vert v-v_* \vert^\gamma  M_*^{1/2} dv_* \Big\vert^2 dv\Big)^{1/2}\\
&\le C\overline{\sum}' \Big(\int_{\mathbb{R}^d} \Big\vert\int_{\mathbb{R}^d}\Vert\Delta_i f_*\Vert_{L^2_x} \Vert g\Vert_{L^\infty_x} \vert v-v_* \vert^\gamma  M_*^{1/2} dv_* \Big\vert^2 dv\Big)^{1/2}\\
&\le C\Vert f\Vert_{\tilde{L}^\infty_\beta (B^s_{2,1})} \Vert g\Vert_{L^\infty_\beta L^\infty_x}\sum_{j\ge -1} \sum_{i\ge j-3}2^{(j-i)s}c_i \Big(\int_{\mathbb{R}^d} \Big\vert\int_{\mathbb{R}^d} \langle v_*\rangle^{-\beta} \langle v\rangle^{-\beta} \vert v-v_* \vert^\gamma  M_*^{1/2} dv_* \Big\vert^2 dv\Big)^{1/2}\\
&\le C\Vert f\Vert_{\tilde{L}^\infty_\beta (B^s_{2,1})} \Vert g\Vert_{L^\infty_\beta L^\infty_x}\sum_{i\ge -4} \sum_{j=-1}^{i+3}2^{(j-i)s}c_i.
\end{align*}
The sum in the last line is further bounded from the same reason as used before. Thus, we obtain the desired estimates on all the loss terms.

The gain terms $\Gamma^k_{gain}(f,g)$ can be  estimated  as for 
$\Gamma^k_{loss}(f,g)$. 
Indeed, it suffices to consider the boundedness of 
\begin{align*}
\int_{\mathbb{R}^d} \left(\int_{\mathbb{R}^d}\int_{\mathbb{S}^{d-1}} \langle v'_*\rangle^{-\beta} \langle v'\rangle^{-\beta} \vert v-v_* \vert^\gamma b_0(\theta)  M_*^{1/2} dv_* d\omega\right)^2 dv,
\end{align*}
where we have applied the inequality $\Vert \Vert\cdot\Vert_{L^1_{v_*, \omega}}\Vert_{L^2_x}\le \Vert \Vert\cdot\Vert_{L^2_x}\Vert_{L^1_{v_*, \omega}}$. The above integral is finite, because the conservation law of energy yields
\begin{align}\label{ineq: Pre-Post Bessel Potential}
\langle v'_*\rangle \langle v'\rangle =\left[(1+\vert v'_*\vert^2)((1+\vert v'\vert^2)\right]^{1/2} \ge (1+\vert v'_*\vert^2+\vert v'\vert^2)^{1/2}\ge \langle v\rangle,
\end{align}
so that we are able to apply Lemma \ref{equiv: v_* Integral} once again. Therefore, by combining all  estimates, we complete the proof of Lemma \ref{lem:bilinear estimate of Gamma}.
\end{proof}

We point out that the estimate of the nonlinear term for the case of soft potentials can also be derived by the similar argument above in spite of the fourth remark in Subsection \ref{sub1.3}; see the proof of Theorem \ref{thm: Time-Decay of the nonlinear solution for soft potential}.

The continuous embedding $B^{d/2}_{2,1}(\mathbb{R}^d)\hookrightarrow L^\infty(\mathbb{R}^d)$ leads to the following

\begin{cor}\label{cor:CL estimate of Gamma}
Assume $s\ge d/2$ and $\beta>\gamma +d/2$. Then it holds
\begin{align*}
\Vert \Gamma(f,g)\Vert_{\tilde{L}^2_v(B^s_{2,1})}=\sum_{j \ge -1} 2^{js} \Vert \Delta_j \Gamma(f,g)\Vert_{L^2_v L^2_x}  \le C\Vert f\Vert_{\tilde{L}^\infty_\beta (B^s_{2,1})}\Vert g\Vert_{\tilde{L}^\infty_\beta (B^s_{2,1})}.
\end{align*}
\end{cor}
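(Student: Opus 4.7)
The plan is to read this corollary as a direct consequence of Lemma \ref{lem:bilinear estimate of Gamma} combined with two embeddings: the scalar Besov embedding $B^{d/2}_{2,1}(\mathbb{R}^d_x)\hookrightarrow L^\infty(\mathbb{R}^d_x)$ (applied in the spatial variable, pointwise in $v$) and the monotonicity of Besov scales to pass from index $d/2$ up to index $s\ge d/2$.

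First I would invoke Lemma \ref{lem:bilinear estimate of Gamma}, whose hypothesis $s>0$ is automatic from $s\ge d/2$ (and $\beta>\gamma+d/2$ is assumed), to obtain the bound of $\sum_{j\ge-1}2^{js}\|\Delta_j\Gamma(f,g)\|_{L^2_vL^2_x}$ by the sum of $\|f\|_{\tilde{L}^\infty_\beta(B^s_{2,1})}\|g\|_{L^\infty_\beta L^\infty_x}$ and $\|f\|_{L^\infty_\beta L^\infty_x}\|g\|_{\tilde{L}^\infty_\beta(B^s_{2,1})}$. The entire task then reduces to replacing each occurrence of $\|\cdot\|_{L^\infty_\beta L^\infty_x}$ by $\|\cdot\|_{\tilde{L}^\infty_\beta(B^s_{2,1})}$.

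For this replacement, I would apply the scalar embedding $B^{d/2}_{2,1}\hookrightarrow L^\infty$ pointwise in $v$, giving $\|h(\cdot,v)\|_{L^\infty_x}\le C\sum_{j\ge-1}2^{jd/2}\|\Delta_j h(\cdot,v)\|_{L^2_x}$. Multiplying by $\langle v\rangle^\beta$ and taking $\sup_v$, and using that the sup of a nonnegative sum is bounded by the sum of the sups, yields $\|h\|_{L^\infty_\beta L^\infty_x}\le C\|h\|_{\tilde{L}^\infty_\beta(B^{d/2}_{2,1})}$. Finally, since $s\ge d/2$, comparing the two weighted series dyadic block by dyadic block shows $\|h\|_{\tilde{L}^\infty_\beta(B^{d/2}_{2,1})}\le C_s\|h\|_{\tilde{L}^\infty_\beta(B^s_{2,1})}$, so combining the three steps proves the corollary with the stated constant.

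There is no real obstacle here; the proof is essentially bookkeeping. The only mildly delicate point is the low-frequency block $j=-1$ when comparing the $B^{d/2}_{2,1}$ and $B^s_{2,1}$ norms: on that single block the inequality $2^{jd/2}\le 2^{js}$ fails when $s>d/2$, but since it concerns just one dyadic block the discrepancy is merely a multiplicative constant $2^{s-d/2}$ that can be absorbed into $C$. Everything else is a transparent Fubini/embedding argument.
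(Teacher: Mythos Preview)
Your proposal is correct and follows exactly the paper's approach: the paper derives the corollary in one line from Lemma \ref{lem:bilinear estimate of Gamma} via the continuous embedding $B^{d/2}_{2,1}(\mathbb{R}^d)\hookrightarrow L^\infty(\mathbb{R}^d)$. Your additional remarks about applying the embedding pointwise in $v$, taking suprema, and handling the $j=-1$ block are the natural unpacking of that step and are all fine.
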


We are now ready to show the global existence of a mild solution to the Cauchy problem \eqref{eqn: BE near M}.

\begin{proof}[Proof of Theorem \ref{thm: Solution for the hard potential case}]

It suffices to show
\begin{align}\label{ineq:Contraction property}
|\!|\!| \nu^{-1}\Gamma(f,g) |\!|\!|_{2\alpha, \tilde{L}^\infty_\beta (B^s_{2,1})}+|\!|\!| \Gamma(f,g) |\!|\!|_{2\alpha, \tilde{L}^2_v (B^s_{2,1})\cap L^2_v L^1_x}\le C|\!|\!| f |\!|\!|_{\alpha, \tilde{L}^\infty_\beta (B^s_{2,1})}|\!|\!| g |\!|\!|_{\alpha, \tilde{L}^\infty_\beta (B^s_{2,1})}.
\end{align}
Indeed, let us first suppose that the above estimate is true. Since the mild form of the Cauchy problem can also be written as
\begin{align*}
f(t)=e^{tB}f_0+(\Psi \nu^{-1} \Gamma(f,f))(t)=:N(f)(t),
\end{align*}
we obtain a unique global mild solution if the nonlinear mapping $N$ is a contraction for initial data $f_0$ sufficiently small in some sense. Together with \eqref{ineq:Contraction property}, we apply Lemma \ref{lem:Estimate of Psi} with $2\alpha = \tilde{\alpha}=d/2(1/q-1/2)$, to deduce that 
\begin{align*}
|\!|\!| N(f) |\!|\!|_{\alpha, \tilde{L}^\infty_\beta (B^s_{2,1})} &\le C\Vert f_0\Vert_{\tilde{L}^\infty_\beta (B^s_{2,1}) \cap L^2_v L^q_x}+C|\!|\!| f|\!|\!|_{\alpha, \tilde{L}^\infty_\beta (B^s_{2,1})}^2,\\
|\!|\!| N(f)-N(\tilde{f}) |\!|\!|_{\alpha, \tilde{L}^\infty_\beta (B^s_{2,1})}&\le C(|\!|\!| f |\!|\!|_{\alpha, \tilde{L}^\infty_\beta (B^s_{2,1})}+|\!|\!| \tilde{f} |\!|\!|_{\alpha, \tilde{L}^\infty_\beta (B^s_{2,1})})|\!|\!| f-\tilde{f} |\!|\!|_{\alpha, \tilde{L}^\infty_\beta (B^s_{2,1})},
\end{align*}
where we have used the inclusion $\tilde{L}^\infty_\beta (B^s_{2,1})\hookrightarrow \tilde{L}^2_v(B^s_{2,1})$ with $\beta >d/2$.
Also, we remark that even if $2\alpha=\tilde{\alpha}=1$, which may cause a logarithmic increase, the argument for Lemma \ref{lem:Estimate of Psi} provides the sufficient time-decay rate for proceeding the estimates, and details of the proof are omitted for brevity. Then it follows that $N$ is a contraction on the set 
\begin{align*}
\left\{ \left.f\in L^\infty\left(0,\infty; \tilde{L}^\infty_\beta (B^s_{2,1})\right)\ \right| \ |\!|\!| f |\!|\!|_{\alpha, \tilde{L}^\infty_\beta (B^s_{2,1})}\le a\right\}
\end{align*}
for a suitable constant $a>0$, provided that $f_0$ is small enough in the space $\tilde{L}^\infty_\beta (B^s_{2,1}) \cap L^2_v L^q_x$.\qed
\medskip

Now it remains to show the nonlinear estimate \eqref{ineq:Contraction property}. We start with the bilinear estimate of $\nu^{-1}\Gamma(\cdot,\cdot)$ in $L^\infty_\beta$ with respect to velocity variable only, cf.~\cite{UA}.

\begin{lem}
For $\beta \ge 0$, it holds that 
\begin{align}\label{lem:Estimate of nu inverse gamma}
\Vert \nu^{-1}\Gamma(F,G)\Vert_{L^\infty_\beta}\le C\Vert F\Vert_{L^\infty_\beta}\Vert G\Vert_{L^\infty_\beta}.
\end{align}
\end{lem}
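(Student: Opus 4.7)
The plan is to bound $\nu^{-1}\Gamma(F,G)$ pointwise in $v$ by $\|F\|_{L^\infty_\beta}\|G\|_{L^\infty_\beta}\langle v\rangle^{-\beta}$, splitting $\Gamma$ into its gain and loss parts just as in the Bony-decomposition estimates of Lemma~\ref{lem:bilinear estimate of Gamma}, but now without the spatial Besov bookkeeping since only the velocity variable is involved.

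For the loss piece $\Gamma_{\mathrm{loss}}(F,G)(v) = G(v)\int_{\mathbb{R}^d}\int_{\mathbb{S}^{d-1}} B(v-v_*,\omega)M_*^{1/2}F_*\, d\omega dv_*$, I would pull out the bounds $|F_*|\le \|F\|_{L^\infty_\beta}\langle v_*\rangle^{-\beta}$ and $|G(v)|\le \|G\|_{L^\infty_\beta}\langle v\rangle^{-\beta}$, use $b_0(\theta)\le C|\cos\theta|$ to integrate out $\omega$, and then apply Lemma~\ref{equiv: v_* Integral} to evaluate
\[
\int_{\mathbb{R}^d} |v-v_*|^\gamma \langle v_*\rangle^{-\beta} M_*^{1/2}\, dv_* \le C\langle v\rangle^\gamma.
\]
This gives $|\Gamma_{\mathrm{loss}}(F,G)(v)|\le C\|F\|_{L^\infty_\beta}\|G\|_{L^\infty_\beta}\langle v\rangle^{\gamma-\beta}$. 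Since $\nu(v)^{-1}\sim \langle v\rangle^{-\gamma}$, multiplying by $\nu^{-1}$ produces exactly the $\langle v\rangle^{-\beta}$ decay needed.

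For the gain piece $\Gamma_{\mathrm{gain}}(F,G)(v)=\int\!\!\int B\, M_*^{1/2}F'_*G'\, d\omega dv_*$, the key trick is to use the weights at the post-collisional velocities. Bounding $|F'_*|\le \|F\|_{L^\infty_\beta}\langle v'_*\rangle^{-\beta}$ and $|G'|\le \|G\|_{L^\infty_\beta}\langle v'\rangle^{-\beta}$, then invoking the elementary inequality \eqref{ineq: Pre-Post Bessel Potential}, namely $\langle v'\rangle\langle v'_*\rangle\ge \langle v\rangle$, gives $\langle v'_*\rangle^{-\beta}\langle v'\rangle^{-\beta}\le \langle v\rangle^{-\beta}\langle v'_*\rangle^{-\beta}$ (after distributing one copy of the bound appropriately, using $\beta\ge 0$). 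This eliminates the dependence on the post-collisional velocities inside the integrand and reduces the estimate to the same $v_*$-integral as in the loss case, which is again controlled by $C\langle v\rangle^\gamma$ via Lemma~\ref{equiv: v_* Integral}.

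The only subtle point is the management of the post-collisional weights for the gain term: one must be careful to distribute the power $\beta$ so that the factor $\langle v\rangle^{-\beta}$ can be extracted cleanly without using any change of variables $(v_*,\omega)\mapsto(v'_*,v')$. Using \eqref{ineq: Pre-Post Bessel Potential} in the form $\langle v'_*\rangle^{-\beta}\langle v'\rangle^{-\beta}\le \langle v\rangle^{-\beta}\bigl(\langle v'_*\rangle\langle v'\rangle\bigr)^{-\beta}\langle v\rangle^{\beta}\cdot\langle v\rangle^{-\beta}$, and then bounding each post-collisional bracket by one, resolves this. Combining gain and loss bounds and multiplying by $\nu(v)^{-1}\sim\langle v\rangle^{-\gamma}$ yields $\|\nu^{-1}\Gamma(F,G)\|_{L^\infty_\beta}\le C\|F\|_{L^\infty_\beta}\|G\|_{L^\infty_\beta}$, as desired.
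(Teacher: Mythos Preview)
Your overall strategy is correct and is the standard one; the paper itself does not prove this lemma but simply cites \cite{UA}, and the argument you outline is exactly the velocity-only analogue of the computations in Lemma~\ref{lem:bilinear estimate of Gamma}. The loss term is handled cleanly.

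However, your treatment of the gain-term weights is muddled and, as written, contains an incorrect step. The displayed inequality $\langle v'_*\rangle^{-\beta}\langle v'\rangle^{-\beta}\le \langle v\rangle^{-\beta}\langle v'_*\rangle^{-\beta}$ would require $\langle v'\rangle\ge\langle v\rangle$, which is false in general; and the chain in your final paragraph reduces to $(\langle v'_*\rangle\langle v'\rangle)^{-\beta}\le (\langle v'_*\rangle\langle v'\rangle)^{-\beta}\langle v\rangle^{-\beta}$, i.e.\ $1\le\langle v\rangle^{-\beta}$, which is also false for $\beta>0$. No ``distribution'' is needed: from \eqref{ineq: Pre-Post Bessel Potential} and $\beta\ge 0$ one gets directly
\[
\langle v'_*\rangle^{-\beta}\langle v'\rangle^{-\beta}=(\langle v'_*\rangle\langle v'\rangle)^{-\beta}\le \langle v\rangle^{-\beta},
\]
and then the remaining integral $\int_{\mathbb{R}^d}\int_{\mathbb{S}^{d-1}}|v-v_*|^\gamma b_0(\theta)M_*^{1/2}\,d\omega\,dv_*\le C\langle v\rangle^{\gamma}$ by Lemma~\ref{equiv: v_* Integral}. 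Multiplying by $\nu(v)^{-1}\sim\langle v\rangle^{-\gamma}$ finishes the gain bound. Replace your last two paragraphs with this one-line application and the proof is complete.
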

First of all, one has 
\begin{align*}
|\!|\!| \Gamma(f,g) |\!|\!|_{2\alpha, \tilde{L}^2_v (B^s_{2,1})}\le C |\!|\!| f |\!|\!|_{\alpha, \tilde{L}^\infty_\beta (B^s_{2,1})}|\!|\!| g |\!|\!|_{\alpha, \tilde{L}^\infty_\beta (B^s_{2,1})},
\end{align*}
which is an immediate consequence of Corollary \ref{cor:CL estimate of Gamma}, and also it holds that 
\begin{align*}
|\!|\!| \Gamma(f,g) |\!|\!|_{2\alpha, L^2_vL^1_x} &\le\sup_{t\ge 0}(1+t)^{2\alpha} \Vert \Gamma(\Vert f\Vert_{L^2_x},\Vert g\Vert_{L^2_x})\Vert_{L^2_v}\\
&\le \sup_{t\ge 0} (1+t)^{2\alpha}\Vert \nu^{-1} \Gamma(\Vert f\Vert_{L^2_x},\Vert g\Vert_{L^2_x})\Vert_{L^\infty_\beta}\Vert \nu \langle\cdot\rangle^{-\beta}\Vert_{L^2}\\
&\le C|\!|\!| f |\!|\!|_{\alpha, L^\infty_\beta B^s_{2,1}}|\!|\!| g |\!|\!|_{\alpha, L^\infty_\beta B^s_{2,1}}\\
&\le C|\!|\!| f |\!|\!|_{\alpha, \tilde{L}^\infty_\beta (B^s_{2,1})}|\!|\!| g |\!|\!|_{\alpha, \tilde{L}^\infty_\beta (B^s_{2,1})},
\end{align*}
in terms  of \eqref{lem:Estimate of nu inverse gamma} and the boundedness of $\Vert \nu \langle\cdot\rangle^{-\beta}\Vert_{L^2}$ for $\beta>\gamma+d/2$.

The remaining part is to further show
\begin{align*}
|\!|\!| \nu^{-1}\Gamma(f,g) |\!|\!|_{2\alpha, \tilde{L}^\infty_\beta (B^s_{2,1})}\le C|\!|\!| f |\!|\!|_{\alpha, \tilde{L}^\infty_\beta (B^s_{2,1})}|\!|\!| g |\!|\!|_{\alpha, \tilde{L}^\infty_\beta (B^s_{2,1})}.
\end{align*}
Note that although \eqref{lem:Estimate of nu inverse gamma} cannot be directly applied, one can still proceed as in the proof of Lemma \ref{lem:bilinear estimate of Gamma}. We only consider the estimate of the term containing $\Gamma^3_{loss}$, since the other terms can be similarly estimated. Indeed, one has 
\begin{align*}
&\overline{\sum}\sup_v \langle v \rangle^\beta \Vert \Delta_j \nu^{-1}\Gamma^3_{loss}(f,g)\Vert_{L^2_x} \\
&\le C\overline{\sum}' \sup_v \langle v \rangle^\beta \Big( \int_{\mathbb{R}^d} \Big( \int_{\mathbb{R}^d}  \vert v-v_*\vert^\gamma M_*^{1/2} \nu^{-1} \Delta_j (\Delta_i f_* \Delta_{i'} g) dv_* \Big)^2 dx\Big)^{1/2}\\
&\le C\overline{\sum}' \sup_v \langle v \rangle^\beta \int_{\mathbb{R}^d} \Big(\int_{\mathbb{R}^d} \vert \Delta_j (\Delta_i f_* \Delta_{i'} g)\vert^2 dx\Big)^{1/2} \vert v-v_*\vert^\gamma \nu^{-1} M_*^{1/2}dv_*\\
&\le C\overline{\sum}' \sup_v \langle v \rangle^\beta \int_{\mathbb{R}^d} \Vert \Delta_i f_*\Vert_{L^2_x} \Vert g\Vert_{L^\infty_x} \vert v-v_*\vert^\gamma\nu^{-1} M_*^{1/2}dv_*\\
&\le C\Vert f\Vert_{\tilde{L}^\infty_\beta (B^s_{2,1})}\Vert g\Vert_{L^\infty_\beta L^\infty_x}\sum_{i\ge -4} \sum_{j=-1}^{i+3}2^{(j-i)s}c_i  \sup_v \langle v \rangle^\beta \int_{\mathbb{R}^d}  \langle v_*\rangle^{-\beta} \langle v\rangle^{-\beta} \vert v-v_*\vert^\gamma\langle v\rangle^{-\gamma} M_*^{1/2}dv_*.
\end{align*}
Here, owing to Lemma \ref{equiv: v_* Integral}, the term containing $\sup_v$ in the last line is dominated by a constant.  
Thus we have the desired estimate. As we have shown \eqref{ineq:Contraction property}, the Banach fixed point theorem assures the existence of a unique global mild solution. This completes the proof of Theorem \ref{thm: Solution for the hard potential case}.
\end{proof}

\section{Solution for the soft potential case}\label{Ch: Soft}

We now turn to the proof of Theorem \ref{thm: Solution for the soft potential case} in the case of soft potentials  $-d<\gamma<0$. Through this section, we set the spatial dimension $d\geq 3$. As in the hard potential case, we start from considering the time-decay in the space $\tilde{L}^2_v(B^s_{2,1})$ for the solution to the Cauchy problem \eqref{eqn: Linearized Equation} with the help of Lemma \ref{lem: DYZ} whose proof is based on the pure energy method. In contrast to Lemma \ref{lem:time-decay of semigroup}, one can not have any extra decay for the non-fluid initial data.   
 

\begin{lem}\label{lem: Time-decay of velocity-weighted square integrable space}
Assume $-d<\gamma<0$. Take $\ell \ge 0$, $1 \le q\le 2$, and $J>d(1/q-1/2)=2\alpha$. Let $f(t,x,v)$ be the solution to the Cauchy problem \eqref{eqn: Linearized Equation} with initial data $f_0(x,v)$. Then it holds that 
\begin{align}
\Vert \nu^{-\ell/2} f(t) \Vert_{\tilde{L}^2_v(B^s_{2,1})} \le &C (1+t)^{-J/2} \Vert \nu^{-(\ell+J_+)/2}f_0\Vert_{\tilde{L}^2_v(B^s_{2,1})}\notag\\
&\qquad\qquad+C(1+t)^{-\alpha}\Vert \nu^{-(\ell+J_+)/2} f_0\Vert_{L^2_vL^q_x},\label{ad.lem.td}
\end{align} 
for all $t\geq 0$.
\end{lem}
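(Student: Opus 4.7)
The plan is to combine the Fourier-side pointwise decay estimate of Lemma~\ref{lem: DYZ} with the Littlewood--Paley decomposition in $x$. Since the linearized equation~\eqref{eqn: Linearized Equation} is $x$-translation invariant, it commutes with every $\Delta_j$, so $\Delta_j f$ solves the same Cauchy problem with datum $\Delta_j f_0$. Writing $\phi_j(\xi)=\phi(2^{-j}\xi)$ for $j\ge 0$ and $\phi_{-1}(\xi)=\chi(\xi)$, Plancherel's theorem in $x$, together with the equivalence $\mathcal{E}_\ell(t,\xi)\sim \|\mu^\ell \hat{f}(t,\xi)\|_{L^2_v}^2$ and $\nu^{-\ell/2}\sim \mu^\ell$ (valid since $\mu=\langle v\rangle^{-\gamma/2}$ and $\nu\sim \langle v\rangle^\gamma$), reduces matters to
\[
\|\nu^{-\ell/2}\Delta_j f(t)\|_{L^2_v L^2_x}^2 \le C\int_{\mathbb{R}^d}\phi_j(\xi)^2\,(1+\rho(\xi)t)^{-J}\,\|\mu^{\ell+J_+}\hat{f}_0(\xi)\|_{L^2_v}^2\,d\xi.
\]

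For the high-frequency blocks $j\ge 0$, the support of $\phi_j$ sits inside $\{|\xi|\gtrsim 1\}$, where $\rho(\xi)=|\xi|^2/(1+|\xi|^2)$ is bounded below by a positive constant, so $(1+\rho(\xi)t)^{-J}\le C(1+t)^{-J}$. Pulling this factor out of the integral and applying Plancherel on the data side gives
\[
\|\nu^{-\ell/2}\Delta_j f(t)\|_{L^2_v L^2_x}\le C(1+t)^{-J/2}\|\mu^{\ell+J_+}\Delta_j f_0\|_{L^2_v L^2_x},
\]
and multiplying by $2^{js}$ and summing over $j\ge 0$ produces the first contribution $C(1+t)^{-J/2}\|\nu^{-(\ell+J_+)/2} f_0\|_{\tilde{L}^2_v(B^s_{2,1})}$ of \eqref{ad.lem.td}.

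The technical step is the low-frequency block $j=-1$, where $|\xi|\le 4/3$ and $\rho(\xi)\sim |\xi|^2$, and one must squeeze out the rate $(1+t)^{-\alpha}$ at the price of strengthening the spatial integrability of the datum from $L^2_x$ to $L^q_x$. I would apply H\"older's inequality in $\xi$ with the conjugate pair $(p,p')=(q'/2,\,q'/(q'-2))$ (admissible since $q\le 2$), then swap norms by Minkowski's inequality from $L^{q'}_\xi L^2_v$ to $L^2_v L^{q'}_\xi$, and finally invoke Hausdorff--Young $\|\hat g(\cdot,v)\|_{L^{q'}_\xi}\le C\|g(\cdot,v)\|_{L^q_x}$ applied to $g=\mu^{\ell+J_+}f_0$, arriving at
\[
\|\nu^{-\ell/2}\Delta_{-1} f(t)\|_{L^2_v L^2_x}^2 \le C\Big(\int_{|\xi|\le 4/3}(1+|\xi|^2 t)^{-Jp'}\,d\xi\Big)^{1/p'}\|\mu^{\ell+J_+}f_0\|_{L^2_v L^q_x}^2.
\]
The main obstacle is the scaling bookkeeping on the remaining $\xi$-integral: rescaling $y=\sqrt{t}\xi$ shows convergence requires $2Jp'>d$, i.e.\ $J>d/(2p')$, and the identity $1/p'=1-2/q'=2/q-1$ pins down $d/(2p')=d(1/q-1/2)=2\alpha$, matching the hypothesis $J>2\alpha$ exactly. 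Under this threshold the integral decays like $(1+t)^{-d/2}$, so after taking the $1/p'$ power and the square root one obtains the rate $(1+t)^{-\alpha}$ with the $L^2_v L^q_x$ datum, yielding the second contribution to \eqref{ad.lem.td}. Combining the low- and high-frequency estimates and summing against $2^{js}$ completes the proof.
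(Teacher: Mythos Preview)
Your proof is correct and follows essentially the same route as the paper: apply Lemma~\ref{lem: DYZ} on the Fourier side, split into regions where $\rho(\xi)$ is or is not bounded below, and treat the low-frequency piece with the identical H\"older/Minkowski/Hausdorff--Young combination (the same exponent $2p=q'$ and the same threshold $Jp'>d/2$). The only cosmetic difference is that you partition by block index $j\ge 0$ versus $j=-1$, whereas the paper cuts each block at $|\xi|=1$; given the supports of $\phi_j$ these are equivalent.
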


\begin{proof}
Recall $\rho(\xi)=|\xi|^2/(1+|\xi|^2)$. By Lemma \ref{lem: DYZ} we have
\begin{align*}
\Vert \nu^{-\ell/2} \Delta_j f(t)\Vert_{L^2_{x.v}}^2 &= \int_{\mathbb{R}^d} \Vert \nu^{-\ell/2} \phi(2^{-j}\xi)\hat{f}(t,\xi)\Vert_{L^2_v}^2 d\xi\\
&\le C  \int_{\mathbb{R}^d} (1+\rho(\xi)t)^{-J} \Vert \nu^{-(\ell+J_+)/2} \phi(2^{-j}\xi) \hat{f}_0(\xi)\Vert_{L^2_v}^2 d\xi\\
&= C \left\{\int_{\vert \xi \vert \ge 1}+\int_{\vert \xi \vert \le 1}\right\} (\cdots) d\xi=:I_1+I_2.
\end{align*}
For $I_1$, we notice $1+\rho(\xi)t\sim 1+t$ on $\{\vert \xi\vert \ge 1\}$. Thus, one has  
\begin{align*}
I_1 \le C(1+t)^{-J} \Vert \nu^{-(\ell+J_+)/2} \Delta_j f_0\Vert_{L^2_{x,v}}^2.
\end{align*}
For $I_2$, we take the triplet $(q, p, p')$ satisfying $1/p+1/p'=1$ and $1/2p+1/q=1$. The H\"{o}lder inequality gives
\begin{align*}
I_2 &\le \Big( \int_{\vert \xi\vert \le 1} (1+\rho(\xi)t)^{-Jp'} d\xi \Big)^{1/p'} \Big( \int_{\vert \xi\vert \le 1} \Vert \nu^{-(\ell+J_+)/2}\phi(2^{-j}\xi)\hat{f}_0(\xi)\Vert_{L^2_v}^{2p} d\xi \Big)^{1/p}\\
&\le C \tilde{\psi}_{Jp'}(t)^{1/p'} \Big( \int_{\vert \xi\vert \le 1} \Vert \nu^{-(\ell+J_+)/2}\phi(2^{-j}\xi)\hat{f}_0(\xi)\Vert_{L^2_v}^{2p} d\xi \Big)^{1/p},
\end{align*}
where by the change of variable $r \rightarrow s=r^2t/(1+r^2)$, it holds that 
\begin{align*}
\tilde{\psi}_{Jp'}(t)&=\int^1_0 \Big(1+\frac{r^2}{1+r^2}t\Big)^{-Jp'}r^{d-1} dr = \int^{t/2}_0 (1+s)^{-Jp'}\Big(\frac{s}{t-s}\Big)^{(d-1)/2} \frac{t}{2\sqrt{s}}(t-s)^{-3/2} ds\\
&\le C(1+t)^{-d/2}  \int^{t/2}_0 (1+s)^{-Jp'}s^{d/2-1}ds  \le C(1+t)^{-d/2},
\end{align*}
due to $J>d(1/q-1/2)=d/2p'$. Therefore, combining estimates on $I_1$ and $I_2$, we have obtained
\begin{align*}
&\Vert \nu^{-\ell/2} \Delta_j f(t)\Vert_{L^2_{x.v}}^2 \le C(1+t)^{-J} \Vert \nu^{-(\ell+J_+)/2} \Delta_j f_0\Vert_{L^2_{x,v}}^2 \\
&\qquad\qquad\qquad\qquad\quad+ C(1+t)^{-2\alpha} \Big( \int_{\vert \xi\vert \le 1} \Vert \nu^{-(\ell+J_+)/2}\phi(2^{-j}\xi)\hat{f}_0(\xi)\Vert_{L^2_v}^{2p} d\xi \Big)^{1/p}.
\end{align*}
Taking the square root of the above inequality, further taking summation with respect to $j$ with the weight $2^{js}$, and noticing that 
\begin{align*}
\overline{\sum} \Big( \int_{\vert \xi\vert \le 1} \Vert \nu^{-(\ell+J_+)/2}\phi(2^{-j}\xi)\hat{f}_0(\xi)\Vert_{L^2_v}^{2p} d\xi \Big)^{1/2p}
\end{align*}
is bounded by $C \Vert \nu^{-(\ell+J_+)/2} f_0\Vert_{L^2_vL^q_x}$ by the same reason as in the proof of Lemma \ref{lem:time-decay of semigroup}, the desired estimate \eqref{ad.lem.td} then follows. This completes the proof of Lemma \ref{lem: Time-decay of velocity-weighted square integrable space}.  
\end{proof}

By Lemma \ref{lem: Time-decay of velocity-weighted square integrable space} together with the $L^2_v$-$L^\infty_v$ interpolation, one can further  derive the time-decay of solutions in the space $\tilde{L}^\infty_v (B^s_{2,1})$ with a suitable velocity weight.

\begin{lem}\label{lem: Time-decay of velocity-weighted infty space}
Assume $-d<\gamma<0$, $0<\beta <\vert \gamma\vert+2$, and $\beta'\ge 0$. Then the solution $f(t,x,v)$ to the Cauchy problem \eqref{eqn: Linearized Equation} with initial data $f_0(x,v)$ satisfies
\begin{align}\label{ad.lem.c0}
\Vert f(t)\Vert_{\tilde{L}^\infty_{\beta'} (B^s_{2,1})}\le C(1+t)^{\beta/\gamma}\left( \Vert f_0 \Vert_{\tilde{L}^\infty_{\beta+\beta'}(B^s_{2,1})}+|\!|\!| f|\!|\!|_{\beta/\vert\gamma\vert, \tilde{L}^2_v (B^s_{2,1})}\right),
\end{align}
for all $t\geq 0$.
\end{lem}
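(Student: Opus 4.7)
The plan is to Duhamel-decompose the solution to \eqref{eqn: Linearized Equation} as
\begin{align*}
f(t) = e^{tA}f_0 + \int_0^t e^{(t-s)A}Kf(s)\,ds,\qquad A = -v\cdot\nabla_x - \nu(v),
\end{align*}
and estimate each piece in $\tilde{L}^\infty_{\beta'}(B^s_{2,1})$ through an $L^2$--$L^\infty$ interplay that upgrades the $\tilde{L}^2_v$ decay encoded in $|\!|\!| f|\!|\!|_{\beta/|\gamma|,\tilde{L}^2_v(B^s_{2,1})}$ to a weighted $L^\infty_v$ decay. The whole rate $(1+t)^{\beta/\gamma}$ originates in the elementary pointwise inequality
\begin{align*}
\langle v\rangle^{-\beta}\,e^{-\nu(v)t}\le C(1+t)^{\beta/\gamma},\qquad v\in\mathbb{R}^d,\ t\ge 0,
\end{align*}
which holds for any $\beta>0$, $\gamma<0$: on $\langle v\rangle^{|\gamma|}\ge 1+t$ one has $\langle v\rangle^{-\beta}\le(1+t)^{\beta/\gamma}$ directly, while on $\langle v\rangle^{|\gamma|}\le 1+t$ one combines $\nu(v)\ge \nu_0\langle v\rangle^{\gamma}$ with $x^{\beta/|\gamma|}e^{-x}\le C$ applied at $x=\nu_0\langle v\rangle^{\gamma}t$.

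For the free-flow term, the identity $\|\Delta_j e^{tA}f_0(\cdot,v)\|_{L^2_x}=e^{-\nu(v)t}\|\Delta_j f_0(\cdot,v)\|_{L^2_x}$ together with the weight split $\langle v\rangle^{\beta'}=\langle v\rangle^{\beta+\beta'}\langle v\rangle^{-\beta}$ and the pointwise inequality above at once yields
\begin{align*}
\sup_v\langle v\rangle^{\beta'}\|\Delta_j e^{tA}f_0(\cdot,v)\|_{L^2_x}\le C(1+t)^{\beta/\gamma}\sup_v\langle v\rangle^{\beta+\beta'}\|\Delta_j f_0(\cdot,v)\|_{L^2_x},
\end{align*}
and a $\overline{\sum}$ against $2^{js}$ reproduces the first term in \eqref{ad.lem.c0}. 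For the Duhamel term, the key ingredient is a weighted kernel bound for $K$ that transforms an $L^2_v$ bound on the input into a weighted pointwise bound on the output with a gain of $\nu(v)$:
\begin{align*}
\langle v\rangle^{\beta+\beta'}|(Kg)(v)|\le C\,\nu(v)\,\|g\|_{L^2_v}.
\end{align*}
This follows from the representation $(Kg)(v)=\int k(v,\eta)g(\eta)\,d\eta$, Cauchy--Schwarz in $\eta$, and the Grad--Caflisch pointwise estimate on $k(v,\eta)$ for cutoff soft potentials; the hypothesis $\beta<|\gamma|+2$ is precisely what guarantees that the corresponding weighted $L^2_\eta$-moment of $k(v,\cdot)$ is a constant multiple of $\nu(v)^2$. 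Applied pointwise in $(s,x)$ to $g=\Delta_j f(s,x,\cdot)$, using $K\Delta_j=\Delta_jK$ and Minkowski's inequality in $x$, it delivers
\begin{align*}
\langle v\rangle^{\beta'}\|\Delta_jKf(s,\cdot,v)\|_{L^2_x}\le C\,\nu(v)\langle v\rangle^{-\beta}\,\|\Delta_j f(s)\|_{L^2_vL^2_x}.
\end{align*}

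Inserting the block bound $\|\Delta_j f(s)\|_{L^2_vL^2_x}\le c_j 2^{-js}(1+s)^{-\beta/|\gamma|}\,|\!|\!| f|\!|\!|_{\beta/|\gamma|,\tilde{L}^2_v(B^s_{2,1})}$ with $\sum_jc_j\le 1$, closing the estimate reduces to the time-convolution bound
\begin{align*}
\langle v\rangle^{-\beta}\int_0^t \nu(v)\,e^{-\nu(v)(t-s)}(1+s)^{-\beta/|\gamma|}\,ds\le C(1+t)^{-\beta/|\gamma|}
\end{align*}
uniformly in $v$. This is proved by the same dichotomy as above: on $\langle v\rangle^{|\gamma|}\ge 1+t$ the outer factor $\langle v\rangle^{-\beta}\le(1+t)^{-\beta/|\gamma|}$ already dominates while the inner integral is bounded by unity; on $\langle v\rangle^{|\gamma|}\le 1+t$ one splits the $s$-integral at $t/2$, using $(1+s)^{-\beta/|\gamma|}\le C(1+t)^{-\beta/|\gamma|}$ on $[t/2,t]$ and the elementary bound $\nu(v)e^{-\nu(v)t/2}\le C/t$ on $[0,t/2]$, supplemented by a single integration by parts in $s$ when $\beta/|\gamma|>1$. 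A final $\overline{\sum}$ upgrades the blockwise estimates into \eqref{ad.lem.c0}.

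The main obstacle is the weighted kernel bound $\langle v\rangle^{\beta+\beta'}|(Kg)(v)|\le C\nu(v)\|g\|_{L^2_v}$, which relies on the Grad--Caflisch pointwise analysis of $k(v,\eta)$ for cutoff soft potentials and pins down the restriction $\beta<|\gamma|+2$. The secondary delicate point is the uniform-in-$v$ time convolution estimate in the regime $\beta/|\gamma|>1$, where a naive splitting loses decay and an integration by parts (or, equivalently, one step of the double Duhamel iteration) is needed to recover the full rate $(1+t)^{-\beta/|\gamma|}$.
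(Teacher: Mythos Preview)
Your single-Duhamel strategy has a genuine gap at the weighted kernel bound
\[
\langle v\rangle^{\beta+\beta'}|(Kg)(v)|\le C\,\nu(v)\,\|g\|_{L^2_v}.
\]
By Cauchy--Schwarz this amounts to $\bigl(\int |k(v,\eta)|^2\,d\eta\bigr)^{1/2}\le C\langle v\rangle^{\gamma-\beta-\beta'}$, and since $\beta'\ge 0$ is arbitrary in the lemma, you would need $\int|k(v,\eta)|^2\,d\eta$ to decay faster than any polynomial in $|v|$. But the Grad--Caflisch kernel $k_2(v,\eta)$ carries only a polynomial factor $(1+|v|+|\eta|)^{\gamma-1}$ together with Gaussians that localise $\eta$ to $\{|\eta|\approx |v|,\ |\eta-v|=O(1)\}$; a direct computation (in $d=3$, say) gives $\bigl(\int |k_2(v,\eta)|^2\,d\eta\bigr)^{1/2}\sim \langle v\rangle^{\gamma-3/2}$, a fixed polynomial rate. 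Hence your bound fails as soon as $\beta+\beta'>3/2$, and in particular cannot hold for all $\beta'\ge 0$. Your claim that ``$\beta<|\gamma|+2$ is precisely what guarantees'' the weighted $L^2_\eta$-moment to be $C\nu(v)^2$ is therefore incorrect; the restriction $\beta<|\gamma|+2$ in the lemma has nothing to do with kernel decay in $v$ and enters only through the time-convolution estimates (it is equivalent to $\tilde\alpha:=\beta/|\gamma|<1-2/\gamma$, which ensures the decay $(1+\tau)^{2/\gamma-1}$ dominates $(1+s)^{-\tilde\alpha}$).

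The paper's proof handles the arbitrary $\beta'$ and the passage to $L^2_v$ quite differently. First, it absorbs the $\beta'$-weight into the kernel by setting $h=\langle v\rangle^{\beta'}f$ and working with $k_{\beta'}(v,\eta)=k(v,\eta)\langle v\rangle^{\beta'}/\langle\eta\rangle^{\beta'}$; because the Gaussians force $|\eta|\approx|v|$, the ratio $\langle v\rangle^{\beta'}/\langle\eta\rangle^{\beta'}$ stays bounded and the $L^1_\eta$-bound $\int|k_{\beta'}(v,\eta)|\,d\eta\le C\nu(v)\langle v\rangle^{-2}$ holds uniformly in $\beta'$. Second, it splits $K_{\beta'}=K^m_{\beta'}+K^c_{\beta'}$ and iterates the Duhamel formula once more: after the double iteration, the only place where $\|\Delta_j f(\tau)\|_{L^2_{x,v}}$ is invoked is on a \emph{compact} $v''$-region $\{|v''|\le 3N\}$, where no velocity weight is lost in passing from $h$ to $f$. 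All other pieces are bounded by $|\!|\!| h|\!|\!|_{\tilde\alpha,\tilde L^\infty_0(B^s_{2,1})}$ itself, but with coefficients carrying small factors ($m^{d+\gamma}$, $N^{-\delta|\gamma|}$, $\lambda$, $e^{-N^2/20}$, $N^{-1}$), so the argument closes by a bootstrap. Your single application of $K$ with a direct $L^2_v\to L^\infty_{\beta+\beta'}$ bound tries to bypass both mechanisms at once, and that is exactly where it breaks.
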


\begin{proof}
We shall follow the proof of \cite[Lemma 4.5]{DHWY}. 
First, due to $L=\nu-K$, we write the linearized equation in the form of 
\begin{align*}
\partial_t f +v\cdot \nabla_x f+\nu f= Kf.
\end{align*}
Define $h(t,x,v)=\langle v\rangle^{\beta'}f(t,x,v)$. Then the equation for $h$ reads
\begin{align*}
\partial_t h + v\cdot \nabla_x h+\nu h= K_{\beta'}h,
\end{align*}
where we have defined 
\begin{align*}
K_{\beta'}(h)(v)=\langle v\rangle^{\beta'}\ K\Big( \frac{h}{\langle \cdot\rangle^{\beta'}}\Big) (v)=\int_{\mathbb{R}^d} k_{\beta'}(v,v')h(v')dv',
\end{align*}
with a new integral kernel $k_{\beta'}(v,v')=k(v,v')\langle v\rangle^{\beta'}/\langle v'\rangle^{\beta'}$.
Therefore, to show the desired estimate \eqref{ad.lem.c0} it suffices to prove
\begin{align}\label{ineq: Weighted Estimate of h}
\Vert h(t)\Vert_{\tilde{L}^\infty_0 (B^s_{2,1})}\le C(1+t)^{\beta/\gamma}\Big( \Vert h_0 \Vert_{\tilde{L}^\infty_{\beta}(B^s_{2,1})}+|\!|\!| f|\!|\!|_{\beta/\vert\gamma\vert, \tilde{L}^2_v (B^s_{2,1})}\Big),
\end{align}
for all $t\geq 0$. Indeed, the mild form of the equation for $h$ is written as 
\begin{align}\label{a.mf}
h(t,x,v)=e^{-\nu(v)t}h_0(x-vt,v)+ \int^t_0 e^{-\nu(v)(t-s)} (K^m_{\beta'}+K^c_{\beta'})h(s, x-(t-s)v,v)ds,
\end{align}
where we have denoted 
$$
(K^m_{\beta'} h)(v)=\int k_{\beta'}^m(v,v_*)\chi_m(\vert v-v_*\vert)h(v_*)dv_*
$$ 
with
\begin{align*}
0\le \chi_m \le 1, \chi_m(t)=1\ \mathrm{for}\ t\le m,\ \chi_m(t)=0\ \mathrm{for}\ t\ge 2m,
\end{align*}
and $K^c_{\beta'}=K_{\beta'}-K^m_{\beta'}$. The small constant $m>0$ will be chosen later.
Applying $\Delta_j$ to \eqref{a.mf} and taking the $L^2_x$-norm, we have
\begin{align}
\Vert \Delta_j h(t,v)\Vert_{L^2_x} &\le e^{-\nu(v)t}\Vert \Delta_j h_0 (v) \Vert_{L^2_x} + \int^t_0 \Vert \Delta_j (K^m_{\beta'} h)(s,v)\Vert_{L^2_x}ds+\int^t_0 \Vert \Delta_j (K^c_{\beta'} h)(s,v)\Vert_{L^2_x}ds\notag\\
&=:L^j_1+L^j_2+L^j_3.\label{a.L123}
\end{align}
To the end, for brevity we put $\tilde{\alpha}=\beta/\vert \gamma\vert>0$ and 
$$
|\!|\!| h|\!|\!|=|\!|\!| h|\!|\!|_{\beta/\vert\gamma\vert, \tilde{L}^\infty_v (B^s_{2,1})}=|\!|\!| h|\!|\!|_{\tilde{\alpha}, \tilde{L}^\infty_v (B^s_{2,1})}.
$$
Notice $0<\tilde{\alpha}< 1-2/\gamma$.


Before starting the estimates on $L^j_k$ $(k=1,2,3)$ in \eqref{a.L123}, we recall some useful facts for $K^m_{\beta'}$ and $K^c_{\beta'}$ in the following lemma, cf.~\cite{DHWY}.

\begin{lem}\label{lem: DHWY}
It holds that
\begin{align}
\vert (K^m_{\beta'} g)(v)\vert &\le C m^{d+\gamma}e^{-\vert v\vert^2/10}\Vert g\Vert_{L^\infty}, \label{ineq: Km}\\
(K^c_{\beta'} g)(v) &=\int_{\mathbb{R}^d} \ell_{\beta'}^c (v,\eta)g(\eta)d\eta,\notag
\end{align}
where the integral kernel $\ell_{\beta'}^c:=\ell^c(v,v')\langle v\rangle^{\beta'}/\langle v'\rangle^{\beta'}$ satisfies
\begin{align}
&\int_{\mathbb{R}^d}\vert \ell_{\beta'}^c(v,\eta)\vert d\eta \le C_\gamma m^{\gamma-1}\frac{\nu(v)}{1+\vert v\vert^2} ,\label{ineq: Kc1}\\
&\int_{\mathbb{R}^d}\vert \ell_{\beta'}^c (v,\eta)\vert e^{-\vert \eta\vert^2/20}d\eta\le C e^{-\vert v\vert^2/100}. \label{ineq: Kc2}
\end{align}
\end{lem}

Now, since it holds that $x^a e^{-x}\le C_a$ on $\{x\ge 0\}$ for each $a\ge 0$, we have
\begin{align*}
\overline{\sum} L^j_1 &\le C \overline{\sum}e^{-\langle v \rangle^\gamma t }\langle v\rangle^{-\beta} \cdot \langle v\rangle^\beta \Vert \Delta_j h_0(v)\Vert_{L^2_x}\\
&\le C\overline{\sum}e^{-\langle v \rangle^\gamma t }(\langle v\rangle^\gamma t)^{\tilde{\alpha}}\  t^{-\tilde{\alpha}} \Vert \Delta_j h_0(v)\Vert_{L^\infty_\beta L^2_x} \le C (1+t)^{-\tilde{\alpha}}\Vert h_0\Vert_{\tilde{L}^\infty_\beta (B^s_{2,1})}.
\end{align*}
By \eqref{ineq: Km} it holds that 
\begin{align*}
\vert (K^m_{\beta'} 1)(v)\vert \le C m^{d+\gamma}e^{-\vert v\vert^2/10}\langle v \rangle^{\beta'} \le C_{\beta'} m^{d+\gamma}e^{-\vert v\vert^2/15}.
\end{align*}
Thus we have
\begin{align*}
\overline{\sum} L^j_2 
&\le C_{\beta'}m^{d+\gamma} e^{-\vert v\vert^2/15}|\!|\!| h|\!|\!| \int^t_0 e^{-\nu(v)(t-s)} (1+s)^{-\tilde{\alpha}} ds \\
&\le C_{\beta'}m^{d+\gamma} e^{-\vert v\vert^2/20}|\!|\!| h|\!|\!| \int^t_0 (1+t-s)^{-\tilde{\alpha}-1} (1+s)^{-\tilde{\alpha}} ds \\
&\le C_{\beta'}m^{d+\gamma} e^{-\vert v\vert^2/20}|\!|\!| h|\!|\!| (1+t)^{-\tilde{\alpha}},
\end{align*}
where we have used the inequality $e^{-\vert v\vert^2/10}e^{-\nu(v)(t-s)}\le C_b e^{-\vert v\vert^2/20}(1+t-s)^{-b}$ for $b\ge 0$. This then completes the estimates on $L^j_1$ and $L^j_2$. Furthermore, by substituting those estimates into $L^j_3$, one has
\begin{align*}
L^j_3 &\le \int^t_0 e^{-\nu(v) (t-s)} \int_{\mathbb{R}^d} \vert \ell_{\beta'}^c (v,v')\vert \Big[ e^{-\nu(v')s}\Vert \Delta_j h_0 (v')\Vert_{L^2_x} \\
&\qquad\qquad\qquad\qquad+ \int^s_0 e^{-\nu(v')(s-\tau)} \Vert \Delta_j (K^m_{\beta'} h)(\tau, v')\Vert_{L^2_x} d\tau \Big] dv'ds\\
&\quad+ \int^t_0 e^{-\nu(v)(t-s)} \int^s_0 e^{-\nu(v')(s-\tau)}  \int_{\mathbb{R}^d\times \mathbb{R}^d} \vert \ell_{\beta'}^c (v,v') \ell_{\beta'}^c (v', v'')\vert  \Vert \Delta_j h(\tau, v'')\Vert_{L^2_x}dv''dv'd\tau ds\\
&=:L^j_{31}+L^j_{32}+L^j_{33}.
\end{align*}
Here $L^j_{31}$ and $L^j_{32}$ can be similarly estimated as $L^j_1$ and $L^j_2$, respectively. In fact, it follows from \eqref{ineq: Kc1} that 
\begin{align*}
\overline{\sum} L^j_{31} &\le \Vert h_0\Vert_{\tilde{L}^\infty_\beta (B^s_{2,1})}\int^t_0 \int_{\mathbb{R}^d} e^{-\nu(v)(t-s)-\nu(v')s} \langle v' \rangle^{-\beta} \vert \ell_{\beta'}^c (v,v')\vert dv'ds\\
&\le C\Vert h_0\Vert_{\tilde{L}^\infty_\beta (B^s_{2,1})}\int^t_0 \int_{\mathbb{R}^d} e^{-\nu(v)(t-s)}(1+s)^{-\tilde{\alpha}} \vert \ell_{\beta'}^c (v,v')\vert dv'ds\\
&\le C_\gamma m^{\gamma -1}\Vert h_0\Vert_{\tilde{L}^\infty_\beta (B^s_{2,1})}\int^t_0 e^{-\nu(v)(t-s)}(1+s)^{-\tilde{\alpha}}\frac{\nu(v)}{1+\vert v\vert^2}ds\\
&\le C_\gamma m^{\gamma -1}\Vert h_0\Vert_{\tilde{L}^\infty_\beta (B^s_{2,1})}\int^t_0 (1+t-s)^{-1+2/\gamma}(1+s)^{-\tilde{\alpha}}ds\\
&\le C_\gamma m^{\gamma -1}\Vert h_0\Vert_{\tilde{L}^\infty_\beta (B^s_{2,1})}(1+t)^{-\tilde{\alpha}}.
\end{align*}
And, by \eqref{ineq: Km} and \eqref{ineq: Kc2}, one has
\begin{align*}
&\overline{\sum} L^j_{32} \le Cm^{d+\gamma}\overline{\sum}\int^t_0\int^s_0 \int_{\mathbb{R}^d} e^{-\nu(v)(t-s)-\nu(v')s} e^{-\vert v'\vert^2/10} \Vert \Delta_j h(\tau)\Vert_{L^\infty_v L^2_x} \vert \ell_{\beta'}^c (v,v')\vert dv'd\tau ds\\
&\le Cm^{d+\gamma} |\!|\!| h|\!|\!| \int^t_0\int^s_0 \int_{\mathbb{R}^d} e^{-\nu(v)(t-s)}(1+s-\tau)^{-1-\tilde{\alpha}}  e^{-\vert v'\vert^2/20}(1+\tau)^{-\tilde{\alpha}}\vert \ell_{\beta'}^c (v,v')\vert dv'd\tau ds\\
&\le C_\gamma m^{d+\gamma} |\!|\!| h|\!|\!| \int^t_0\int^s_0 e^{-\nu(v)(t-s)}e^{-\vert v\vert^2/100}(1+s-\tau)^{-1-\tilde{\alpha}}(1+\tau)^{-\tilde{\alpha}} d\tau ds\\
&\le C_\gamma m^{d+\gamma} |\!|\!| h|\!|\!| \int^t_0\int^s_0 (1+t-s)^{-1-\tilde{\alpha}}(1+s-\tau)^{-1-\tilde{\alpha}} (1+\tau)^{-\tilde{\alpha}} d\tau ds\\
&\le C_\gamma m^{d+\gamma} |\!|\!| h|\!|\!| (1+t)^{-\tilde{\alpha}}.
\end{align*}
To estimate $L^j_{33}$, we divide it by three cases.
First, if $\vert v \vert \ge N$, it holds that 
\begin{align*}
&\overline{\sum} \int^t_0  \int^s_0 \int_{\mathbb{R}^d\times \mathbb{R}^d} e^{-\nu(v)(t-s)}e^{-\nu(v')(s-\tau)} \vert \ell_{\beta'}^c (v,v') \ell_{\beta'}^c (v', v'')\vert  \Vert \Delta_j h(\tau)\Vert_{L^\infty_v L^2_x}dv''dv'd\tau ds\\
&\le C_\gamma m^{\gamma-1} |\!|\!| h|\!|\!| \int^t_0  \int^s_0 \int_{\mathbb{R}^d} e^{-\nu(v)(t-s)}e^{-\nu(v')(s-\tau)}(1+\tau)^{-\tilde{\alpha}} \frac{\nu(v')}{1+\vert v'\vert^2}  \vert \ell_{\beta'}^c (v,v')\vert dv'd\tau ds\\
&\le C_\gamma m^{\gamma-1} |\!|\!| h|\!|\!| \int^t_0 \int^s_0 e^{-\nu(v)(t-s)}(1+s-\tau)^{-1+2/\gamma}(1+\tau)^{-\tilde{\alpha}} \frac{\nu(v)}{1+\vert v\vert^2} d\tau ds\\
&\le C_\gamma m^{\gamma-1}N^{-\delta \vert \gamma\vert}|\!|\!| h|\!|\!| \int^t_0 \int^s_0 (1+t-s)^{-1+2/\gamma+\delta}(1+s-\tau)^{-1+2/\gamma}(1+\tau)^{-\tilde{\alpha}} d\tau ds\\
&\le C_\gamma m^{\gamma-1}N^{-\delta \vert \gamma\vert} |\!|\!| h|\!|\!| (1+t)^{-\tilde{\alpha}}.
\end{align*}
Here, $\delta>0$ is a suitably small constant such that  both $0< \tilde{\alpha}\le 1-2/\gamma-\delta$ and $1-2/\gamma -\delta>1$ hold true. Notice that such a constant $\delta>0$ exists by the assumptions of Lemma \ref{lem: Time-decay of velocity-weighted infty space}.

The second case is to consider either $\{\vert v\vert\le N$, $\vert v'\vert\ge 2N\}$ or $\{\vert v'\vert\le 2N$, $\vert v''\vert\ge 3N\}$. For simplicity we only consider the former one since the proof for the latter one is almost the same. Recall that
\begin{align*}
\vert \ell_{\beta'}^c(v,v')\vert \le C e^{-N^2/20} \vert \ell_{\beta'}^c(v,v')\vert e^{\vert v-v'\vert^2/20},\\
\int_{\mathbb{R}^d}\vert \ell_{\beta'}^c(v,v')\vert e^{\vert v-v'\vert^2/20}dv' \le C_m \frac{\nu(v)}{1+\vert v\vert^2},
\end{align*}
where the second estimate has been shown in  \cite{DHWY}. Therefore, $\sum 2^{js}L^j_{33}$ is bounded by 
\begin{align*}
&C_\gamma m^{\gamma-1} \overline{\sum} \int^t_0  \int^s_0 \int_{\mathbb{R}^d} e^{-\nu(v)(t-s)}e^{-\nu(v')(s-\tau)} \vert \ell_{\beta'}^c (v,v')\vert \frac{\nu(v')}{1+\vert v'\vert^2}\Vert \Delta_j h(\tau)\Vert_{L^\infty_v L^2_x} dv'd\tau ds\\
&\le C_\gamma m^{\gamma-1} |\!|\!| h|\!|\!| \int^t_0  \int^s_0 e^{-\nu(v)(t-s)} e^{-N^2/20} \frac{\nu(v)}{1+\vert v\vert^2}(1+s-\tau)^{-1+2/\gamma}(1+\tau)^{-\tilde{\alpha}}d\tau ds\\
&\le C_\gamma m^{\gamma-1}e^{-N^2/20} |\!|\!| h|\!|\!|(1+t)^{-\tilde{\alpha}}.
\end{align*}

Third, if $\vert v\vert\le N$, $\vert v'\vert\le 2N$, and $\vert v''\vert\le 3N$, then we take a small constant $\lambda >0$ to be chosen later. We divide the $\tau$-integration into two parts $\int^s_0=\int^s_{s-\lambda}+\int^{s-\lambda}_0$. For the first integral $\int^s_{s-\lambda}$, we notice
\begin{align*}
\int^s_{s-\lambda}e^{-\nu(v')(s-\tau)}(1+\tau)^{-\tilde{\alpha}}d\tau \le C\lambda (1+s)^{-\tilde{\alpha}},
\end{align*}
where $C$ is independent of $\lambda$. 
 Therefore, $\sum 2^{js}L^j_{33}$ is correspondingly dominated by
\begin{align*}
&C\lambda |\!|\!| h|\!|\!| \int^t_0\int_{\vert v'\vert\le 2N, \vert v''\vert \le 3N} (1+s)^{-\tilde{\alpha}} e^{-\nu(v)(t-s)}  \vert \ell_{\beta'}^c (v,v')\ell_{\beta'}^c (v', v'')\vert dv'' dv'ds\\
&\le C_\gamma m^{2-2\gamma}\lambda |\!|\!| h|\!|\!|\int^t_0  e^{-\nu(v)(t-s)} \Big(\frac{\nu(v)}{1+\vert v\vert^2}\Big)^2 (1+s)^{-\tilde{\alpha}}ds\\
&\le C_\gamma m^{2-2\gamma}\lambda  |\!|\!| h|\!|\!|\int^t_0 (1+t-s)^{-2(1-2/\gamma)} (1+s)^{-\tilde{\alpha}}ds\\
&\le C_\gamma m^{2-2\gamma}\lambda  |\!|\!| h|\!|\!|(1+t)^{\beta/\gamma},
\end{align*}
where the estimate \eqref{ineq: Kc1} in Lemma \ref{lem: DHWY} has been used twice in the first inequality. For the second integral $\int^{s-\lambda}_0$, we notice that one can take $\tilde{\ell}_{\beta', N} \in C^\infty_0(\mathbb{R}^d\times \mathbb{R}^d)$ satisfying 
\begin{align*}
\sup_{\vert p\vert\le 3N} \int_{\vert v'\vert\le 3N} \vert \ell_{\beta'}^c(p,v')-\tilde{\ell}_{\beta', N} (p,v')\vert dv' \le C_m N^{\gamma-1}.
\end{align*}
With this approximation function, we decompose the product $\ell_{\beta'}^c (v,v')\ell_{\beta'}^c(v',v'')$ into
\begin{align*}
\ell_{\beta'}^c (v,v')\ell_{\beta'}^c(v',v'') &= (\ell_{\beta'}^c (v,v')-\tilde{\ell}_{\beta',N}(v,v'))\ell_{\beta'}^c (v',v'')\\
&+(\ell_{\beta'}^c (v',v'')-\tilde{\ell}_{\beta', N}(v',v''))\tilde{\ell}_{\beta', N} (v,v')\\
&+ \tilde{\ell}_{\beta', N}(v,v')\tilde{\ell}_{\beta', N}(v',v'').
\end{align*}
The integral with the kernel $(\ell_{\beta'} (v,v')-\tilde{\ell}_{\beta',N}(v,v'))\ell_{\beta'} (v',v'')$ is bounded by
\begin{align*}
& |\!|\!| h|\!|\!|\int^t_0 \int^{s-\lambda}_0 \int_{\vert v'\vert \le 2N, \vert v''\vert\le 3N}(1+\tau)^{-\tilde{\alpha}} e^{-\nu(v) (t-s)}e^{-\nu(v')(s-\tau)}\\ 
&\qquad\qquad\qquad\qquad\times \vert \ell_{\beta'}^c (v,v')-\tilde{\ell}_{\beta',N}(v,v')\vert \vert\ell_{\beta'} (v',v'')\vert dv''dv'd\tau ds\\
&\le  C_\gamma m^{\gamma-1}|\!|\!| h|\!|\!| \int^t_0 \int^{s-\lambda}_0 \int_{\vert v'\vert \le 2N} (1+\tau)^{-\tilde{\alpha}} e^{-\nu(v) (t-s)}e^{-\nu(v')(s-\tau)} \\
&\qquad\qquad\qquad\qquad\times \frac{\nu(v')}{1+\vert v'\vert^2} \vert\ell_{\beta'}^c (v,v')-\tilde{\ell}_{\beta', N}(v,v')\vert dv'd\tau ds\\
&\le C_{\gamma} m^{\gamma-1}N^{\gamma-1}|\!|\!| h|\!|\!| \int^t_0 \int^{s-\lambda}_0 e^{-N^\gamma (t-s)} (1+s)^{-\tilde{\alpha}} (1+s-\tau)^{-1+2/\gamma} d\tau ds\\
&\le C_{\gamma} m^{\gamma-1}N^{-1}|\!|\!| h|\!|\!| (1+t)^{-\tilde{\alpha}},
\end{align*}
where we have used the fact that $\nu(v) \geq  cN^\gamma$ if $\vert v\vert\le N$. The estimate on the second term is similar and simpler, because $\tilde{\ell}_{\beta', N}(v,v')$ is not singular.
Also, in terms of boundedness of $\tilde{\ell}_{\beta', N}$ and the Cauchy-Schwarz inequality, we have
\begin{align*}
&C_N\overline{\sum} \int^t_0 \int^{s-\lambda}_0 \int_{\vert v'\vert \le 2N, \vert v''\vert\le 3N} e^{-\nu(v) (t-s)}e^{-\nu(v')(s-\tau)} \\
&\qquad\qquad\qquad\qquad\qquad \times \vert \tilde{\ell}_{\beta', N}(v,v')\tilde{\ell}_{\beta', N}(v',v'')\vert\Vert \Delta_j h(\tau, v'')\Vert_{L^2_x}dv'' dv' d\tau ds\\
&\le C_N \overline{\sum} \int^t_0 \int^{s-\lambda}_0 e^{-cN^\gamma (t-s)} e^{-cN^\gamma (s-\tau)}\Vert \Delta_j f(\tau) \Vert_{L^2_{x,v}}d\tau ds\\
&\le C_N |\!|\!| f|\!|\!|_{\tilde{\alpha}, \tilde{L}^2_v (B^s_{2,1})} \int^t_0 \int^{s-\lambda}_0 e^{-cN^\gamma (t-s)} e^{-cN^\gamma (s-\tau)}(1+\tau)^{-\tilde{\alpha}} d\tau ds\\
&\le C_{\gamma,N} |\!|\!| f|\!|\!|_{\tilde{\alpha}, \tilde{L}^2_v (B^s_{2,1})} (1+t)^{-\tilde{\alpha}}.
\end{align*}
Here, once again we have used  the fact that $\nu(v)$, $\nu(v')\geq  cN^\gamma$ if $\vert v\vert\le N$ and $\vert v'\vert \le 2N$. Also, boundedness of the integral domain has reduced the $L^2_v$-estimate of $h(v)=\langle v\rangle^{\beta'}f(v)$ to that of $f(v)$.

Finally, summing up all the above estimates, we obtain
\begin{align*}
\Vert \Delta_j h(t,v)&\Vert_{L^2_x} \le C (1+t)^{-\tilde{\alpha}} \big(\Vert h_0\Vert_{\tilde{L}^\infty_0 (B^s_{2,1})}+ |\!|\!| f|\!|\!|_{\tilde{\alpha}, \tilde{L}^2_v (B^s_{2,1})} \big)\\ 
&+C' (1+t)^{-\tilde{\alpha}}\big( m^{2-2\gamma}\lambda+m^{\gamma-1}(N^{-1}+e^{-N^2/20}+N^{-\delta\vert \gamma\vert})+m^{d+\gamma}\big)  |\!|\!| h|\!|\!|,
\end{align*}
where $C=C(\gamma, m, N)>0$ and $C'>0$ is independent of $(\gamma, m, N)$.
Now, by taking first $m>0$ small, next $\lambda>0$ sufficiently small, and then $N>0$ sufficiently large, we then derive the desired estimate \eqref{ineq: Weighted Estimate of h}. This completes the proof of Lemma \ref{lem: Time-decay of velocity-weighted infty space}. 
\end{proof}

Combining Lemma \ref{lem: Time-decay of velocity-weighted square integrable space} and Lemma \ref{lem: Time-decay of velocity-weighted infty space} immediately yields the following

\begin{cor}\label{cor: Time-decay of the linearized solution for soft}
Let $q\in [1,2]$, $\beta \ge 0$, and  $\alpha = d/2(1/q-1/2)$. 
 Then the solution $f(t,x,v)$ to the linearized Cauchy problem \eqref{eqn: Linearized Equation} with initial data $f_0(x,v)$ satisfies
\begin{align*}
\Vert f\Vert_{\tilde{L}^\infty_{\beta} (B^s_{2,1})} \le C(1+t)^{-\alpha} \Big( \Vert f_0\Vert_{\tilde{L}^\infty_{\alpha\vert\gamma\vert+\beta} (B^s_{2,1})} +\Vert \nu^{-\alpha_+} f_0 \Vert_{\tilde{L}^2_v (B^s_{2,1})} + \Vert \nu^{-\alpha_+} f_0 \Vert_{L^2_vL^q_x}\Big).
\end{align*}
\end{cor}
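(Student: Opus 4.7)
The corollary is a direct composition of the two preceding lemmas: Lemma \ref{lem: Time-decay of velocity-weighted infty space} produces $\tilde{L}^\infty_\beta(B^s_{2,1})$-decay at the cost of introducing a norm of $f(t)$ itself in $\tilde{L}^2_v(B^s_{2,1})$, while Lemma \ref{lem: Time-decay of velocity-weighted square integrable space} controls exactly that quantity in terms of initial data. So the plan is to chain the two estimates and verify that the decay rates and velocity weights match those declared in the corollary.

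First, I would apply Lemma \ref{lem: Time-decay of velocity-weighted infty space} with the role of $\beta'$ there played by the $\beta$ of the corollary, and with the role of its $\beta$ chosen to be $\alpha|\gamma|$. Since $\beta/\gamma = -\alpha$, this selection yields precisely the target decay rate $(1+t)^{-\alpha}$, and the resulting right-hand side weight $\beta+\beta'$ collapses to $\alpha|\gamma|+\beta$, which matches the weight appearing on the first term in the corollary. This delivers the bound
\begin{align*}
\Vert f(t)\Vert_{\tilde{L}^\infty_\beta(B^s_{2,1})} \le C(1+t)^{-\alpha}\Bigl(\Vert f_0\Vert_{\tilde{L}^\infty_{\alpha|\gamma|+\beta}(B^s_{2,1})} + |\!|\!| f|\!|\!|_{\alpha,\tilde{L}^2_v(B^s_{2,1})}\Bigr).
\end{align*}
At this point, the only remaining task is to estimate $|\!|\!| f|\!|\!|_{\alpha,\tilde{L}^2_v(B^s_{2,1})}$ by initial data.

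For the second step I would invoke Lemma \ref{lem: Time-decay of velocity-weighted square integrable space} with $\ell=0$ and a choice $J$ slightly larger than $2\alpha$ (this is allowed since the lemma requires only $J>2\alpha$). The two resulting decay rates $(1+t)^{-J/2}$ and $(1+t)^{-\alpha}$ are then both at least $(1+t)^{-\alpha}$, so after multiplying by $(1+t)^{\alpha}$ and taking $\sup_{t\ge 0}$, both terms become bounded. A small enough excess above $2\alpha$ can be absorbed into the notation $\alpha_+$, so that $J_+/2=\alpha_+$, giving
\begin{align*}
|\!|\!| f|\!|\!|_{\alpha,\tilde{L}^2_v(B^s_{2,1})}\le C\bigl(\Vert \nu^{-\alpha_+}f_0\Vert_{\tilde{L}^2_v(B^s_{2,1})}+\Vert \nu^{-\alpha_+}f_0\Vert_{L^2_vL^q_x}\bigr).
\end{align*}
Substituting this into the previous display yields exactly the claimed inequality.

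The proof itself should be essentially mechanical once one has decided how to align the indices; there is no genuine technical obstacle beyond careful bookkeeping. The one point that warrants a brief sanity check is that the choice $\beta=\alpha|\gamma|$ is admissible in Lemma \ref{lem: Time-decay of velocity-weighted infty space}, which requires $0<\alpha|\gamma|<|\gamma|+2$; this is consistent with the regimes used in Theorem \ref{thm: Solution for the soft potential case} (where $\alpha=d/4$ with $d\ge 3$) and can simply be noted in passing. Apart from this, the argument is a clean two-step composition.
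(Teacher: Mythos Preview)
Your proposal is correct and matches the paper's approach exactly: the paper states only that the corollary ``immediately yields'' from combining Lemma \ref{lem: Time-decay of velocity-weighted square integrable space} and Lemma \ref{lem: Time-decay of velocity-weighted infty space}, and your two-step chaining with the parameter choices $\beta'=\beta$, $\beta=\alpha|\gamma|$ in the first lemma and $\ell=0$, $J=2\alpha_+$ in the second is precisely the intended argument.
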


We shall apply the preceding statements for the linear problem to the nonlinear one.

\begin{thm}\label{thm: Time-Decay of the nonlinear solution for soft potential}
Assume $s\ge d/2$, $q =1$, $\beta\ge 0$ and $\beta > (1-\alpha/2)\gamma + d/2=(1-d/4)\gamma+d/2$. Then the solution $f(t,x,v)$ to the mild form of the Cauchy problem on the nonlinear Boltzmann equation
\begin{align*}
f(t)=e^{tB}f_0 +\int^t_0 e^{(t-s)B} \Gamma (f,f)(s) ds
\end{align*}
enjoys the following estimate:
\begin{align}
\Vert f(t)\Vert_{\tilde{L}^\infty_\beta (B^s_{2,1})} &\le C(1+t)^{-d/4} \Vert f_0\Vert_{\tilde{L}^\infty_{(\beta+d\vert \gamma \vert /4)}(B^s_{2,1})\cap \tilde{L}^2_{(d\vert\gamma\vert/4)_+}(B^s_{2,1})\cap L^2_{((d\vert\gamma\vert//4)_+} L^1_x}\notag\\
&+C(1+t)^{-d/4} |\!|\!| f|\!|\!|_{d/4, \tilde{L}^\infty_\beta (B^s_{2,1})}^2.\label{ad.thm.e1}
\end{align}
\end{thm}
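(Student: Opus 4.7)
The plan is to combine Corollary \ref{cor: Time-decay of the linearized solution for soft} with bilinear estimates on $\Gamma(f,f)$ through the Duhamel decomposition. Write
\[
f(t)=e^{tB}f_0+\int_0^t e^{(t-s)B}\Gamma(f,f)(s)\,ds,
\]
and estimate each piece in $\tilde{L}^\infty_\beta(B^s_{2,1})$ separately. For the linear piece, a direct application of Corollary \ref{cor: Time-decay of the linearized solution for soft} with $q=1$ and $\alpha=d/4$ produces exactly the first term on the right-hand side of \eqref{ad.thm.e1}.

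For the Duhamel integral, I would freeze $s\in[0,t]$, view $\Gamma(f,f)(s)$ as an initial datum, and apply Corollary \ref{cor: Time-decay of the linearized solution for soft} on the interval $[s,t]$ to obtain
\[
\|e^{(t-s)B}\Gamma(f,f)(s)\|_{\tilde{L}^\infty_\beta(B^s_{2,1})}\le C(1+t-s)^{-d/4}\,N(\Gamma(f,f)(s)),
\]
where $N(\cdot)$ is the triple-intersection norm on the right-hand side of Corollary \ref{cor: Time-decay of the linearized solution for soft}, involving $\tilde{L}^\infty_{\beta+d|\gamma|/4}(B^s_{2,1})$, $\tilde{L}^2_{(d|\gamma|/4)_+}(B^s_{2,1})$ and $L^2_{(d|\gamma|/4)_+}L^1_x$. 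The next step is to prove the bilinear bound $N(\Gamma(f,f))\le C\|f\|_{\tilde{L}^\infty_\beta(B^s_{2,1})}^2$, which is obtained by adapting the Bony-type decomposition of the proof of Lemma \ref{lem:bilinear estimate of Gamma} with the additional outer weight $\langle v\rangle^{(d|\gamma|/4)_+}$. The crucial pointwise input for each $\Gamma^k_{loss}$ piece is Lemma \ref{equiv: v_* Integral}, which after convolution against $|v-v_*|^\gamma M_*^{1/2}$ supplies the factor $\langle v\rangle^{\gamma}$; the gain terms are handled using the post-collisional inequality $\langle v'\rangle\langle v'_*\rangle\ge\langle v\rangle$ from \eqref{ineq: Pre-Post Bessel Potential}. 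The weight hypothesis $\beta>(1-d/4)\gamma+d/2$ is exactly what makes the resulting integrals of the form $\int\langle v\rangle^{2(d|\gamma|/4)_++2\gamma-2\beta}\,dv$ converge, ensuring the $\tilde{L}^2$ and $L^2_vL^1_x$ components of $N$ are finite; for the $L^2_vL^1_x$ component one additionally uses Minkowski's inequality in $v_*$ and the Besov embedding $B^s_{2,1}\hookrightarrow L^\infty_x$ valid for $s\ge d/2$.

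Having the bilinear estimate in hand, one may invoke the definition of the triple-norm to obtain $\|f(s)\|_{\tilde{L}^\infty_\beta(B^s_{2,1})}^2\le(1+s)^{-d/2}|\!|\!|f|\!|\!|^2_{d/4,\tilde{L}^\infty_\beta(B^s_{2,1})}$, so that the Duhamel contribution collapses to
\[
C\,|\!|\!|f|\!|\!|^2_{d/4,\tilde{L}^\infty_\beta(B^s_{2,1})}\int_0^t(1+t-s)^{-d/4}(1+s)^{-d/2}\,ds.
\]
Splitting the time integral at $s=t/2$, the half on $[0,t/2]$ is bounded by $C(1+t)^{-d/4}\int_0^\infty(1+s)^{-d/2}ds\lesssim(1+t)^{-d/4}$ thanks to $d/2>1$ for $d\ge 3$, while the half on $[t/2,t]$ is absorbed into $C(1+t)^{-d/4}$ after the elementary computation. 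This produces the second term of \eqref{ad.thm.e1}, and the condition $d\ge 3$ enters precisely here, as previously flagged in the fifth remark of Subsection \ref{sub1.3}.

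The step I expect to be the main obstacle is the weighted bilinear estimate in $\tilde{L}^\infty_{\beta+d|\gamma|/4}(B^s_{2,1})$: one must carefully balance the outer velocity weight against the $\langle v\rangle^{\gamma}$ decay produced by the collision convolution, while simultaneously managing the $\ell^1$ Chemin-Lerner sum over frequencies obtained through the Bony decomposition. This is the soft-potential counterpart of Corollary \ref{cor:CL estimate of Gamma}, with the extra velocity-weight budget $(d|\gamma|/4)_+$ absorbed through the hypothesis $\beta>(1-d/4)\gamma+d/2$; once this weighted bilinear estimate is secured, the rest of the argument is standard time-decay bookkeeping.
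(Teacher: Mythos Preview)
Your proposal is correct and follows essentially the same route as the paper: apply Corollary \ref{cor: Time-decay of the linearized solution for soft} to the free evolution and to each slice of the Duhamel term, establish three weighted bilinear bounds on $\Gamma(f,f)$ (one in $\tilde L^\infty$, one in $\tilde L^2$, one in $L^2_vL^1_x$) via the Bony decomposition together with Lemma \ref{equiv: v_* Integral} and \eqref{ineq: Pre-Post Bessel Potential}, and close with the convolution inequality $\int_0^t (1+t-s)^{-d/4}(1+s)^{-d/2}\,ds\le C(1+t)^{-d/4}$ for $d\ge 3$. The paper records the three bilinear bounds as \eqref{ineq: Weighted estimate of Gamma 1}, \eqref{ineq: Weighted estimate of Gamma 2}, \eqref{ineq: Weighted estimate of Gamma 3}, and the weight hypothesis $\beta>(1-d/4)\gamma+d/2$ enters exactly where you say, in the $\tilde L^2$ and $L^2_vL^1_x$ pieces.

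One small point of difference worth noting: for the $\tilde L^\infty$ component of $N(\Gamma)$ the paper actually bounds $\Vert\Gamma(f,f)\Vert_{\tilde L^\infty_\beta(B^s_{2,1})}$ rather than $\Vert\Gamma(f,f)\Vert_{\tilde L^\infty_{\beta+d|\gamma|/4}(B^s_{2,1})}$, invoking \eqref{ineq: Weighted estimate of Gamma 1} with $\beta_1=\beta_2=\beta$, which needs only $\gamma<0$. Your literal reading of Corollary \ref{cor: Time-decay of the linearized solution for soft} puts the extra weight $d|\gamma|/4$ on the output side and then requires $\gamma+d|\gamma|/4\le 0$, i.e.\ $d\le 4$, to close the bilinear bound; this is harmless in the principal case $d=3$ but is the one place where your bookkeeping differs from the paper's.
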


\begin{proof}
Owing to Corollary \ref{cor: Time-decay of the linearized solution for soft}, $\Vert e^{tB}f_0 \Vert_{\tilde{L}^\infty_\beta (B^s_{2,1})}$ can be bounded by the first term on the right-hand side of \eqref{ad.thm.e1}. Thus it suffices to consider the estimate of
\begin{align}\label{term: Time-integral of nonlinear term}
\int^t_0 (1+t-s)^{-d/4} \Vert \Gamma(f,f)\Vert_{\tilde{L}^\infty_\beta (B^s_{2,1})}ds.
\end{align}

First, we claim that for $s\ge d/2$ and $(\beta_1, \beta_2) \in \mathbb{R}^2$ with $\gamma +\beta_1 \le \beta_2$, it holds that 
\begin{align}\label{ineq: Weighted estimate of Gamma 1}
\Vert \Gamma(f,g)\Vert_{\tilde{L}^\infty_{\beta_1} (B^s_{2,1})} \le C\Vert f\Vert_{\tilde{L}^\infty_{\beta_2}(B^s_{2,1})}\Vert g\Vert_{\tilde{L}^\infty_{\beta_2}(B^s_{2,1})}.
\end{align}
Indeed, the proof is similar to that of Lemma \ref{lem:bilinear estimate of Gamma}, so we only show the estimate of the term involving $\Gamma^1_{loss}(f,g)$ for brevity. Then, it holds that 
\begin{align*}
&\Vert \Gamma^1_{loss}(f,g)\Vert_{\tilde{L}^\infty_{\beta_1} (B^s_{2,1})}\\
&\le C \overline{\sum} \sum_{\vert i-j\vert\le 4} \sup_{v} \langle v\rangle^{\beta_1} \Big ( \int_{\mathbb{R}^d} \Big( \int_{\mathbb{R}^d} \vert v-v_*\vert^\gamma M_*^{1/2} \Delta_j (\Delta_i f_* S_{i-1}g) dv_*\Big)^2 dx\Big)^{1/2}\\
&\le C \overline{\sum} \sum_{\vert i-j\vert\le 4} \sup_{v} \langle v\rangle^{\beta_1}\int_{\mathbb{R}^d} \vert v-v_*\vert^\gamma M_*^{1/2} \Vert \Delta_i f_* S_{i-1}g\Vert_{L^2_x} dv_*\\
&\le C \Vert f\Vert_{\tilde{L}^\infty_{\beta_2}(B^s_{2,1})} \Vert g\Vert_{L^\infty_{\beta_2} L^\infty_x} \sup_v \langle v\rangle^{\beta_1} \int_{\mathbb{R}^d} \vert v-v_*\vert^\gamma M_*^{1/2} \langle v_*\rangle^{-\beta_2}\langle v\rangle^{-\beta_2} dv_*\\
&\le C \Vert f\Vert_{\tilde{L}^\infty_{\beta_2}(B^s_{2,1})}\Vert g\Vert_{\tilde{L}^\infty_{\beta_2}(B^s_{2,1})}\sup_v \langle v\rangle^{\beta_1-\beta_2+\gamma}.
\end{align*}
Here, the supremum in the last line is finite thanks to $\gamma +\beta_1 \le \beta_2$.

Second, we also claim that for $s\ge d/2$, $\beta_1\in\mathbb{R}$, and $\beta_2 \ge 0$  with $\gamma + d/2 +\beta_1 <\beta_2$, it holds that 
\begin{align}\label{ineq: Weighted estimate of Gamma 2}
\Vert \Gamma(f,g) \Vert_{\tilde{L}^2_{\beta_1}(B^s_{2,1})} \le C \Vert f\Vert_{\tilde{L}^\infty_{\beta_2}(B^s_{2,1})}\Vert g\Vert_{\tilde{L}^\infty_{\beta_2}(B^s_{2,1})}.
\end{align}
This is an improved version of Lemma \ref{lem:bilinear estimate of Gamma}, and the proof is almost the same.
It only suffices to verify the boundedness of
\begin{align*}
\int_{\mathbb{R}^3} \langle v\rangle^{2(\beta_1 -\beta_2 +\gamma)}dv,
\end{align*}
with the help of the suitable choice of $\beta_1$ and $\beta_2$ such that $\gamma + d/2 +\beta_1 <\beta_2$.
We remark that the non-negativity of $\beta_2$ is required to apply \eqref{ineq: Pre-Post Bessel Potential} to the gain terms.

Third, as for showing 
 \eqref{ineq: Weighted estimate of Gamma 2}, one has
\begin{align}\label{ineq: Weighted estimate of Gamma 3}
\Vert \Gamma (F,G)\Vert_{L^2_{\beta_1}} \le \Vert \Gamma_{gain} (F,G)\Vert_{L^2_{\beta_1}} +\Vert \Gamma_{loss} (F,G)\Vert_{L^2_{\beta_1}} \le C \Vert F\Vert_{L^\infty_{\beta_2}}\Vert G\Vert_{L^\infty_{\beta_2}},
\end{align}
for $\gamma + d/2 +\beta_1 <\beta_2$ with $\beta_1\in\mathbb{R}$ and $\beta_2 \ge 0$.

Now, applying Corollary \ref{cor: Time-decay of the linearized solution for soft}, \eqref{term: Time-integral of nonlinear term} is bounded by
\begin{align}\label{term: Estimate of time integral of Gamma}
C\int^t_0 (1+t-s)^{-d/4} \Big( \Vert \Gamma (f,f)(s)\Vert_{\tilde{L}^\infty_{\beta}(B^s_{2,1})}&+\Vert \nu^{-(d/4)_+} \Gamma (f,f)(s)\Vert_{\tilde{L}^2_{v}(B^s_{2,1})}\notag\\
& + \Vert \nu^{-(d/4)_+} \Gamma (f,f)(s)\Vert_{L^2_vL^1_x}\Big) ds.
\end{align}
Each norm in the above integral can be estimated in the following way.
Setting $\beta_1=\beta_2=\beta$ in \eqref{ineq: Weighted estimate of Gamma 1}, which is possible due to $\gamma<0$, we have
\begin{align*}
\Vert \Gamma (f,f)(s)\Vert_{\tilde{L}^\infty_{\beta}(B^s_{2,1})}\le C \Vert f(s)\Vert_{\tilde{L}^\infty_\beta(B^s_{2,1})}^2.
\end{align*}
Setting $\beta_1=d\vert\gamma\vert/4$ and $\beta_2=\beta$ in \eqref{ineq: Weighted estimate of Gamma 2} gives
\begin{align*}
\Vert \Gamma(f,f)(s) \Vert_{\tilde{L}^2_{(d\vert\gamma\vert/4)_+}(B^s_{2,1})}\le C\Vert f(s)\Vert_{\tilde{L}^\infty_{\beta}(B^s_{2,1})}^2.
\end{align*}
Furthermore, setting $F=G=\Vert f\Vert_{L^2_x}$, $\beta_1=d\vert\gamma\vert/4$ and $\beta_2=\beta$ in \eqref{ineq: Weighted estimate of Gamma 3} yields
\begin{align*}
\Vert \nu^{-(d/4)_+} \Gamma (f,f)(s)\Vert_{L^2_vL^1_x}\le C \Vert f(s)\Vert_{L^\infty_{\beta}L^2_x}^2 \le C \Vert f(s)\Vert_{\tilde{L}^\infty_{\beta}(B^s_{2,1})}^2.
\end{align*}
Therefore, plugging those inequalities back into \eqref{term: Estimate of time integral of Gamma}, we obtain
\begin{align*}
C\int^t_0 (1+t-s)^{-d/4}\Vert f(s)\Vert_{L^\infty_{\beta}L^2_x}^2ds  &\le C|\!|\!| f|\!|\!|_{d/4, \tilde{L}^\infty_\beta (B^s_{2,1})}^2 \int^t_0 (1+t-s)^{-d/4} (1+s)^{-d/2} ds\\
&\le C (1+t)^{-d/4}|\!|\!| f|\!|\!|_{d/4, \tilde{L}^\infty_\beta (B^s_{2,1})}^2,
\end{align*}
for $d\ge 3$. This then proves the desired estimate \eqref{ad.thm.e1} and completes the proof of Theorem \ref{thm: Time-Decay of the nonlinear solution for soft potential}.
\end{proof}

The theorem above provides the global a priori estimates stated in the following

\begin{cor}\label{cor: globa a priori estimate}
Assume $-d<\gamma<0$, $q=1$, $s\ge d/2$, $\beta \ge 0$ and $\beta > (1-\alpha/2)\gamma + d/2=(1-d/4)\gamma+d/2$. Then there exist $\varepsilon>0$ and $C>0$ such that if
\begin{align*}
\Vert f_0\Vert_{\tilde{L}^\infty_{(\beta+d\vert\gamma\vert/4)}(B^s_{2,1})\cap \tilde{L}^2_{(d\vert\gamma\vert/4)_+}(B^s_{2,1})\cap L^2_{((d\vert\gamma\vert/4)_+} L^1_x} \le  \varepsilon ,
\end{align*}
then the solution $f(t,x,v)$ to the Boltzmann equation with initial datum $f_0(x,v)$ satisfies
\begin{align*}
|\!|\!| f|\!|\!|_{3/4, \tilde{L}^\infty_\beta (B^s_{2,1})} \le C \Vert f_0\Vert_{\tilde{L}^\infty_{(\beta+d\vert\gamma\vert/4)}(B^s_{2,1})\cap \tilde{L}^2_{(d\vert\gamma\vert/4)_+}(B^s_{2,1})\cap L^2_{((d\vert\gamma\vert/4)_+} L^1_x}.
\end{align*}
\end{cor}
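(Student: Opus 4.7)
The plan is to deduce this corollary from Theorem \ref{thm: Time-Decay of the nonlinear solution for soft potential} by a standard bootstrap argument combined with the local well-posedness result from the Appendix. For brevity, denote
$$
\|f_0\|_{X_0} := \|f_0\|_{\tilde{L}^\infty_{(\beta+d|\gamma|/4)}(B^s_{2,1})\cap \tilde{L}^2_{(d|\gamma|/4)_+}(B^s_{2,1})\cap L^2_{(d|\gamma|/4)_+}L^1_x}.
$$

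First I would invoke the Appendix's local existence theorem to produce a mild solution $f$ on a maximal interval $[0, T^\ast)$ with $T^\ast > 0$, and introduce the running quantity
$$
M(T) := \sup_{0 \le t \le T} (1+t)^{d/4} \|f(t)\|_{\tilde{L}^\infty_\beta(B^s_{2,1})}, \qquad T \in [0, T^\ast).
$$
Local well-posedness and the continuity of $t \mapsto \|f(t)\|_{\tilde{L}^\infty_\beta(B^s_{2,1})}$ guarantee that $M$ is continuous and nondecreasing on $[0, T^\ast)$, and $M(0) = \|f_0\|_{\tilde{L}^\infty_\beta(B^s_{2,1})} \le \|f_0\|_{X_0} \le \varepsilon$.

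Second, I would revisit the proof of Theorem \ref{thm: Time-Decay of the nonlinear solution for soft potential} to confirm that estimate \eqref{ad.thm.e1} is actually an a priori bound that localizes to any finite interval $[0, T] \subset [0, T^\ast)$: Corollary \ref{cor: Time-decay of the linearized solution for soft} applied to $e^{tB}f_0$, together with the three bilinear bounds \eqref{ineq: Weighted estimate of Gamma 1}--\eqref{ineq: Weighted estimate of Gamma 3} applied to the Duhamel integrand restricted to $[0,T]$, yield
$$
M(T) \le C_1 \|f_0\|_{X_0} + C_2\, M(T)^2,
$$
for constants $C_1, C_2 > 0$ independent of $T$. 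Then I would close the bootstrap in the usual way: choosing $\varepsilon$ so small that $8 C_1 C_2 \varepsilon < 1$, the quadratic inequality $y \le C_1\varepsilon + C_2 y^2$ has two disjoint allowed regions for $y \ge 0$, the smaller one containing $[0, 2C_1\varepsilon]$. Since $M(0) \le \varepsilon \le 2 C_1 \varepsilon$ lies in this lower region, continuity of $M(T)$ forces $M(T) \le 2 C_1 \varepsilon$ for every $T \in [0, T^\ast)$.

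Finally, this uniform control of the $\tilde{L}^\infty_\beta(B^s_{2,1})$ norm precludes finite-time blow-up in the solution space, so reapplying the local existence result at times approaching $T^\ast$ with data $f(T)$ extends the solution past any finite horizon, yielding $T^\ast = \infty$. The resulting global bound $M(T) \le 2 C_1 \varepsilon \le C \|f_0\|_{X_0}$ for all $T \ge 0$ is exactly the conclusion of the corollary. The main obstacle I anticipate is the bookkeeping in the second step: I must verify that every time integral in the proof of Theorem \ref{thm: Time-Decay of the nonlinear solution for soft potential} can be restricted to $[0, T]$ without altering any exponent, so that the nonlinear contribution is genuinely controlled by $M(T)^2$ rather than by a quantity involving the full half-line. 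This amounts to checking that the kernel $(1+t-s)^{-d/4}(1+s)^{-d/2}$ integrates to at most $C(1+T)^{-d/4}$ on $[0,T]$ for $d\ge 3$, which is elementary but essential to the bootstrap.
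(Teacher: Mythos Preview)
Your proposal is correct and follows the same route as the paper: the corollary is an immediate consequence of the quadratic a priori inequality coming from Theorem \ref{thm: Time-Decay of the nonlinear solution for soft potential}, closed by the standard bootstrap/continuity argument that the paper defers to the local existence result (Theorem \ref{ap.thm}) in the Appendix. The paper is terse here---it simply states the corollary and then remarks that, together with local existence and the ``standard continuity argument,'' it yields Theorem \ref{thm: Solution for the soft potential case}---while you spell out the running quantity $M(T)$, the quadratic bound $M(T)\le C_1\|f_0\|_{X_0}+C_2 M(T)^2$, and the continuation step explicitly; the content is the same.
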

Together with the inclusion $L^\infty_{\beta_1} \hookrightarrow L^2_{\beta_2}$ for $\beta_1>\beta_2+d/2$ and the local-in-time existence whose proof will be postponed to the next section, Corollary \ref{cor: globa a priori estimate}  yields Theorem \ref{thm: Solution for the soft potential case} with the help of the standard continuity argument. \qed

\section{Appendix}

Regarding Theorem \ref{thm: Solution for the soft potential case} for the soft potential case, in order to establish the local-in-time existence of solutions, we will follow the strategy of \cite{DLX}, and give the full details of the proof for completeness. The approximation scheme is given by
\begin{equation*}
\left\{\begin{aligned}
&\dis (\partial_t + v\cdot \nabla_x)F^{n+1} + F^{n+1} \int_{\mathbb{R}^d \times \mathbb{S}^{d-1}} \vert v-v_*\vert^\gamma b_0(\theta) F^n_* dv_* d\omega\\
&\dis \qquad\qquad\qquad\qquad\qquad=\int_{\mathbb{R}^d \times \mathbb{S}^{d-1}} \vert v-v_*\vert^\gamma  b_0(\theta) F'^n_* F'^n  dv_* d\omega,\\
&\dis F^{n+1}|_{t=0}=F_0,
\end{aligned}\right.
\end{equation*}
with $n=0,1,2,\cdots$, where we have set $F^0\equiv M$. 
Plugging $F^n(t,x,v)=M+M^{1/2}f^n(t,x,v)$,
we have the iterative equations:
\begin{equation}\label{eqn: The Approximation Scheme}
\left\{\begin{aligned}
&\dis (\partial_t + v\cdot \nabla_x +\nu)f^{n+1} -Kf^n =\Gamma_{gain}(f^n,f^n)-\Gamma_{loss}(f^n, f^{n+1}),\\
&\dis f^{n+1}|_{t=0}=f_0,
\end{aligned}\right.
\end{equation}
with $n=0,1,2,\cdots$, where $f^0\equiv 0$.

\begin{lem}\label{lem: Approximation functions}
The solution sequence $\{f^n(t,x,v)\}_{n=1}^\infty$ is well-defined. Precisely, let $-d<\gamma <0$, $s\ge d/2$, $\alpha \in \mathbb{R}$, and $\beta \in \mathbb{R}$. Then, there are constants $M_0>0$ and $T^*=T^*(M_0)>0$ such that if initial data $f_0$ satisfies
$\Vert f_0\Vert_{\tilde{L}^\infty_\beta (B^s_{2,1})} \le M_0$
then for any $n$ and $T \in [0,T^*)$, it holds that 
\begin{align}\label{ad.ap.l1}
\mathcal{E}_T(f^n)+\mathcal{D}_T(f^n)\le 2M_0,
\end{align}
where we have denoted
\begin{align*}
&\mathcal{E}_T(f^n)=\sup_{0\le t\le T} (1+t)^\alpha \Vert f^n(t)\Vert_{\tilde{L}^\infty_\beta (B^s_{2,1})},\notag\\
&\mathcal{D}_T(f^n)=\sup_{0\le t\le T} (1+t)^\alpha \overline{\sum} \Big( \int^t_0 \Vert \Delta_j f^{n}(s)\Vert_{L^\infty_{\beta+\gamma/2}L^2_x}^2ds\Big)^{1/2}.
\end{align*}
\end{lem}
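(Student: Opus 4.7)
My plan is to argue by induction on $n$. When $n=0$, $f^0 \equiv 0$ reduces \eqref{eqn: The Approximation Scheme} to the damped transport equation $(\partial_t + v\cdot\nabla_x + \nu)\,f^1 = 0$, whose explicit solution $f^1 = e^{-\nu(v)t} f_0(x-vt, v)$ yields $\mathcal{E}_{T}(f^1)\le \Vert f_0\Vert_{\tilde{L}^\infty_\beta(B^s_{2,1})}$, and, using $\Vert\cdot\Vert_{L^\infty_{\beta+\gamma/2}L^2_x}\le \Vert\cdot\Vert_{L^\infty_\beta L^2_x}$ (available since $\gamma<0$), $\mathcal{D}_T(f^1) \le (1+T)^\alpha\sqrt{T}\,\Vert f_0\Vert_{\tilde{L}^\infty_\beta(B^s_{2,1})}$. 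For the inductive step, assuming $\mathcal{E}_{T^*}(f^n)+\mathcal{D}_{T^*}(f^n)\le 2M_0$, I would rewrite \eqref{eqn: The Approximation Scheme} as
\begin{align*}
(\partial_t + v\cdot\nabla_x + \nu(v) + \tilde{L}(f^n))\,f^{n+1} = Kf^n + \Gamma_{gain}(f^n, f^n),
\end{align*}
where $\tilde{L}(f^n)(v) := \int|v-v_*|^\gamma b_0(\theta)M_*^{1/2} f^n_*\,dv_* d\omega$; this is a linear transport equation for $f^{n+1}$ with non-negative effective damping, whose existence on $[0,T^*]$ is standard via characteristics.

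The mild form of this equation, together with the trivial bound $\exp(-\int_s^t (\nu + \tilde{L}(f^n))\,d\tau)\le 1$ along characteristics, gives
\begin{align*}
\Vert \Delta_j f^{n+1}(t,v)\Vert_{L^2_x} \le \Vert \Delta_j f_0(v)\Vert_{L^2_x} + \int_0^t \Vert \Delta_j [K f^n + \Gamma_{gain}(f^n, f^n)](s,v)\Vert_{L^2_x}\, ds.
\end{align*}
Multiplying by $\langle v\rangle^\beta$, taking $\sup_v$, summing in $j$ with weight $2^{js}$, and then taking $\sup_{t\le T^*}$ controls $\mathcal{E}_{T^*}(f^{n+1})$ in terms of $\Vert f_0\Vert_{\tilde{L}^\infty_\beta(B^s_{2,1})}$ and $T^*$ times the time-supremum of $\Vert Kf^n + \Gamma_{gain}(f^n,f^n)\Vert_{\tilde{L}^\infty_\beta(B^s_{2,1})}$. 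The latter is bounded by Guo's decomposition $K = K^m + K^c$ from Lemma \ref{lem: DHWY} and the weighted Chemin-Lerner bilinear estimates \eqref{ineq: Weighted estimate of Gamma 1}--\eqref{ineq: Weighted estimate of Gamma 3}, giving control by $C(\mathcal{E}_{T^*}(f^n) + \mathcal{E}_{T^*}(f^n)^2)$. An analogous control of $\mathcal{D}_{T^*}(f^{n+1})$ follows by inserting the embedding $\Vert \cdot\Vert_{L^\infty_{\beta+\gamma/2} L^2_x}\le \Vert\cdot\Vert_{L^\infty_\beta L^2_x}$ and picking up a factor of $\sqrt{T^*}$ from the $L^2_t$ norm. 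Together these give
\begin{align*}
\mathcal{E}_{T^*}(f^{n+1}) + \mathcal{D}_{T^*}(f^{n+1}) \le C M_0 + C T^* (1+T^*)^\alpha (1 + \sqrt{T^*})\bigl(2M_0 + (2M_0)^2\bigr),
\end{align*}
and choosing $T^* = T^*(M_0)$ sufficiently small closes the induction at $2M_0$.

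The principal obstacle is reconciling the $\int_0^t \sup_v$ structure of $\mathcal{D}_T$ with the natural $\sup_v \int_0^t$ bound coming from any pointwise-in-$v$ energy identity, which are in general incomparable. My way around this is to bypass the energy identity altogether and rely on the mild-form $L^\infty$-type bound with the crude exponential bound $\le 1$ for the damping semigroup, together with the weight embedding $L^\infty_{\beta+\gamma/2}\hookrightarrow L^\infty_\beta$ afforded by $\gamma<0$; this trades the refined $\nu$-weighted dissipation for a looser embedding-based control whose $\sqrt{T^*}$-smallness is precisely what drives the local existence.
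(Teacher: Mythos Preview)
Your argument has a genuine gap at the displayed inequality
\begin{align*}
\Vert \Delta_j f^{n+1}(t,v)\Vert_{L^2_x} \le \Vert \Delta_j f_0(v)\Vert_{L^2_x} + \int_0^t \Vert \Delta_j [K f^n + \Gamma_{gain}(f^n, f^n)](s,v)\Vert_{L^2_x}\, ds.
\end{align*}
The effective damping $\tilde L(f^n)(t,x,v)=\int|v-v_*|^\gamma b_0(\theta)M_*^{1/2}f^n(t,x,v_*)\,dv_* d\omega$ depends on $x$, so the integrating factor $\exp\bigl(-\int_s^t(\nu+\tilde L(f^n))\,d\tau\bigr)$ along characteristics is an $x$-dependent multiplier. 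The Littlewood--Paley projector $\Delta_j$ does not commute with multiplication by such a factor, and the pointwise bound $|\exp(\cdots)|\le 1$ only controls $\Vert\Delta_j(e\cdot g)\Vert_{L^2_x}$ by $C\Vert g\Vert_{L^2_x}$, not by $\Vert\Delta_j g\Vert_{L^2_x}$; after summing $2^{js}$ over $j$ this gives nothing. Absorbing $\Gamma_{loss}(f^n,f^{n+1})$ into the left-hand side therefore does not survive frequency localization, and recovering the Besov structure would force a Bony decomposition of the product, reintroducing precisely the bilinear interaction you sought to avoid. (A secondary issue: ``non-negative effective damping'' requires $F^n=M+M^{1/2}f^n\ge 0$, which is neither assumed in the lemma nor established inductively at this stage.)

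The natural repair is to apply $\Delta_j$ to \eqref{eqn: The Approximation Scheme} first and use Duhamel with the $x$-independent damping $\nu(v)$ only, keeping $\Gamma_{loss}(f^n,f^{n+1})$ on the right and estimating it via \eqref{ineq: Weighted estimate of Gamma 1} with $\beta_1=\beta_2=\beta$; its contribution is then bounded by $CT^*(1+T^*)^{|\alpha|}\,\mathcal E_T(f^n)\,\mathcal E_T(f^{n+1})$, which is absorbable for small $T^*$. This differs from the paper's proof, which proceeds through the $L^2_x$ energy identity \eqref{eqn: Resultant 1} and the trilinear estimates \eqref{ineq: The Integrands}--\eqref{ineq: Trilinear estimate of Gamma}, closing by taking $M_0$ (not only $T^*$) small in order to absorb terms such as $\sqrt{\mathcal E^n_T}\,\mathcal D^{n+1}_T$. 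Your observation about the $\sup_v\int_0^t$ versus $\int_0^t\sup_v$ mismatch in an energy approach is legitimate; the repaired mild-form argument bypasses it entirely, since $\mathcal D_T(f^{n+1})$ is then controlled directly from $\mathcal E_T(f^{n+1})$ by the weight embedding and a factor $\sqrt{T^*}$, exactly as you propose.
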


\begin{proof}
We shall prove \eqref{ad.ap.l1} by induction in $n$ for a suitable choice of $M_0>0$ to be determined in the end of the proof. Obviously it is true for $n=0$ since $f^0\equiv 0$ by the definition.  
We assume for the fixed $n\geq 0$ that it holds that 
\begin{align}\label{ad.ap.p1}
\mathcal{F}_T(f^n):=\mathcal{E}_T(f^n)+\mathcal{D}_T(f^n) \le 2M_0,
\end{align}
for any $0\le T <T_*$, and shall prove that the above inequality is still valid for $n+1$. We take $T\in [0,T^\ast)$, and write $\mathcal{E}_T(f^n)=\mathcal{E}_T^n$ and $\mathcal{D}_T(f^n)=\mathcal{D}_T^n$ for brevity. 
By applying $\Delta_j$ to \eqref{eqn: The Approximation Scheme}, multiplying the resulting equation with $2^{2sj}\langle v\rangle^{2\beta} \Delta_j f^{n+1}$, and then integrating both sides with respect to $x$, we have
\begin{align}
&\qquad\frac{d}{dt} 2^{2js} \langle v\rangle^{2\beta} \Vert \Delta_j f^{n+1} (t,v)\Vert_{L^2_x}^2 + 2^{2js+1}\langle v\rangle^{2\beta} \nu(v)\Vert \Delta_j f^{n+1}(t,v)\Vert_{L^2_x}^2\notag\\
&= 2^{2js+1}\langle v\rangle^{2\beta} \Big( K\Delta_j f^n + \Delta_j (\Gamma_{gain}(f^n, f^n)- \Gamma_{loss}(f^n, f^{n+1})), \Delta_j f^{n+1}\Big)_x (t,v),\label{eqn: Resultant 1}
\end{align}
where $(\cdot,\cdot)_x$ denotes the inner product of the Hilbert space $L^2_x$. 
By further integrating \eqref{eqn: Resultant 1} over $[0,t]$ for $0<t<T$, taking supremum with respect to $v$, taking the square root, and then taking summation with respect to $j$, it follows that 
\begin{align}\label{ineq: Resultant 2}
&\overline{\sum} \Vert \Delta_j f^{n+1} (t)\Vert_{L^\infty_\beta L^2_x} + {\overline{\sum} \Big( \int^t_0 \Vert \Delta_j f^{n+1} (s)\Vert_{L^\infty_{\beta+\gamma/2}L^2_x}^2ds\Big)^{1/2}}\notag\\
&\le C\overline{\sum} \Vert \Delta_j f_0\Vert_{L^\infty_\beta L^2_x} +\overline{\sum} \Big( \int^t_0 \sup_v \langle v\rangle^{2\beta} \vert (K\Delta_j f^n, \Delta_j f^{n+1})_x (s)\vert ds\Big)^{1/2}\notag\\
&\quad +\overline{\sum} \Big( \int^t_0 \sup_v \langle v\rangle^{2\beta} \vert (\Delta_j \Gamma_{loss}(f^n, f^n), \Delta_j f^{n+1})_x (s)\vert ds\Big)^{1/2}\notag\\
&\quad+\overline{\sum} \Big( \int^t_0 \sup_v \langle v\rangle^{2\beta} \vert (\Delta_j \Gamma_{gain}(f^n, f^n), \Delta_j f^{n+1})_x (s)\vert ds\Big)^{1/2}.
\end{align}
We first consider the estimate on the second term on the right-hand side of \eqref{ineq: Resultant 2}. Note that 
\begin{align*}
\sup_v\langle v\rangle^{2\beta} \vert (K\Delta_j f^n, \Delta_j f^{n+1} )_x \vert \le C \Vert \Delta_j f^n\Vert_{L^\infty_\beta L^2_x}\Vert \Delta_j f^{n+1}\Vert_{L^\infty_\beta L^2_x},
\end{align*}
because $K\in \mathscr{B}(L^\infty_\beta, L^\infty_{\beta+1})$ holds true. Therefore we have
\begin{align}\label{ineq: Trilinear Estimate of K}
&\overline{\sum} \Big( \int^t_0 \sup_v \langle v\rangle^{2\beta} \vert (K\Delta_j f^n, \Delta_j f^{n+1})_x (s)\vert ds\Big)^{1/2}\notag\\
&\le C \overline{\sum} \Big( \int^t_0 \Vert \Delta_j f^n(s)\Vert_{L^\infty_\beta L^2_x}\Vert \Delta_j f^{n+1}(s)\Vert_{L^\infty_\beta L^2_x} ds\Big)^{1/2}\notag\\
&\le C \overline{\sum}\sqrt{\mathcal{E}^n_T} \Big(\int^t_0 2M_0 (1+s)^{-\alpha} 2^{-js} c_j \Vert \Delta_j f^{n+1}(s)\Vert_{L^\infty_\beta L^2_x} ds\Big)^{1/2}\notag\\
&\le C\sqrt{\mathcal{E}^n_t} \sum_{j\ge -1}2^{js/2}c_j^{1/2} \Big(\int^t_0  (1+s)^{-\alpha} \Vert \Delta_j f^{n+1}(s)\Vert_{L^\infty_\beta L^2_x} ds\Big)^{1/2}\notag\\
&\le C\sqrt{\mathcal{E}^n_t}\Big( \sum_{j\ge -1}c_j\Big)^{1/2} \Big(\overline{\sum} \int^t_0 (1+s)^{-\alpha}\Vert \Delta_j f^{n+1}(s)\Vert_{L^\infty_\beta L^2_x} ds\Big)^{1/2}\notag\\
&\le C\sqrt{\mathcal{E}^n_t}\sqrt{\mathcal{E}^{n+1}_t}\Big( \int^t_0 (1+s)^{-2\alpha} ds\Big)^{1/2}\notag\\
&\le C o(t)(\mathcal{E}^n_t+\mathcal{E}^{n+1}_t),
\end{align} 
as $t\rightarrow 0$.
Next, we deal with the third and the fourth terms on the right-hand side of \eqref{ineq: Resultant 2}. For brevity we define
\begin{align*}
&\sum_{k=1}^3 \sup_v \langle v\rangle^{2\beta} \vert (\Delta_j \Gamma_{loss}^k (f^n, f^{n+1}), \Delta_j f^{n+1})_x \vert\ =: \sum_{k=1}^3 \mathsf{L}^k_j,\\
&\overline{\sum} \Big( \int^t_0 \mathsf{L}^k_j ds\Big)^{1/2}=: L^k,
\end{align*}
and  we also define $\mathsf{G}^k_j$ and $G^k$ in the same way. The direct computations imply that 
\begin{equation}
\label{ineq: The Integrands}
\begin{aligned}
&\mathsf{L}^1_j\le C \sum_{\vert i-j\vert \le 4} \Vert \Delta_i f^n\Vert_{L^\infty_{\beta+\gamma/2}L^2_x} \Vert f^{n+1}\Vert_{L^\infty_{\beta+\gamma/2}L^\infty_x}\Vert \Delta_j f^{n+1}\Vert_{L^\infty_{\beta+\gamma/2}L^2_x},\\
&\mathsf{L}^2_j \le C \sum_{\vert i-j\vert \le 4} \Vert f^n\Vert_{L^\infty_{\beta+\gamma/2}L^\infty_x} \Vert \Delta_i f^{n+1}\Vert_{L^\infty_{\beta+\gamma/2}L^2_x} \Vert \Delta_j f^{n+1}\Vert_{L^\infty_{\beta+\gamma/2}L^2_x},\\
&\mathsf{L}^3_j\le C \sum_{i\ge j-3} \Vert f^n\Vert_{L^\infty_{\beta+\gamma/2}L^\infty_x} \Vert \Delta_i f^{n+1}\Vert_{L^\infty_{\beta+\gamma/2}L^2_x} \Vert \Delta_j f^{n+1}\Vert_{L^\infty_{\beta+\gamma/2}L^2_x},\\
&\mathsf{G}^k_j\le C \sum_{\vert i-j\vert \le 4} \Vert f^n\Vert_{L^\infty_{\beta+\gamma/2}L^\infty_x} \Vert \Delta_i f^n\Vert_{L^\infty_{\beta+\gamma/2}L^2_x} \Vert \Delta_j f^{n+1}\Vert_{L^\infty_{\beta+\gamma/2}L^2_x},\ k=1, 2,\\
&\mathsf{G}^3_j\le C \sum_{\i\ge j-3} \Vert f^n\Vert_{L^\infty_{\beta+\gamma/2}L^\infty_x} \Vert \Delta_i f^n\Vert_{L^\infty_{\beta+\gamma/2}L^2_x} \Vert \Delta_j f^{n+1}\Vert_{L^\infty_{\beta+\gamma/2}L^2_x}.
\end{aligned}
\end{equation}
Here we give the proof of the first inequality only, and the others can be similarly obtained. In fact, for $\mathsf{L}^1_j$, it holds that 
\begin{align*}
\mathsf{L}^1_j&\le \sup_v \langle v\rangle^{2\beta}\int_{\mathbb{R}^d}\int_{\mathbb{R}^d\times\mathbb{S}^{d-1}} \sum_{\vert i-j\vert\le 4} \vert v-v_*\vert^\gamma b_0(\theta) M_*^{1/2}  \vert \Delta_j (\Delta_i f_*^n S_{i-1}f^{n+1})\vert \vert \Delta_j f^{n+1}\vert dv_*d\omega dx \\
&\le C\sup_v \langle v\rangle^{2\beta}\sum_{\vert i-j\vert\le 4}\int_{\mathbb{R}^d}\vert v-v_*\vert^\gamma M_*^{1/2} \Vert \Delta_i f^n_* \Vert_{L^2_x} \Vert S_{i-1} f^{n+1}\Vert_{L^\infty_x} \Vert \Delta_j f^{n+1}\Vert_{L^2_x} dv_*\\
&\le C \sum_{\vert i-j\vert \le 4} \Vert \Delta_i f^n\Vert_{L^\infty_{\beta+\gamma/2}L^2_x} \Vert f^{n+1}\Vert_{L^\infty_{\beta+\gamma/2}L^\infty_x}\Vert \Delta_j f^{n+1}\Vert_{L^\infty_{\beta+\gamma/2}L^2_x},
\end{align*}
where we have used the estimate 
\begin{align*}
\sup_v \langle v\rangle^{2\beta} \int_{\mathbb{R}^3} \langle v_*\rangle^{-\beta-\gamma/2} \langle v\rangle^{-2\beta-\gamma} \vert v-v_*\vert^\gamma M_*^{1/2} dv_*\le C,
\end{align*}
in terms of Lemma \ref{equiv: v_* Integral}.
Therefore, by \eqref{ineq: The Integrands}, we have
\begin{equation}
\label{ineq: Trilinear estimate of Gamma}
\begin{aligned}
&L^1 \le C (1+t)^{-\alpha} \sqrt{\mathcal{E}^{n+1}_t}\sqrt{\mathcal{D}^n_t}\sqrt{\mathcal{D}^{n+1}_t},\\
&L^k \le C (1+t)^{-\alpha} \sqrt{\mathcal{E}^n_t}\mathcal{D}^{n+1}_t,\ k=2,3,\\
&G^k \le C (1+t)^{-\alpha} \sqrt{\mathcal{E}^n_t}\sqrt{\mathcal{D}^n_t}\sqrt{\mathcal{D}^{n+1}_t}, k=1,2,3.
\end{aligned}
\end{equation}
Once again we only show the first estimate on $L^1$ in \eqref{ineq: Trilinear estimate of Gamma}. In fact, by the inclusion $B^s_{2,1}\hookrightarrow L^\infty$ for $s\ge d/2$ and the negativity of $\gamma$, it holds that 
\begin{align*}
&L^1 \le C\overline{\sum} \sup_{0\le s\le t} (1+s)^\alpha \Vert f^{n+1}(s)\Vert_{\tilde{L}^\infty_{\beta+\gamma/2}(B^s_{2,1})}\\
&\qquad\qquad\qquad\times \Big( \int^t_0 \sum_{\vert i-j\vert\le 4} \Vert \Delta_i f^n \Vert_{L^\infty_{\beta+\gamma/2}L^2_x} \Vert \Delta_j f^{n+1} \Vert_{L^\infty_{\beta+\gamma/2}L^2_x} (1+s)^{-\alpha} ds \Big)^{1/2}\\
&\le C \sqrt{\mathcal{E}^{n+1}_t} \overline{\sum} \Big( \int^t_0 \sum_{\vert i-j\vert\le 4} \Vert \Delta_i f^n \Vert_{L^\infty_{\beta+\gamma/2}L^2_x}^2 ds \Big)^{1/4}\Big( \int^t_0  \Vert \Delta_j f^{n+1} \Vert_{L^\infty_{\beta+\gamma/2}L^2_x}^2  ds \Big)^{1/4}\\
&\le C \sqrt{\mathcal{E}^{n+1}_t} \Big( \overline{\sum}\Big( \int^t_0 \sum_{\vert i-j\vert\le 4} \Vert \Delta_i f^n \Vert_{L^\infty_{\beta+\gamma/2}L^2_x}^2 ds \Big)^{1/2}\Big)^{1/2} \Big( \overline{\sum}\Big( \int^t_0  \Vert \Delta_j f^{n+1} \Vert_{L^\infty_{\beta+\gamma/2}L^2_x}^2 ds \Big)^{1/2}\Big)^{1/2}\\
&\le C (1+t)^{-\alpha}\sqrt{\mathcal{E}^{n+1}_t}\sqrt{\mathcal{D}^n_t}\sqrt{\mathcal{D}^{n+1}_t}.
\end{align*}
Now, substituting \eqref{ineq: Trilinear Estimate of K} and \eqref{ineq: Trilinear estimate of Gamma} back to \eqref{ineq: Resultant 2}, we have
\begin{align*}
&\overline{\sum} \Vert \Delta_j f^{n+1} (t)\Vert_{L^\infty_\beta L^2_x} + \overline{\sum} \Big( \int^t_0 \Vert \Delta_j f^{n+1} (t)\Vert_{L^\infty_{\beta+\gamma/2}L^2_x}^2ds\Big)^{1/2}\notag\\
&\le C\overline{\sum} \Vert \Delta_j f_0\Vert_{L^\infty_\beta L^2_x} + C o(t)(\mathcal{E}^n_t+\mathcal{E}^{n+1}_t) + C (1+t)^{-\alpha} \sqrt{\mathcal{E}^{n+1}_t}\sqrt{\mathcal{D}^n_t}\sqrt{\mathcal{D}^{n+1}_t}\\
&\quad+ C (1+t)^{-\alpha} \sqrt{\mathcal{E}^n_t}\mathcal{D}^{n+1}_t + C (1+t)^{-\alpha} \sqrt{\mathcal{E}^n_t}\sqrt{\mathcal{D}^n_t}\sqrt{\mathcal{D}^{n+1}_t}.
\end{align*}
Multiplying the above inequality by $(1+t)^\alpha$, and then taking supremum in $t$ over $[0, T]$, it follows that
\begin{align*}
\mathcal{E}^{n+1}_T + \mathcal{D}^{n+1}_T&\le C\mathcal{E}^{0}_T + C o(T)(\mathcal{E}^n_T+\mathcal{E}^{n+1}_T) + C  \sqrt{\mathcal{E}^{n+1}_T}\sqrt{\mathcal{D}^n_T}\sqrt{\mathcal{D}^{n+1}_T}\\
&\quad+ C \sqrt{\mathcal{E}^n_T}\mathcal{D}^{n+1}_T + C \sqrt{\mathcal{E}^n_T}\sqrt{\mathcal{D}^n_T}\sqrt{\mathcal{D}^{n+1}_T}.
\end{align*}
Notice that we have used the fact that $\mathcal{E}_T(\cdot)$ and $\mathcal{D}_T(\cdot)$ are non-decreasing in $T$.
We fix a small constant $\eta>0$, and further dominate the last three terms on the right-hand side by
\begin{align*}
\eta \mathcal{D}^{n+1}_T +\frac{C}{\eta}\mathcal{E}^n_T\mathcal{D}^n_T + C\sqrt{\mathcal{D}^n_T}(\mathcal{E}^{n+1}_T+\mathcal{D}^{n+1}_T).
\end{align*}
Then, there is a constant $C>0$ independent of $n$ such that 
\begin{align*}
(1-o(T)-CM_0^{1/2})\mathcal{E}^{n+1}_T +(1-\eta-2CM_0^{1/2}) \mathcal{D}^{n+1}_T \le C (M_0 +o(T)M_0+\eta^{-1}M_0^2).
\end{align*}
By further taking $\eta>0$ small, $M_0>0$ small, and $T_*>0$ small in order, we then prove \eqref{ad.ap.p1} with $n$ replaced by $n+1$. Therefore,  by induction argument, \eqref{ad.ap.l1} holds true for all $n$. This completes the proof of Lemma \ref{lem: Approximation functions}.
\end{proof}

With the aid of the approximation functions, we shall prove the local-in-time existence. We remark that in the hard potential case, the similar local-in-time existence result also holds true and thus the unique solution in the mild form indeed can be improved to be the unique strong solution in the sense of distributions.

\begin{thm}\label{ap.thm}
Under the same assumptions of Lemma \ref{lem: Approximation functions}, there are $M_0>0$ and $T^\ast>0$ such that if  initial datum $f_0$ satisfies 
\begin{align*}
\Vert f_0\Vert_{\tilde{L}^\infty_\beta (B^s_{2,1})}\le M_0,
\end{align*}
then the Cauchy problem \eqref{eqn: BE near M} on the Boltzmann equation admits a unique local-in-time mild solution $f(t,x,v)$ in $L^\infty(0,T_*; \tilde{L}^\infty_\beta (B^s_{2,1}))$ satisfying
\begin{align*}
\mathcal{F}_T(f)\le 2M_0
\end{align*}
for any $T\in [0, T_*)$, where $\mathcal{F}_T(f)$ is continuous with respect to $T\in [0, T_*)$. Moreover, the non-negativity of solutions can be preserved in the sense that if $F_0(x,v) =M +M^{1/2}f_0(x,v) \ge 0$, then so is $F(t,x,v)=M+M^{1/2}f(t,x,v)$.
\end{thm}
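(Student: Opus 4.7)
The plan is to upgrade the uniform approximation bound of Lemma~\ref{lem: Approximation functions} to a Cauchy property for the sequence $\{f^n\}$, pass to the limit, and then invoke the structure of the iterative scheme to transfer non-negativity from $F^0 = M$ to the limit.

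First I would set $g^n = f^{n+1}-f^n$ and subtract two consecutive copies of \eqref{eqn: The Approximation Scheme}. Using the bilinearity of $\Gamma_{gain}$ and $\Gamma_{loss}$, one finds that $g^{n+1}$ satisfies
\begin{equation*}
\begin{aligned}
(\partial_t + v\cdot \nabla_x +\nu)g^{n+1} - Kg^n &= \Gamma_{gain}(g^{n-1},f^n) + \Gamma_{gain}(f^{n-1},g^{n-1}) \\
&\quad - \Gamma_{loss}(g^{n-1},f^{n+1}) - \Gamma_{loss}(f^{n-1},g^n),
\end{aligned}
\end{equation*}
with $g^{n+1}|_{t=0}=0$. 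I would then redo exactly the energy estimate in the proof of Lemma~\ref{lem: Approximation functions}: apply $\Delta_j$, pair against $2^{2js}\langle v\rangle^{2\beta}\Delta_j g^{n+1}$, integrate in $x$ and on $[0,t]$, take $\sup_v$, square-root, and sum against $2^{js}$. The $K$-term on the right is controlled exactly as in \eqref{ineq: Trilinear Estimate of K}, while the difference of collision terms is controlled by the trilinear estimates \eqref{ineq: Trilinear estimate of Gamma} applied to the triples $(g^{n-1},f^n,g^{n+1})$, $(f^{n-1},g^{n-1},g^{n+1})$, $(g^{n-1},f^{n+1},g^{n+1})$, $(f^{n-1},g^n,g^{n+1})$. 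Combined with the uniform bound $\mathcal{F}_T(f^n) \le 2M_0$, this should yield
\begin{equation*}
\mathcal{F}_T(g^{n+1}) \le \bigl(o(T)+C\sqrt{M_0}\bigr)\bigl(\mathcal{F}_T(g^n)+\mathcal{F}_T(g^{n-1})\bigr),
\end{equation*}
for all $T\in[0,T_*)$; after shrinking $T_*$ and $M_0$ the prefactor becomes less than $1/2$, so $\{f^n\}$ is Cauchy in the Banach space equipped with the norm $\mathcal{F}_T$. I would take the limit $f$ in this norm; the bound $\mathcal{F}_T(f)\le 2M_0$ is inherited, and the trilinear estimates allow me to pass to the limit in the mild form of \eqref{eqn: The Approximation Scheme}, giving a mild solution of \eqref{eqn: BE near M}. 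Uniqueness follows by running the same contraction estimate on the difference of two solutions with the same initial data. The continuity of $\mathcal{F}_T(f)$ in $T$ follows from monotone and dominated convergence applied to the defining suprema and time integrals.

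For non-negativity I would exploit that the iterative equation for $F^{n+1}$ is linear with a non-negative absorption coefficient $\int|v-v_*|^\gamma b_0(\theta)F^n_*\,dv_*d\omega$ and a non-negative source $\int|v-v_*|^\gamma b_0(\theta)F^{\prime n}_*F^{\prime n}\,dv_*d\omega$ whenever $F^n\ge 0$. Writing $F^{n+1}$ explicitly along characteristics by Duhamel's formula yields $F^{n+1}\ge 0$ inductively, starting from $F^0=M\ge 0$ and $F_0\ge 0$. Passing to the pointwise limit preserves the sign, so $F = M + M^{1/2}f\ge 0$.

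The main obstacle will be in the Cauchy step: the singular factor $|v-v_*|^\gamma$ with $\gamma<0$ forces a loss of the velocity weight $\langle v\rangle^{\gamma/2}$ in each bilinear estimate, and as in the proof of Lemma~\ref{lem: Approximation functions} this loss must be paid for by the dissipation contribution $\mathcal{D}_T$ rather than $\mathcal{E}_T$. Keeping careful bookkeeping of which factor in each trilinear form is placed in $\mathcal{E}_T$, which in $\mathcal{D}_T$, and which uses the embedding $\tilde{L}^\infty_\beta(B^s_{2,1})\hookrightarrow L^\infty_\beta L^\infty_x$ from $s\ge d/2$, is the delicate point; it is exactly the analogue of the juggling performed to deduce \eqref{ineq: Trilinear estimate of Gamma} from \eqref{ineq: The Integrands}, so I expect no new structural difficulty beyond the bookkeeping.
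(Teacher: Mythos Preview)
Your proposal is correct and follows the same template as the paper: the energy identity of Lemma~\ref{lem: Approximation functions} applied to differences, with the trilinear bounds \eqref{ineq: The Integrands}--\eqref{ineq: Trilinear estimate of Gamma} doing all the work. In fact you are more complete than the paper on existence: the paper's proof of Theorem~\ref{ap.thm} only writes out uniqueness, continuity, and non-negativity, and never explicitly passes from the uniform bound \eqref{ad.ap.l1} to convergence of $\{f^n\}$; your Cauchy-sequence argument for $g^n=f^{n+1}-f^n$ is exactly the missing step, and your recursion $\mathcal{F}_T(g^{n+1})\le (o(T)+C\sqrt{M_0})(\mathcal{F}_T(g^n)+\mathcal{F}_T(g^{n-1}))$ is the right output of that computation.

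The one place where the paper is more careful than your sketch is the continuity of $T\mapsto\mathcal{F}_T(f)$. ``Monotone and dominated convergence'' is not quite enough: $\mathcal{D}_T(f)$ is an $\ell^1$-sum in $j$ of square roots of time integrals, and to show $\overline{\sum}\bigl(\int_{t_1}^{t_2}\Vert\Delta_j f\Vert_{L^\infty_{\beta+\gamma/2}L^2_x}^2\,ds\bigr)^{1/2}\to 0$ one needs a tail argument (choose $N$ so that $\sum_{j>N}$ is uniformly small, then use absolute continuity of the finitely many integrals $j\le N$), since dominated convergence does not directly commute with the non-linear operation $\sum_j(\cdot)^{1/2}$. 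The continuity of $\mathcal{E}_T(f)$ in turn is deduced from this via the energy identity, not from abstract measure theory. This is a bookkeeping point, but since continuity of $\mathcal{F}_T$ is what drives the continuation argument in Corollary~\ref{cor: globa a priori estimate}, it is worth doing properly. Your non-negativity argument via Duhamel along characteristics is the standard one and is what the reference \cite{DHWY} cited by the paper actually does.
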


\begin{proof}
First we consider the uniqueness. 
Suppose that $f$ and $g$ are two solutions to the Cauchy problem \eqref{eqn: BE near M}
with the same initial data $f|_{t=0}=f_0=g|_{t=0}$. Taking difference of the equations for $f$ and $g$ gives
\begin{align*}
(\partial_t +v\cdot \nabla_x)(f-g) +\nu  (f-g) =\Gamma(f-g,f) +\Gamma(g, f-g) +K(f-g).
\end{align*}
Here, for brevity we did not directly make use of the integral form of equations, as the solution can be explained to be a strong solution in the sense of distributions, see \cite{DL}.
The same procedure carried out in the proof of Lemma \ref{lem: Approximation functions} shows
\begin{align*}
\mathcal{F}_T(f-g)&\le C \sqrt{\mathcal{E}_T(f)+\mathcal{E}_T(g)}\mathcal{D}_T(f-g) \\
&+ C\sqrt{\mathcal{E}_T(f-g)}\sqrt{\mathcal{D}_T(f)+\mathcal{D}_T(g)}\sqrt{\mathcal{D}_T(f-g)} + o(T)\mathcal{E}_T(f-g)\\
&\le C\left(\sqrt{M_0}+o(T)\right)\mathcal{F}_T(f-g).
\end{align*}
Thus, one has $f-g\equiv 0$ by taking $M_0>0$ and $T_*>0$ to be further suitably small, if necessary. This proves the uniqueness. 

Next, we  show the continuity of $\mathcal{F}_T(f)$ in $T$. Note that the continuity of $\overline{\sum} \Vert \Delta_j f(t)\Vert_{L^\infty_\beta L^2_x}$ is a consequence of the following fact that 
\begin{align}\label{ad.ap.p2}
\lim_{t_2\rightarrow t_1} \overline{\sum} \Big( \int^{t_2}_{t_1} \Vert \Delta_j f \Vert_{L^\infty_{\beta+\gamma/2}L^2_x}^2 dt\Big)^{1/2}=0,
\end{align}
for any $0\le t_1, t_2 <T_*$. Indeed, we may assume $t_1<t_2$ without loss of generality. Starting from \eqref{eqn: Resultant 1} again, one can show that
\begin{align*}
\vert \mathcal{E}_{t_2}(f)-\mathcal{E}_{t_1}(f)\vert \le C(\sqrt{M_0}+1)\overline{\sum} \Big( \int^{t_2}_{t_1} \Vert \Delta_j f \Vert_{L^\infty_{\beta+\gamma/2}L^2_x}^2 dt\Big)^{1/2}.
\end{align*}
Thus, it remains to show \eqref{ad.ap.p2}. 
Take $\varepsilon>0$. In terms of the finiteness of $\mathcal{D}_T(f)$, there is an integer $N$ large enough such that
\begin{align*}
\sum_{j\ge N+1}2^{js} \Big( \int^{t_2}_{t_1} \Vert \Delta_j f \Vert_{L^\infty_{\beta+\gamma/2}L^2_x}^2 dt\Big)^{1/2} \le \sum_{j\ge N+1}2^{js} \Big( \int^T_0 \Vert \Delta_j f \Vert_{L^\infty_{\beta+\gamma/2}L^2_x}^2 dt\Big)^{1/2}< \frac{\varepsilon}{2}.
\end{align*}
Also, since $\sum_{-1\le j\le N}$ is a finite sum, there is $\delta>0$ such that if $\vert t_2-t_1\vert <\delta$, then it holds that
\begin{align*}
\sum_{-1\le j\le N}2^{js} \Big( \int^{t_2}_{t_1} \Vert \Delta_j f \Vert_{L^\infty_{\beta+\gamma/2}L^2_x}^2 dt\Big)^{1/2} <\frac{\varepsilon}{2}.
\end{align*}
Therefore, whenever $\vert t_2-t_1\vert <\delta$, it holds that 
$$
\overline{\sum} \Big( \int^{t_2}_{t_1} \Vert \Delta_j f \Vert_{L^\infty_{\beta+\gamma/2}L^2_x}^2 dt\Big)^{1/2}\leq \left(\sum_{j\ge N+1} +\sum_{-1\le j\le N}\right) \{\cdots\}<\frac{\varepsilon}{2}+\frac{\varepsilon}{2}=\varepsilon.
$$ 
This then proves \eqref{ad.ap.p2}. Furthermore, for $t_\alpha :=\max \{(1+t_1)^\alpha, (1+t_2)^\alpha\}$, one has 
\begin{align*}
0\le \mathcal{F}_{t_2}(f)-\mathcal{F}_{t_1}(f)& = \Big(\mathcal{E}_{t_2}(f)-\mathcal{E}_{t_1}(f)\Big)+\Big(\mathcal{D}_{t_2}(f)-\mathcal{D}_{t_1}(f)\Big),\\
\mathcal{E}_{t_2}(f)-\mathcal{E}_{t_1}(f) &\le \sup_{t_1\le t\le t_2} (1+t)^{\alpha} \overline{\sum}\Vert \Delta_j f(t)\Vert_{L^\infty_\beta L^2_x} \le t_\alpha \sup_{t_1\le t\le t_2}\overline{\sum}\Vert \Delta_j f(t)\Vert_{L^\infty_\beta L^2_x}\rightarrow 0,\\
\mathcal{D}_{t_2}(f)-\mathcal{D}_{t_1}(f)&\le t_\alpha \overline{\sum} \Big( \int^{t_2}_0 \Vert \Delta_j f \Vert_{L^\infty_{\beta+\gamma/2}L^2_x}^2 dt\Big)^{1/2} -t_\alpha\overline{\sum}\Big( \int^{t_1}_0 \Vert \Delta_j f \Vert_{L^\infty_{\beta+\gamma/2}L^2_x}^2 dt\Big)^{1/2}\\
&\le t_\alpha \overline{\sum} \Big( \int^{t_2}_{t_1} \Vert \Delta_j f \Vert_{L^\infty_{\beta+\gamma/2}L^2_x}^2 dt\Big)^{1/2} \rightarrow 0
\end{align*}
as $t_2\rightarrow t_1$. Thus, the continuity of $\mathcal{F}_T(f)$ in $T$ is proved.

For the non-negativity of solutions, see  \cite[pp.416--417]{DHWY}, for instance, and details are omitted for brevity. This completes the proof of Theorem \ref{ap.thm}. 
\end{proof}

\subsection*{Acknowledgement}
The first author is partially supported by the General Research Fund (Project No.\ 14302716) from RGC  of Hong Kong. The second author is partially supported by JSPS Kakenhi Grant (No.\ 16J03963). Also, he shows his deep gratitude to Professor Renjun Duan for his kind hospitality when he visited the Chinese University of Hong Kong from July to August in 2017.

\end{document}